\newtheorem{thm}{Theorem}[section]
\newtheorem{lemma}[thm]{Lemma}
\newtheorem{prop}[thm]{Proposition}
\theoremstyle{definition}
\newtheorem{definition}[thm]{Definition}
\newtheorem{remark}[thm]{Remark}
\newcommand\ddd{\mathrm{d}}
\newcommand\supp{\mathrm{supp}}
\newcommand\holder{Hölder's~}
\newcommand\schrodinger{Schrödinger~}
\newcommand\rhu{\rightharpoonup}
\newcommand\bR{\mathbb{R}}
\newcommand\bN{\mathbb{N}}
\newcommand\bZ{\mathbb{Z}}
\def \l {\left}
\def \r {\right}
\begin{document}
	
\title[Odd profile decomposition]{On profile decomposition for Airy type equation}

\author{Boning Di}
\author{Chenjie Fan}
\author{Dunyan Yan}

\date{}
\thanks{Di was supported by the China Postdoctoral Science Foundation [Grant No. GZB20230812]. Fan was supported by the National Key R \& D Program of China [Grant No. 2021YFA1000800], the CAS Project for Young Scientists in Basic Research [Grant No. YSBR-031] and the National Natural Science Foundation of China [Grant No. 11688101]. Yan was supported by the National Natural Science Foundation of China [Grant Nos. 12071052 \& 12271501].}

\subjclass[2020]{Primary 42A38; Secondary 35B38, 35Q53.}
\keywords{Profile decomposition, two-profile phenomenon, extremal function, Strichartz inequality.}
\begin{abstract}
	We study the linear profile decomposition for the Airy type equation, where the associated Strichartz inequality corresponds to the Fourier extension inequality on the odd curve $\xi^{\ell}$. We also investigate an inhomogeneous case, modeled by the odd curve $\xi^3+\xi^5$ case. We note that, as observed by Frank and Sabin [Math. Ann., 2018], there is a two-profile phenomenon in the profile decomposition associated with odd curves.
\end{abstract}

\maketitle

\tableofcontents

\section{Introduction}
\subsection{The main statements}
Given an odd integer $\ell>1$ and the following homogeneous odd curve
\[L_{\ell}:=\l\{(\xi,\xi^{\ell}): \xi\in\bR\r\},\]
the associated Fourier extension estimate of \cite[Theorem 2.1]{KPV1991} states that
\begin{equation}\label{E:Odd Strichartz}
	\l\|[E_{\ell}]f\r\|_{L_t^6L_x^6(\bR^{2})} \leq \mathbf{M}_{\ell,q} \|f\|_{L_x^2(\bR)},
\end{equation}
where the extension operator $[E_{\ell}]$ is defined as
\[[E_{\ell}]f(x):=\frac{1}{2\pi} \int_{\bR} |\xi|^{\frac{\ell-2}{6}} e^{ix\xi+it\xi^{\ell}} \widehat{f}(\xi) \ddd \xi.\]
This Fourier extension type estimate is also known as Strichartz type estimate. For convenience, we denote the Strichartz norms $\|F\|_{L_{t,x}^{p}}:= \|F\|_{L_t^p L_x^p}$.

This article aims to establish the profile decomposition associated with the Fourier extension inequality \eqref{E:Odd Strichartz}, and then study the extremal sequences for this inequality. To investigate the scaling-broken situation, this paper is also devoted to an inhomogeneous odd curve $\xi^3+\xi^5$ case.

One can observe that both sides of \eqref{E:Odd Strichartz} are invariant under some symmetries, and we will particularly investigate the noncompact symmetries below. The \textit{symmetries} $[g_n]$ for the Strichartz inequality \eqref{E:Odd Strichartz} are scaling and time-space translations as follows
\[[g_n]f(x):=[e^{it_n \nabla^{\ell}}] \l((h_n)^{-1/2} f\l(\frac{x-x_n}{h_n}\r)\r), \quad (h_n,x_n,t_n)\in \bR_{+}\times \bR\times\bR,\]
where the operator $[e^{it \nabla^{\ell}}]$ is defined by
\[[e^{it \nabla^{\ell}}]f(x):=\frac{1}{2\pi} \int_{\bR} e^{ix\xi+it\xi^{\ell}} \widehat{f}(\xi) \ddd \xi;\]
and the \textit{associated group} $G$ is the following
\begin{equation} \label{E: Associated group}
  G:=\Big\{[g_n]: (h_n,x_n,t_n)\in \bR_{+}\times \bR\times\bR\Big\}.  
\end{equation}
To state our profile decomposition result, for parameters
\[(h_n^j, x_n^j, \xi_n^j, t_n^j) \in \bR_{+} \times \bR \times \bR \times \bR,\]
we need to introduce the profile operator $[T_n^j]$ as follows
\begin{equation} \label{E:Bilinear operator}
	[T_n^j](\phi_1, \phi_2) (x):= [T_{n,+}^{j}] \phi_1 + [T_{n,-}^{j}] \phi_2,
\end{equation}
where the operators $[T_{n,+}^{j}]$ and $[T_{n,-}^{j}]$ are defined as
\begin{equation*}
	[T_{n,\pm}^{j}] \phi:= [e^{it_n^j}\nabla^{\ell}] \l[(h_n^j)^{-\frac{1}{2}} e^{\pm i(x-x_n^j)\xi_n^j} \phi \l(\frac{x-x_n^j}{h_n^j}\r)\r].
\end{equation*}
\begin{definition}[Orthogonal parameters: odd curve] \label{D:Odd orthogonality}
	We say the parameters $(h_n^j, x_n^j, \xi_n^j, t_n^j)$ are \textit{odd pairwise orthogonal} if for arbitrary fixed $j\neq k$, these parameters satisfy one of the following:
	\begin{itemize}
		\item $(h_n^j,\xi_n^j) \neq (h_n^k,\xi_n^k)$ with
		\[\lim_{n\to \infty}\l(\frac{h_n^j}{h_n^k} + \frac{h_n^k}{h_n^j} + (h_n^j+h_n^k) |\xi_n^j-\xi_n^k| \r) = \infty\]
		and
		\[\lim_{n\to \infty}\l(\frac{h_n^j}{h_n^k} + \frac{h_n^k}{h_n^j} + (h_n^j+h_n^k) |\xi_n^j+\xi_n^k| \r) = \infty;\]
		\item $(h_n^j,\xi_n^j) \equiv (h_n^k,\xi_n^k) \equiv (h_n, \xi_n)$ with $h_n\xi_n \to +\infty$ and
		\[\lim_{n\to\infty} \l(\l|\frac{x_n^j-x_n^k}{h_n} - \frac{(t_n^j-t_n^k)\alpha (h_n \xi_n)^{\alpha-1}}{(h_n)^{\alpha}}\r| + \l|\frac{(t_n^j-t_n^k) (h_n \xi_n)^{\alpha-2}}{(h_n)^{\alpha}}\r| \r)= \infty;\]
		\item $(h_n^j,\xi_n^j) \equiv (h_n^k,\xi_n^k) \equiv (h_n, \xi_n)$ with $h_n\xi_n \to -\infty$ and
		\[\lim_{n\to\infty} \l(\l|\frac{x_n^j-x_n^k}{h_n} - \frac{(t_n^j-t_n^k)\alpha (h_n \xi_n)^{\alpha-1}}{(h_n)^{\alpha}}\r| + \l|\frac{(t_n^j-t_n^k) (h_n \xi_n)^{\alpha-2}}{(h_n)^{\alpha}}\r| \r)= \infty;\]
		\item $(h_n^j,\xi_n^j) \equiv (h_n^k,\xi_n^k) \equiv (h_n, \xi_n)$ with $\xi_n \equiv 0$ and
		\[\lim_{n\to\infty} \l(\l|\frac{x_n^j-x_n^k}{h_n} \r| + \l|\frac{t_n^j-t_n^k}{(h_n)^{\alpha}}\r| \r)= \infty.\]
	\end{itemize}
\end{definition}

\begin{thm}[Profile decomposition: odd curve] \label{T:Profile decomposition}
	Let $(f_n)$ be a bounded sequence in $L^2(\bR)$. Then, up to subsequences, there exist a sequence of operators $([T_n^j])$ defined by \eqref{E:Bilinear operator} with odd pairwise orthogonal parameters $(h_n^j, x_n^j, \xi_n^j, t_n^j)$ and a sequence of functions $(\phi_{+}^j, \phi_{-}^j) \subset L^2(\bR) \times L^2(\bR)$ such that for every integer $J\in \bN$, we have the profile decomposition
	\begin{equation}\label{T:Profile decomposition-1}
		f_n=\sum_{j=1}^{J} [T_n^j](\phi_{+}^j, \phi_{-}^j) +\omega_n^{J},
	\end{equation}
	where this decomposition possesses the following properties: firstly, the remainder term $\omega_n^{J}$ has a vanishing Strichartz norm
	\begin{equation} \label{T:Profile decomposition-2}
		\lim_{J\to\infty}\limsup_{n\to\infty}\l\|[E_{\ell}] \omega_n^{J} \r\|_{L_{t,x}^{6}}=0;
	\end{equation}
	secondly, for each $j\leq J$, there holds the following $L_x^2$ weak convergence
	\begin{equation} \label{T:Profile decomposition-3}
		[T_{n,+}^j]^{-1} \omega_n^J \rightharpoonup 0 \quad \text{and} \quad [T_{n,-}^j]^{-1} \omega_n^J \rightharpoonup 0 \quad \text{as} \quad n\to \infty.
	\end{equation}
\end{thm}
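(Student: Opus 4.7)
The plan is to adapt the Bahouri--Gérard inductive extraction scheme to the odd curve setting. The structural novelty is that each extraction step must produce a \emph{pair} of profiles $(\phi_+^j, \phi_-^j)$, because the odd symmetry of $\xi^{\ell}$ forces each spatio-temporal bubble to be fed simultaneously by stationary-phase contributions at $+\xi_n^j$ and at $-\xi_n^j$ on the curve, which cannot be decoupled by a single element of the group $G$. This is precisely why Definition \ref{D:Odd orthogonality} requires \emph{both} $(h_n^j+h_n^k)|\xi_n^j - \xi_n^k|\to\infty$ and $(h_n^j+h_n^k)|\xi_n^j + \xi_n^k|\to\infty$ in the generic regime, and splits $h_n\xi_n \to +\infty$ from $h_n\xi_n \to -\infty$ in the concentrating-frequency regimes.

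The first technical ingredient I would establish is an \emph{inverse Strichartz inequality}: if $(f_n)$ is bounded in $L^2$ with $\limsup_n \|[E_{\ell}] f_n\|_{L_{t,x}^{6}} \ge \varepsilon$, then along a subsequence there exist parameters $(h_n, x_n, \xi_n, t_n)$ and profiles $\phi_\pm \in L^2(\bR)$ satisfying
\[[T_{n,+}]^{-1} f_n \rightharpoonup \phi_+, \qquad [T_{n,-}]^{-1} f_n \rightharpoonup \phi_-,\]
with $\|\phi_+\|_{L^2}^2+\|\phi_-\|_{L^2}^2 \gtrsim \varepsilon^{C} (\sup_n\|f_n\|_{L^2})^{-C'}$ for constants $C,C'$ depending only on $\ell$. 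To prove this I would first establish a refined Strichartz inequality on dyadic frequency intervals --- combining a Whitney-type bilinear decomposition of the Fourier support with interpolation, in the spirit of Tao--Vargas--Vega --- and then extract a critical dyadic interval $I_n$ carrying a definite share of the $L^6$ mass. Rescaling $I_n$ to unit length and reading off the resulting position, time, and modulation parameters, the weak $L^2$ limits on the $+$ and $-$ halves of the rescaled interval become $\phi_+$ and $\phi_-$ respectively.

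Given this, the decomposition is built inductively: set $\omega_n^0:=f_n$; at stage $J$, if $\limsup_n \|[E_{\ell}]\omega_n^{J-1}\|_{L_{t,x}^{6}}>0$, apply the inverse Strichartz inequality to $\omega_n^{J-1}$ to extract parameters $(h_n^J, x_n^J, \xi_n^J, t_n^J)$ and profiles $(\phi_+^J, \phi_-^J)$, and set $\omega_n^J := \omega_n^{J-1} - [T_n^J](\phi_+^J, \phi_-^J)$. The weak-limit definition of the profiles yields \eqref{T:Profile decomposition-3} for $j=J$; a straightforward induction transfers it to all $j\le J$, and a failure of odd-orthogonality of the new parameters against any earlier $k<J$ would produce a nonzero weak limit inside the $k$-th bubble's coordinates, contradicting the prior weak-zero condition and enforcing odd-orthogonality \emph{a posteriori}. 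The almost-Pythagorean identity
\[\|f_n\|_{L^2}^2 = \sum_{j=1}^{J}\bigl(\|\phi_+^j\|_{L^2}^2+\|\phi_-^j\|_{L^2}^2\bigr) + \|\omega_n^J\|_{L^2}^2 + o_n(1),\]
combined with the inverse Strichartz lower bound, then forces the extracted masses to decay and yields \eqref{T:Profile decomposition-2}.

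The main obstacle I anticipate is establishing the cross-term almost-orthogonality underlying the Pythagorean identity above: for every $j\ne k$ and every $\sigma,\tau\in\{+,-\}$, one must show $\langle [T_{n,\sigma}^j]\phi_\sigma^j, [T_{n,\tau}^k]\phi_\tau^k\rangle \to 0$ under the assumptions of Definition \ref{D:Odd orthogonality}. After a change of variable this reduces to an oscillatory integral whose phase involves $h_n^j/h_n^k$, $\sigma\xi_n^j - \tau\xi_n^k$, $x_n^j - x_n^k$, and $t_n^j - t_n^k$. Verifying that all four sign combinations $(\sigma,\tau)$ are killed in each of the four orthogonality regimes --- in particular distinguishing $h_n\xi_n\to+\infty$ from $h_n\xi_n\to-\infty$, where the positive and negative stationary points of the curve $\xi^{\ell}$ play genuinely different roles in locating the phase critical points --- is the delicate stationary/non-stationary phase bookkeeping that justifies the precise form of Definition \ref{D:Odd orthogonality}.
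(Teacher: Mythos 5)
Your route is genuinely different from the paper's. You propose a direct Bahouri--G\'erard scheme built on an inverse Strichartz inequality for the odd extension operator $[E_{\ell}]$ itself, extracting a pair $(\phi_+^j,\phi_-^j)$ at each stage. The paper instead never proves any refined/inverse Strichartz estimate for the odd curve: it splits $f_n=[P_+]f_n+[P_-]f_n$, observes that on each frequency half-line $e^{it\nabla^{\ell}}$ coincides with the even propagator $e^{it|\nabla|^{\ell}}$, imports the known even-curve profile decomposition of \cite{DY2023} for each half separately, and then \emph{regroups}: using the dislocation property (Remark \ref{R:Profile decomposition-even}) it shows each positive-frequency bubble fails to be odd-orthogonal to at most one negative-frequency bubble, and fuses each such pair into a single two-profile operator $[T_n^j](\phi_+^j,\phi_-^j)$. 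The divide-and-regroup reduction buys you all the hard harmonic analysis for free; your approach, if completed, would be more self-contained but must redo that analysis in a setting where it is genuinely harder.

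The concrete gap is in your first ingredient. A Whitney-type bilinear decomposition for $[E_{\ell}]$ over all of $\bR$ fails: since $\ell$ is odd, $\frac{d}{d\xi}\xi^{\ell}=\ell\xi^{\ell-1}$ is an \emph{even} function, so a dyadic interval $\tau\subset\bR_+$ and its antipode $-\tau$ are separated in frequency yet carry identical group velocities, and the bilinear (transversality) estimate for the pair $(\tau,-\tau)$ has no gain. This is exactly the Frank--Sabin obstruction \eqref{eq: wishlist}--\eqref{eq: onemoresymmetry}, and it is why the paper's refined Strichartz machinery (Lemma \ref{L:Geometric quasi-orthogonal}, Lemma \ref{L:Bilinear Strichartz-3,5}, Proposition \ref{P:Refined Strichartz}) is only ever stated for functions with Fourier support in a single half-line. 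Your proposal does not say how the refined/inverse Strichartz inequality for $[E_{\ell}]$ handles antipodal pairs, and your description of where the two profiles come from --- ``the weak limits on the $+$ and $-$ halves of the rescaled critical interval'' --- is not correct: $\phi_+$ and $\phi_-$ live near the critical interval and near its \emph{antipode} $-I_n$ (two widely separated intervals when $h_n\xi_n\to\infty$), not on the two halves of one interval. To close the gap you would need to run the refined Strichartz argument separately on $\bR_{\pm}$ (or otherwise quarantine the antipodal pairs), locate a critical interval $I_n$ in one half-line, and then extract the weak limits in \emph{both} frames $[T_{n,+}]^{-1}$ and $[T_{n,-}]^{-1}$ simultaneously --- the second extraction being forced not by the $L^2$ lower bound (which only improves) but by the need to keep distinct bubbles odd-orthogonal in the sense of Definition \ref{D:Odd orthogonality}, i.e.\ to prevent a later extraction at $-\xi_n^j$ from producing a bubble that does not $L^6$-decouple from the $j$-th one. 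Once that is in place, your inductive bookkeeping, the a posteriori orthogonality via the dislocation argument, and the mass-decay termination are standard and sound.
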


\begin{remark}[Odd-orthogonality] \label{R:Profile decomposition-Strichartz orthogonality}
	One can compare the odd-orthogonality in Definition \ref{D:Odd orthogonality} and the even-orthogonality in Definition \ref{D:Even orthogonality}. As stated in Remark \ref{R:Profile decomposition-even-1}, this odd-orthogonality of parameters is essentially equivalent to the following fact: the operators $[T_n^j]$ are orthogonal in the sense that for arbitrary fixed $j\neq k$ there holds
	\begin{equation}\label{T:Profile decomposition-4}
		[T_{n,+}^k]^{-1} [T_{n,+}^j]\rightharpoonup 0 \quad \text{and} \quad [T_{n,+}^k]^{-1} [T_{n,-}^j]\rightharpoonup 0 \quad \text{as} \quad n\to \infty
	\end{equation}
	in the weak operator topology of $\mathcal{B}(L^2)$. Hence by \eqref{T:Profile decomposition-3}, for each $J\in\bN$, there holds the $L^2$-limit orthogonality
	\begin{equation}\label{T:Profile decomposition-5}
		\lim_{n\to\infty}\l[\|f_n\|_{L_x^2}^2- \l(\sum_{j=1}^{J} \l\|\phi_{+}^j \r\|_{L_x^2}^2 + \l\|\phi_{-}^j \r\|_{L_x^2}^2\r)-\|\omega_n^{J}\|_{L_x^2}^2\r]=0.
	\end{equation}
	Moreover, the odd-orthogonality of parameters also implies that
	\begin{equation}\label{T:Profile decomposition-6}
		\lim_{n\to\infty} \l\|[E_{\ell}] [T_n^j](\phi_{+}^j,\phi_{-}^j) \cdot [E_{\ell}] [T_n^k] (\phi_{+}^k, \phi_{-}^k) \r\|_{L_{t,x}^3}=0 \quad \forall j\neq k,
	\end{equation}
	which further implies that for each $J\in\bN$ there holds the following Strichartz-limit orthogonality
	\begin{equation}\label{T:Profile decomposition-7}
        \limsup_{n\to\infty}\l(\l\|\sum_{j=1}^J[E_{\ell}] [T_n^j](\phi_{+}^j, \phi_{-}^j)\r\|_{L_{t,x}^6}^{6} -\sum_{j=1}^J\l\|[E_{\ell}] [T_n^j](\phi_{+}^j, \phi_{-}^j) \r\|_{L_{t,x}^{6}}^{6}\r)=0.
	\end{equation}
    Finally, for each fixed $j$, we can further assume the parameters $(h_n^j,\xi_n^j)$ satisfy either $h_n^j\xi_n^j \to +\infty$, $h_n^j\xi_n^j \to -\infty$ or $\xi_n^j \equiv 0$. 
\end{remark}

\begin{remark}[Real version] \label{R:Profile decomposition-real version}
	For the real-valued sequence $(f_n)$, one can obtain a similar profile decomposition consequence by taking the real parts on both sides of \eqref{T:Profile decomposition-1}. Moreover, by the identity
	\[\mathrm{Re} [T_n^j](\phi_{+}^j, \phi_{-}^j) = [T_{n,+}^j] \l(\frac{\phi_{+}^j + \overline{\phi}_{-}^j}{2}\r) + [T_{n,-}^j] \l(\frac{\overline{\phi}_{+}^j + \phi_{-}^j}{2}\r),\]
	one can write the real version profile decomposition as
	\[f_n=\sum_{j=1}^{J} [T_n^j](\phi^j, \overline{\phi}^j) +\omega_n^{J}, \quad \phi^j:= \frac{\phi_{+}^j + \overline{\phi}_{-}^j}{2}.\]
    Hence, for the real version, there is essentially only one profile $\phi^j$ instead of two profiles $(\phi_{+}^j,\phi_{-}^j)$.
\end{remark}

One can see that there is a two-profile phenomenon $(\phi_{+}^j, \phi_{-}^j)$ in our profile decomposition result Theorem \ref{T:Profile decomposition}. For these two reduced profiles with parameters $|h_n\xi_n|\to\infty$, the corresponding two-profile Strichartz norm asymptotic behavior has been studied by Frank and Sabin \cite[Lemma 2.4 \& Proof of Theorem 2]{FS2018}, where they realize a homogenization phenomenon for the case $\ell=3$. Their consequence can be parallelly extended to the general odd integer case $\ell \geq 3$, which we state as follows.
\begin{prop}[Two-profile norm asymptotic \cite{FS2018}] \label{P:Two-profile limit-odd}
	For any function $f\in L^2(\bR)$, denoting
	\[\widetilde{f}_N(x):= e^{ix N} f(x) + e^{-ix N} \overline{f}(x),\]
	then there holds
	\[\lim_{N\to\infty} \l\|[E_{\ell}]\widetilde{f}_N \r\|_{L_{t,x}^6} = \l[\frac{40}{\ell(\ell-1)}\r]^{1/6} \l\|[e^{it\Delta}]f\r\|_{L_{t,x}^6}.\]
	Moreover, for arbitrary two functions $g_1$ and $g_2$ in $L^2(\bR)$, denoting
	\[\widetilde{g}_N(x):= e^{ix N} g_1(x) + e^{-ix N} \overline{g}_2(x),\]
	then there holds
	\[\lim_{N\to\infty} \l\|[E_{\ell}]\widetilde{g}_N \r\|_{L_{t,x}^6} \leq \l[\frac{5}{\ell(\ell-1)}\r]^{1/6} \l(\l\|[e^{it\Delta}]g_1\r\|_{L_{t,x}^6}^2 + \l\|[e^{it\Delta}]g_2\r\|_{L_{t,x}^6}^2 \r)^{1/2},\]
	and the equality holds if and only if
	\[\l|[e^{it\Delta}]g_1(x)\r|= \l|[e^{it\Delta}]g_2(x)\r| ,\quad \text{a.e.} \;\; (t,x) \in \bR^2.\]
\end{prop}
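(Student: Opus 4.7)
The plan is to follow the homogenization strategy of Frank--Sabin: decouple the two modulations in $\widetilde{g}_N$, reduce the Airy-type extension to the Schrödinger extension by Taylor-expanding the phase $(\eta+N)^\ell$ around the large frequency $N$, then expand $|\cdot|^6$ to isolate the non-oscillating contribution. Writing $\widehat{e^{ixN}g}(\xi) = \widehat{g}(\xi-N)$ and setting $\eta = \xi-N$,
\[
[E_\ell](e^{ixN}g)(t,x) = \frac{1}{2\pi}\int |\eta+N|^{\frac{\ell-2}{6}} e^{ix(\eta+N)+it(\eta+N)^\ell}\widehat{g}(\eta)\,\ddd\eta.
\]
Expanding $(\eta+N)^\ell = N^\ell + \ell N^{\ell-1}\eta + \binom{\ell}{2}N^{\ell-2}\eta^2 + O(N^{\ell-3}\eta^3)$, I absorb the linear term by the translation $y := x + \ell t N^{\ell-1}$, rescale $s := \frac{\ell(\ell-1)}{2}tN^{\ell-2}$ so that the quadratic term becomes the Schrödinger phase $s\eta^2$, and keep $\theta_N := Nx + tN^\ell = Ny - \frac{2sN^2}{\ell}$ as the residual real phase. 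For Schwartz $g$ with Fourier support in a fixed compact set, the cubic-and-higher remainders are uniformly $O(1/N)$ on compact $(s,y)$-sets and $|\eta+N|^{(\ell-2)/6} = N^{(\ell-2)/6}(1+O(1/N))$, giving the pointwise asymptotic
\[
[E_\ell](e^{ixN}g)(t,x) = N^{\frac{\ell-2}{6}} e^{i\theta_N}[e^{is\Delta}]g(y) + o(1).
\]
Since $\ell$ is odd, the substitution $\xi \mapsto -\xi-N$ in the companion piece yields $[E_\ell](e^{-ixN}\overline{g}_2) = \overline{[E_\ell](e^{ixN}g_2)}$, whence
\[
[E_\ell]\widetilde{g}_N(t,x) = N^{\frac{\ell-2}{6}}\bigl(e^{i\theta_N}u_1(s,y) + e^{-i\theta_N}\overline{u}_2(s,y)\bigr) + o(1), \quad u_j := [e^{is\Delta}]g_j.
\]

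For the sixth power I use $|e^{i\theta}u_1 + e^{-i\theta}\overline{u}_2|^2 = |u_1|^2 + |u_2|^2 + 2\,\mathrm{Re}(e^{2i\theta}u_1u_2)$ and cube; the non-oscillating part of the result is
$(|u_1|^2+|u_2|^2)^3 + 6|u_1|^2|u_2|^2(|u_1|^2+|u_2|^2) = |u_1|^6 + 9|u_1|^4|u_2|^2 + 9|u_1|^2|u_2|^4 + |u_2|^6$,
while the remaining terms carry factors $e^{ik\theta_N}$ with $k \in \{\pm 2,\pm 4,\pm 6\}$. Since $\theta_N = Ny - \frac{2sN^2}{\ell}$ oscillates to infinity in both new coordinates, Riemann--Lebesgue annihilates the oscillating contributions on the dense class of Schwartz profiles. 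The Jacobian $\ddd t\,\ddd x = \frac{2}{\ell(\ell-1)N^{\ell-2}}\ddd s\,\ddd y$ exactly cancels the factor $N^{\ell-2}$ from raising the amplitude to the sixth power, yielding
\[
\lim_{N\to\infty}\|[E_\ell]\widetilde{g}_N\|_{L^6_{t,x}}^6 = \frac{2}{\ell(\ell-1)}\int\bigl(|u_1|^6 + 9|u_1|^4|u_2|^2 + 9|u_1|^2|u_2|^4 + |u_2|^6\bigr)\ddd s\,\ddd y.
\]
The specialization $g_1 = g_2 = f$ collapses the integrand to $20|u|^6$ and proves the first assertion with constant $\tfrac{40}{\ell(\ell-1)}$.

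For the upper bound I use the algebraic identity $a^3 + 9a^2b + 9ab^2 + b^3 = (a+b)^3 + 6ab(a+b)$ with $a = |u_1|^2$, $b = |u_2|^2$ together with the AM--GM bound $4ab \leq (a+b)^2$ to majorize the integrand pointwise by $\tfrac{5}{2}(|u_1|^2+|u_2|^2)^3$. The $L^3$-triangle inequality then gives $\int(|u_1|^2+|u_2|^2)^3 \leq (\|u_1\|_{L^6}^2 + \|u_2\|_{L^6}^2)^3$, completing the bound with the right constant $\tfrac{5}{\ell(\ell-1)}$. Equality in AM--GM forces $|u_1|=|u_2|$ a.e., which automatically yields equality in the $L^3$-triangle step, giving the stated characterization. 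The main obstacle is the uniformity of the pointwise asymptotic in the first step: the amplitude $|\eta+N|^{(\ell-2)/6}$ is non-smooth at $\eta=-N$ and the Taylor remainder of $(\eta+N)^\ell$ grows polynomially in $\eta$, so one cannot pass the limit inside the integral naively. I would handle this by a density argument --- first establishing the claim for $g_1, g_2$ whose Fourier transforms are smooth and compactly supported, and then extending to $L^2$ via the Strichartz inequality \eqref{E:Odd Strichartz} and the fact that $\|\widetilde{g}_N\|_{L^2}$ remains uniformly bounded in $N$ (its square converges to $\|g_1\|_{L^2}^2 + \|g_2\|_{L^2}^2$ by a further application of Riemann--Lebesgue).
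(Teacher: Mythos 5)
Your proposal is correct and follows the same architecture as the paper's treatment of this result (the paper defers the odd-curve statement to Frank--Sabin \cite{FS2018} and writes out the analogous inhomogeneous case in Proposition \ref{P:Two-profile limit-3,5}): reduce to the Schr\"odinger extension by expanding the phase around the large frequency $N$ and rescaling, average out the fast oscillation $e^{i\theta_N}$, and extract the sharp constant from the $\theta$-average of $|e^{i\theta}z_1+e^{-i\theta}\overline{z}_2|^6$ followed by the $L^3$ triangle inequality. The one genuine methodological difference is in the averaging step: where the paper invokes the homogenization lemma \cite[Lemma B.1]{FS2018} applied to $\psi_{t,x}(\theta)$, you exploit that $|e^{i\theta}u_1+e^{-i\theta}\overline{u}_2|^6$ is a trigonometric polynomial of degree $6$ in $\theta$, so the oscillating part is a finite sum of terms $e^{ik\theta_N}G$ with $G\in L^1$ (H\"older on sixfold products of $L^6$ functions) and $\theta_N$ linear in $(s,y)$ with frequency vector tending to infinity; Riemann--Lebesgue then suffices, which is more elementary and buys you an exact evaluation of the average, $|z_1|^6+9|z_1|^4|z_2|^2+9|z_1|^2|z_2|^4+|z_2|^6$, reproducing the paper's constant $\tfrac52=\int_0^{2\pi}(1+\cos\theta)^3\,\tfrac{\ddd\theta}{2\pi}$ and the same equality condition via AM--GM. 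The only place you are thinner than you should be is the $L^6_{t,x}$ form of the asymptotic Schr\"odinger step: uniform convergence on compact $(s,y)$-sets does not yield $L^6$ convergence, and your density argument only reduces to Fourier-compactly-supported $g$, for which one still needs uniform-in-$N$ stationary-phase/dispersive bounds to dominate the tails; the paper black-boxes exactly this point (Lemma \ref{L:Asymptotic Schrodinger-3,5}, citing the stationary phase arguments of \cite{DY2023} and \cite{FS2018}), so your proof is complete modulo the same external input.
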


Later in Proposition \ref{P:Two-profile limit-3,5}, we will further investigate this two-profile asymptotic behavior for the inhomogeneous odd curve $\xi^3+\xi^5$ case. Indeed, the profile decomposition Theorem \ref{T:Profile decomposition} and the two-profile asymptotic Proposition \ref{P:Two-profile limit-odd} can imply a precompactness result for the aforementioned Fourier extension inequality \eqref{E:Odd Strichartz}. Here we state this precompactness result as Theorem \ref{T:Precompactness}.

\begin{definition}[Extremal terminologies]
	Recall the group $G$ of symmetries defined in \eqref{E: Associated group}. We say a sequence of functions $(f_n)$ in $L^2(\bR)$ is \textit{precompact up to symmetries} if there exists a sequence of symmetries $([g_n])$ in $G$ such that $\l([g_n]f_n\r)$ has convergent subsequence in $L^2(\bR)$. Meanwhile, a sequence of functions $(f_n)$ in $L^2(\bR)$ is an \textit{extremal sequence} for $\mathbf{M}_{\ell}$ if it satisfies
	\[\|f_n\|_{L_x^2(\bR)}=1,\quad \lim_{n\to\infty} \l\|[E_{\ell}] f_n \r\|_{L_t^6L_x^6(\bR^2)}=\mathbf{M}_{\ell}.\]
	Furthermore, a function $f_{*}$ in $L^2(\bR)$ is called an \textit{extremal function} for $\mathbf{M}_{\ell}$ if $f_{*}$ can make the inequality \eqref{E:Odd Strichartz} an equality and $\|f_{*}\|_{L_x^2}=1$.
\end{definition}

\begin{thm}[Precompactness: odd curve] \label{T:Precompactness}
	All extremal sequences for $\mathbf{M}_{\ell}$ are precompact up to symmetries if and only if
	\begin{equation} \label{T:Precompactness-1}
		\mathbf{M}_{\ell}> \l[\frac{5}{\ell(\ell-1)}\r]^{1/6} \mathbf{M}_{2}.
	\end{equation}
	Here $\mathbf{M}_{2}$ is the sharp constant for the Schr\"odinger operator defined by
	\[\mathbf{M}_{2}:= \sup \l\{\l\|[e^{it\Delta}]f \r\|_{L_{t,x}^{6}}: \|f\|_{L_x^2}=1 \r\}.\]
	In particular, if the strict inequality \eqref{T:Precompactness-1} holds, then there exists an extremal for $\mathbf{M}_{\ell}$.
\end{thm}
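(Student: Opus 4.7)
The plan is to apply the profile decomposition of Theorem~\ref{T:Profile decomposition} to an extremal sequence, quantify each profile's Strichartz contribution via Proposition~\ref{P:Two-profile limit-odd}, and isolate a single surviving profile under the strict hypothesis; the converse I will handle by exhibiting an explicit non-precompact extremal sequence when equality holds.

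For the sufficient direction, let $(f_n)$ be an extremal sequence. Up to a subsequence, Theorem~\ref{T:Profile decomposition} and the last sentence of Remark~\ref{R:Profile decomposition-Strichartz orthogonality} produce $f_n=\sum_{j=1}^{J}[T_n^j](\phi_{+}^j,\phi_{-}^j)+\omega_n^J$ with each profile's parameters in one of the regimes $\xi_n^j\equiv 0$, $h_n^j\xi_n^j\to+\infty$, or $h_n^j\xi_n^j\to-\infty$; in the first regime one may take $\phi_{-}^j=0$ since $[T_{n,+}^j]=[T_{n,-}^j]$. Setting $c_j:=\|\phi_{+}^j\|_{L^2}^2+\|\phi_{-}^j\|_{L^2}^2$, the $L^2$-orthogonality \eqref{T:Profile decomposition-5} yields $\sum_j c_j\le 1$; iterating an $L^6$ Brezis--Lieb identity with the pairwise decay \eqref{T:Profile decomposition-6} and the vanishing \eqref{T:Profile decomposition-2} of the remainder will give
\[\mathbf{M}_\ell^6=\lim_{n\to\infty}\|[E_\ell]f_n\|_{L^6_{t,x}}^6=\sum_{j\ge 1}A_j,\qquad A_j:=\lim_{n\to\infty}\bigl\|[E_\ell][T_n^j](\phi_{+}^j,\phi_{-}^j)\bigr\|_{L^6_{t,x}}^6.\]
For $\xi_n^j\equiv 0$ the Strichartz inequality \eqref{E:Odd Strichartz} gives $A_j\le\mathbf{M}_\ell^6 c_j^3$; for $|h_n^j\xi_n^j|\to\infty$, after undoing the symmetries in $G$ and applying Proposition~\ref{P:Two-profile limit-odd} to $(\phi_{+}^j,\overline{\phi_{-}^j})$ I obtain $A_j\le\tfrac{5}{\ell(\ell-1)}\mathbf{M}_2^6 c_j^3$. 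Under the strict hypothesis both bounds are dominated by $\mathbf{M}_\ell^6 c_j^3$, strictly in the modulated case, so
\[\mathbf{M}_\ell^6\le\mathbf{M}_\ell^6\sum_j c_j^3\le\mathbf{M}_\ell^6(\max_j c_j)\Bigl(\sum_j c_j\Bigr)^2\le\mathbf{M}_\ell^6.\]
Equality throughout forces a single $j_0$ with $c_{j_0}=1$, all other profiles and the $L^2$-remainder vanishing, and $\xi_n^{j_0}\equiv 0$ (else the chain would be strict); the saturation $A_{j_0}=\mathbf{M}_\ell^6$ then identifies $\phi:=\phi_{+}^{j_0}$ as an extremal function and gives $[T_n^{j_0}]^{-1}f_n\to\phi$ in $L^2$, delivering both precompactness up to $G$ and existence of an extremal.

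For the necessary direction I argue contrapositively. Assume $\mathbf{M}_\ell=[5/(\ell(\ell-1))]^{1/6}\mathbf{M}_2$ and take an $L^2$-normalized extremal $\phi$ for $\mathbf{M}_2$ (a Gaussian, by Foschi). Setting $\widetilde{\phi}_n(x):=e^{ixn}\phi(x)+e^{-ixn}\overline{\phi}(x)$ and $f_n:=\widetilde{\phi}_n/\|\widetilde{\phi}_n\|_{L^2}$, the Riemann--Lebesgue lemma applied to $\phi^2$ yields $\|\widetilde{\phi}_n\|_{L^2}^2\to 2$, and Proposition~\ref{P:Two-profile limit-odd} gives $\|[E_\ell]f_n\|_{L^6_{t,x}}\to 2^{-1/2}[40/(\ell(\ell-1))]^{1/6}\mathbf{M}_2=\mathbf{M}_\ell$, so $(f_n)$ is extremal; yet its profile decomposition contains a single non-trivial profile with $\xi_n^1=n\to\infty$, and since the symmetry group $G$ from \eqref{E: Associated group} contains no frequency modulation, the persistent $\pm n$ frequency separation cannot be compensated and no $[g_n]f_n$ can converge strongly in $L^2$. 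The main obstacle in the whole scheme is making the $L^6$ Brezis--Lieb decomposition of $\|[E_\ell]f_n\|^6_{L^6}$ rigorous, with pairwise cross-terms between profiles controlled by \eqref{T:Profile decomposition-6} and the $\omega_n^J$-cross-terms handled via the weak convergence \eqref{T:Profile decomposition-3}; once this and the two-profile upper bound of Proposition~\ref{P:Two-profile limit-odd} are in place, the numerical chain above closes the argument.
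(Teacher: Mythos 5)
Your proposal is correct and follows essentially the same route as the paper: profile decomposition plus the two-profile asymptotic of Proposition \ref{P:Two-profile limit-odd} to rule out modulated profiles under the strict inequality, and the normalized Gaussian two-profile sequence to show both the lower bound $\mathbf{M}_{\ell}\geq [5/\ell(\ell-1)]^{1/6}\mathbf{M}_2$ and non-precompactness in the equality case. The only difference is presentational: you spell out the mass/Strichartz bookkeeping ($\sum_j c_j\le 1$, $A_j\le C\,c_j^3$, convexity) that reduces to a single profile, whereas the paper delegates this standard step to the argument of \cite{DY2023}.
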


\begin{remark}
    For the even curve $|\xi|^{\ell}$ case studied in \cite{BOQ2020,DY2023,JPS2010}, there is no such two-profile phenomenon mentioned above, and thus the corresponding constant in \eqref{T:Precompactness-1} is a smaller canstant $[\ell(\ell-1)/2]^{-1/6}$ rather than $[\ell(\ell-1)/5]^{-1/6}$.
\end{remark}

Next, we investigate the inhomogeneous odd curve case
\[\widetilde{L}_{5} := \l\{(\xi,\xi^{3} + \xi^5): \xi\in\bR\r\}.\]
Here we should emphasize that $\xi^{3} + \xi^5$ is a model case, which is scaling broken. The associated Fourier extension estimate of \cite[Theorem 2.1 \& Remark (c)]{KPV1991} states that
\begin{equation}\label{E:Odd Strichartz-3,5}
	\l\|[\widetilde{E}_{5}]f\r\|_{L_{t,x}^6 (\bR^{2})} \leq \widetilde{\mathbf{M}}_{5} \|f\|_{L_x^2(\bR)},
\end{equation}
where the extension operator $[\widetilde{E}_{5}]$ is defined as
\[[\widetilde{E}_{5}]f(x):=\frac{1}{2\pi} \int_{\bR} |\xi + \xi^3|^{\frac{1}{6}} e^{ix\xi+it(\xi^3+\xi^5)} \widehat{f}(\xi) \ddd \xi.\]
Here we similarly define the \textit{symmetries} $[\widetilde{g}_n]$ for the Strichartz inequality \eqref{E:Odd Strichartz-3,5} by
\[[\widetilde{g}_n]f(x):=\l[e^{it_n (\nabla^3 + \nabla^5)} \r] \l((h_n)^{-1/2} f\l(\frac{x-x_n}{h_n}\r)\r)\]
with
\[\l[e^{it_n (\nabla^3 + \nabla^5)} \r] f(x):=\frac{1}{2\pi} \int_{\bR} e^{ix\xi+it(\xi^3+\xi^5)} \widehat{f}(\xi) \ddd \xi,\]
and the \textit{associated group} $\widetilde{G}$, as well as the corresponding extremal terminologies. For parameters $(h_n^j, x_n^j, \xi_n^j, t_n^j) \subset \bR_{+} \times \bR^3$, we similarly introduce the profile operator $[\widetilde{T}_n^j]$ as follows
\begin{equation} \label{E:Bilinear operator-3,5}
	[\widetilde{T}_n^j](\phi_1, \phi_2) (x):= [\widetilde{T}_{n,+}^{j}]\phi_1 + [\widetilde{T}_{n,-}^{j}]\phi_2,
\end{equation}
where the operators $[\widetilde{T}_{n,+}^{j}]$ and $[\widetilde{T}_{n,-}^{j}]$ are defined as
\begin{equation*}
	[\widetilde{T}_{n,\pm}^{j}] \phi:= \l[e^{it_n^j(\nabla^3+ \nabla^5)}\r] \l[(h_n^j)^{-\frac{1}{2}} e^{\pm i(x-x_n^j)\xi_n^j} \phi \l(\frac{x-x_n^j}{h_n^j}\r)\r].
\end{equation*}

\begin{definition}[Orthogonal parameters: inhomogeneous odd curve]
	We say the parameters $(h_n^j, x_n^j, \xi_n^j, t_n^j)$ are \textit{inhomogeneous odd pairwise orthogonal} if for arbitrary fixed $j\neq k$, these parameters satisfy one of the following:
	\begin{itemize}
		\item $(h_n^j,\xi_n^j) \neq (h_n^k,\xi_n^k)$ with
		\[\lim_{n\to \infty}\l(\frac{h_n^j}{h_n^k} + \frac{h_n^k}{h_n^j} + (h_n^j+h_n^k) |\xi_n^j-\xi_n^k| \r) = \infty\]
		and
		\[\lim_{n\to \infty}\l(\frac{h_n^j}{h_n^k} + \frac{h_n^k}{h_n^j} + (h_n^j+h_n^k) |\xi_n^j+\xi_n^k| \r) = \infty;\]
		\item $(h_n^j,\xi_n^j) \equiv (h_n^k,\xi_n^k) \equiv (h_n, \xi_n)$ with $h_n\xi_n \to +\infty$ and
		\[\lim_{n\to\infty} \l(\l|\frac{x_n^j-x_n^k}{h_n} -\frac{3(t_n^j-t_n^k) (h_n \xi_n)^{2}}{(h_n)^{3}} - \frac{5(t_n^j-t_n^k) (h_n \xi_n)^{4}}{(h_n)^{5}}\r| + \l|\frac{(t_n^j-t_n^k) (h_n \xi_n)^{3}}{(h_n)^{5}}\r| \r)= \infty;\]
		\item $(h_n^j,\xi_n^j) \equiv (h_n^k,\xi_n^k) \equiv (h_n, \xi_n)$ with $h_n\xi_n \to -\infty$ and
		\[\lim_{n\to\infty} \l(\l|\frac{x_n^j-x_n^k}{h_n} -\frac{3(t_n^j-t_n^k) (h_n \xi_n)^{2}}{(h_n)^{3}} - \frac{5(t_n^j-t_n^k) (h_n \xi_n)^{4}}{(h_n)^{5}}\r| + \l|\frac{(t_n^j-t_n^k) (h_n \xi_n)^{3}}{(h_n)^{5}}\r| \r)= \infty;\]
		\item $(h_n^j,\xi_n^j) \equiv (h_n^k,\xi_n^k) \equiv (h_n, \xi_n)$ with $\xi_n \equiv 0$ and
		\[\lim_{n\to\infty} \l(\l|\frac{x_n^j-x_n^k}{h_n} \r| + \l|\frac{t_n^j-t_n^k}{(h_n)^{3}}\r| + \l|\frac{t_n^j-t_n^k}{(h_n)^{5}}\r|\r)= \infty.\]
	\end{itemize}
\end{definition}

\begin{thm}[Profile decomposition: inhomogeneous odd curve] \label{T:Profile decomposition-3,5}
	Let $(f_n)$ be a bounded sequence in $L^2(\bR)$. Then, up to subsequences, there exist a sequence of operators $([\widetilde{T}_n^j])$ defined by \eqref{E:Bilinear operator-3,5} with inhomogeneous odd pairwise orthogonal parameters $(h_n^j, x_n^j, \xi_n^j, t_n^j)$ and a sequence of functions $(\phi_{+}^j, \phi_{-}^j) \subset L^2(\bR) \times L^2(\bR)$ such that for every integer $J\in \bN$, we have the profile decomposition
	\begin{equation}\label{T:Profile decomposition-3,5-1}
		f_n=\sum_{j=1}^{J} [\widetilde{T}_n^j](\phi_{+}^j, \phi_{-}^j) +\omega_n^{J},
	\end{equation}
	where this decomposition possesses the following properties: firstly, the remainder term $\omega_n^{J}$ has vanishing Strichartz norm
	\begin{equation}\label{T:Profile decomposition-3,5-2}
		\lim_{J\to\infty}\limsup_{n\to\infty}\l\|[\widetilde{E}_{5}] \omega_n^{J} \r\|_{L_{t,x}^{6}}=0;
	\end{equation}
	secondly, for each $j\leq J$, there holds the following $L_x^2$ weak convergence
	\begin{equation} \label{T:Profile decomposition-3,5-3}
		[\widetilde{T}_{n,+}^j]^{-1} \omega_n^J \rightharpoonup 0 \quad \text{and} \quad [\widetilde{T}_{n,-}^j]^{-1} \omega_n^J \rightharpoonup 0 \quad \text{as} \quad n\to \infty.
	\end{equation}
\end{thm}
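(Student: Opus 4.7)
The plan is to mirror the proof of Theorem \ref{T:Profile decomposition} in the scaling-broken setting, iteratively extracting profiles via an inverse Strichartz procedure and then checking that the extracted parameters automatically fall into the inhomogeneous odd pairwise orthogonal classification. At each step the profile operators $[\widetilde{T}_{n,\pm}^j]$ are unitary on $L^2(\bR)$, so the formal inverses $[\widetilde{T}_{n,\pm}^j]^{-1}$ are well defined and are the natural objects on which to take weak limits. The two components $(\phi_+^j, \phi_-^j)$ arise because the symbol $\xi^3+\xi^5$ is odd: after centering at a frequency $\xi_n$, mass concentrated near $+\xi_n$ and mass concentrated near $-\xi_n$ both survive the localization and produce distinct weak limits, which is why the profile operator \eqref{E:Bilinear operator-3,5} is bilinear.

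\textbf{Inverse Strichartz and extraction of one profile.} The heart of the argument is the following inverse Strichartz statement: if $(g_n)$ is bounded in $L^2$ and $\liminf_n \|[\widetilde{E}_5]g_n\|_{L_{t,x}^6} \ge \delta > 0$, then along a subsequence there exist parameters $(h_n, x_n, \xi_n, t_n)$ of one of the three allowed types ($h_n\xi_n\to+\infty$, $h_n\xi_n\to-\infty$, or $\xi_n\equiv 0$) and a pair $(\phi_+, \phi_-) \in L^2\times L^2$, not both zero, with $[\widetilde{T}_{n,\pm}]^{-1} g_n \rightharpoonup \phi_\pm$ and with $\|\phi_+\|_{L^2}^2 + \|\phi_-\|_{L^2}^2$ quantitatively bounded below in terms of $\delta$ and $\sup_n \|g_n\|_{L^2}$. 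I would establish this via a Littlewood--Paley improved Strichartz inequality that bounds $\|[\widetilde{E}_5]g_n\|_{L_{t,x}^6}$ by an $\ell^2$ square function of frequency-annulus pieces times an $L_{t,x}^\infty$ norm of a single dyadic piece $[\widetilde{E}_5]P_k g_n$. Concentration of this $L_{t,x}^\infty$ norm at a space-time point pinpoints the tube $(h_n,x_n,\xi_n,t_n)$ in which the profile lives, and stationary phase at the critical frequency of the tube extracts the limit profile.

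\textbf{Iteration, orthogonality, and termination.} Set $\omega_n^0:=f_n$; apply the inverse Strichartz step to $\omega_n^J$ to produce $(\phi_+^{J+1},\phi_-^{J+1})$ and parameters $(h_n^{J+1}, x_n^{J+1}, \xi_n^{J+1}, t_n^{J+1})$; and define $\omega_n^{J+1} := \omega_n^J - [\widetilde{T}_n^{J+1}](\phi_+^{J+1},\phi_-^{J+1})$. The weak-convergence conditions \eqref{T:Profile decomposition-3,5-3} for $\omega_n^{J+1}$ against previously extracted profile operators follow from the construction. The orthogonality of parameters is enforced at each step by a dichotomy: if the new parameter tuple failed inhomogeneous odd pairwise orthogonality against some earlier index $k\le J$, a change of variables would express the new extraction as a translate of the $k$-th profile, forcing the new limit to already have been subtracted off, so the construction can always be arranged so that new parameters are pairwise orthogonal to all previous ones. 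An $L^2$-asymptotic Pythagorean identity (obtained by expanding $\|\omega_n^J\|_{L^2}^2$ using the weak-convergence relations) gives $\sum_j (\|\phi_+^j\|_{L^2}^2 + \|\phi_-^j\|_{L^2}^2) \le \limsup_n\|f_n\|_{L^2}^2 < \infty$, and combined with the quantitative lower bound from the inverse Strichartz step this forces \eqref{T:Profile decomposition-3,5-2}. A standard diagonal subsequence produces the final decomposition.

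\textbf{Main obstacle.} The most delicate piece is the inverse Strichartz step in this scaling-broken setting. Because $\xi^3+\xi^5$ is not homogeneous, the change of variables that linearizes the phase near a given concentration frequency $\xi_n$ depends on the size of $|h_n\xi_n|$, and the two monomials compete differently in different frequency regimes. The asymmetric orthogonality parameters $3(t_n^j-t_n^k)(h_n\xi_n)^2/h_n^3 + 5(t_n^j-t_n^k)(h_n\xi_n)^4/h_n^5$ and $(t_n^j-t_n^k)(h_n\xi_n)^3/h_n^5$ in the preceding definition are precisely the images of the group velocity $3\xi^2+5\xi^4$ and of a residual cubic phase correction under this scale-dependent rescaling; identifying the correct rescaling in each regime, and showing that the Littlewood--Paley refinement and stationary-phase step still produce a nontrivial profile with a quantitative mass lower bound, is where the bulk of the new technical work lies. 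Once this scale-dependent inverse Strichartz is in place, the rest of the argument is essentially bookkeeping parallel to the proof of Theorem \ref{T:Profile decomposition}.
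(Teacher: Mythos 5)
Your proposal is correct in outline but follows a genuinely different route from the paper. You run the inverse-Strichartz iteration directly on the odd-curve operator $[\widetilde{E}_5]$, building the two-component extraction $(\phi_+,\phi_-)$ into each step; the paper instead first proves the profile decomposition for the associated even curve $|\xi|^3+|\xi|^5$ (by essentially the iterative scheme you describe, using the refined Strichartz estimate of Proposition \ref{P:Refined Strichartz} and the approximate local convergence Lemma \ref{L:Approximate local convergence}), and then obtains the odd-curve statement by splitting $f_n$ into positive- and negative-frequency parts, decomposing each, and regrouping the mutually non-orthogonal single profiles into the two-component profiles $[\widetilde{T}_n^j](\phi_{+}^j,\phi_{-}^j)$ --- the same divide-and-regroup argument used for Theorem \ref{T:Profile decomposition}. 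The paper's reduction buys a cleaner treatment of the two-profile phenomenon: the pairing of the $+\xi_n$ and $-\xi_n$ bubbles and the verification of the odd orthogonality conditions (which involve both $|\xi_n^j-\xi_n^k|$ and $|\xi_n^j+\xi_n^k|$) are handled once, at the level of operators, via the dislocation property. Your direct approach avoids the detour through the even curve but must establish, inside the inverse-Strichartz lemma itself, the simultaneous weak convergences $[\widetilde{T}_{n,\pm}]^{-1}g_n\rightharpoonup\phi_{\pm}$ and the cross-term decay $[\widetilde{T}_{n,-}]^{-1}[\widetilde{T}_{n,+}]\rightharpoonup 0$ needed for the Pythagorean expansion; both hold (the latter because the relative modulation $2h_n\xi_n\to\infty$ in the relevant regime), so this is additional bookkeeping rather than an obstruction.

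One point in your sketch does need strengthening. A refined Strichartz inequality whose right-hand side involves only Littlewood--Paley annular pieces $P_k g_n$ cannot detect the modulation parameter $\xi_n$: a profile with $|h_n\xi_n|\to\infty$ occupies a subinterval of its annulus of relative length $(h_n|\xi_n|)^{-1}\to 0$, and it is exactly this regime that produces the two-profile phenomenon and the asymptotic Schr\"odinger behaviour. The supremum on the right-hand side must range over all dyadic intervals $\tau$ (all scales and all locations), with the inhomogeneous weight $|c_\tau+c_\tau^3|^{-1/6}|\tau|^{-1/2}$, as in Proposition \ref{P:Refined Strichartz}; this is what the paper's Whitney-type decomposition, bilinear estimate (Lemma \ref{L:Bilinear Strichartz-3,5}) and finite-cover/quasi-orthogonality lemmas are set up to deliver in the scaling-broken setting. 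With that correction, the remaining steps of your scheme (iteration, the non-orthogonality dichotomy resolved by the already-vanishing weak limits, and termination from the quantitative mass lower bound) go through.
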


\begin{remark} \label{R:Profile decomposition-3,5}
	As stated in Remark \ref{R:Profile decomposition-Strichartz orthogonality}, the orthogonality of parameters is essentially equivalent to the fact that for arbitrary fixed $j\neq k$ there holds
	\begin{equation}\label{T:Profile decomposition-3,5-4}
		[\widetilde{T}_{n,+}^k]^{-1} [\widetilde{T}_{n,+}^j]\rightharpoonup 0 \quad \text{and} \quad [\widetilde{T}_{n,+}^k]^{-1} [\widetilde{T}_{n,-}^j]\rightharpoonup 0 \quad \text{as} \quad n\to \infty
	\end{equation}
	in the weak operator topology of $\mathcal{B}(L^2)$; then for each $J\geq1$, there holds the $L^2$-limit orthogonality
	\begin{equation}\label{T:Profile decomposition-3,5-5}
		\lim_{n\to\infty}\l[\|f_n\|_{L_x^2}^2- \l(\sum_{j=1}^{J} \l\|\phi_{+}^j \r\|_{L_x^2}^2 + \l\|\phi_{-}^j \r\|_{L_x^2}^2\r)-\|\omega_n^{J}\|_{L_x^2}^2\r]=0.
	\end{equation}
	and for each $J\geq 1$, there holds the Strichartz-limit orthogonality
	\begin{equation}\label{T:Profile decomposition-3,5-6}
        \limsup_{n\to\infty}\l(\l\|\sum_{j=1}^J[\widetilde{E}_{\ell}] [\widetilde{T}_n^j](\phi_{+}^j, \phi_{-}^j)\r\|_{L_{t,x}^6}^{6} -\sum_{j=1}^J\l\|[\widetilde{E}_{\ell}] [\widetilde{T}_n^j](\phi_{+}^j, \phi_{-}^j) \r\|_{L_{t,x}^{6}}^{6}\r)=0.
	\end{equation}
        For each fixed $j$, we can further assume the parameters $(h_n^j,\xi_n^j)$ satisfy either $h_n^j\xi_n^j \to \pm\infty$ or $\xi_n^j \equiv 0$. Furthermore, as shown in Remark \ref{R:Profile decomposition-real version}, one can similarly obtain the real version profile decomposition for the inhomogeneous odd curve case.
\end{remark}

Finally, similar to the homogeneous odd curve case, combining the inhomogeneous odd curve version profile decomposition Theorem \ref{T:Profile decomposition-3,5} and two-profile Strichartz norm asymptotic Proposition \ref{P:Two-profile limit-3,5}, one can obtain the following precompactness result. For simplicity, we omit its proof.
\begin{thm}[Precompactness: inhomogeneous odd curve] \label{T:Precompactness-3,5}
	All extremal sequences for $\widetilde{\mathbf{M}}_5$ are precompact up to symmetries if and only if
	\begin{equation} \label{T:Precompactness-3,5-1}
		\widetilde{\mathbf{M}}_{5}> 2^{-1/3} \mathbf{M}_{2}.
	\end{equation}
	In particular, if the strict inequality \eqref{T:Precompactness-3,5-1} holds, then there exists an extremal for $\widetilde{\mathbf{M}}_{5}$.
\end{thm}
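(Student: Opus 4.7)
The plan is to mirror the deduction of Theorem \ref{T:Precompactness} from Theorem \ref{T:Profile decomposition} and Proposition \ref{P:Two-profile limit-odd}, substituting the inhomogeneous counterparts Theorem \ref{T:Profile decomposition-3,5} and Proposition \ref{P:Two-profile limit-3,5}. The threshold $2^{-1/3}=[5/(5\cdot 4)]^{1/6}$ is exactly the homogeneous two-profile constant at $\ell=5$: at frequency scales $|h_n\xi_n|\to\infty$ the quintic term dominates the cubic, so the inhomogeneous phase $\xi^3+\xi^5$ homogenises to the same limit as the pure $\xi^5$ curve.

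For sufficiency, let $(f_n)$ be an extremal sequence for $\widetilde{\mathbf{M}}_5$ and apply Theorem \ref{T:Profile decomposition-3,5}. Combining the Strichartz-limit orthogonality \eqref{T:Profile decomposition-3,5-6} with the vanishing of the remainder Strichartz norm \eqref{T:Profile decomposition-3,5-2},
\[\widetilde{\mathbf{M}}_5^6 = \sum_{j\geq 1}\limsup_{n\to\infty}\l\|[\widetilde{E}_{5}][\widetilde{T}_n^j](\phi_+^j,\phi_-^j)\r\|_{L_{t,x}^6}^6.\]
Setting $M_j^2:=\|\phi_+^j\|_{L_x^2}^2+\|\phi_-^j\|_{L_x^2}^2$, each summand is at most $C_j^6 M_j^6$, where $C_j=2^{-1/3}\mathbf{M}_2$ when $|h_n^j\xi_n^j|\to\infty$ by Proposition \ref{P:Two-profile limit-3,5}, and $C_j=\widetilde{\mathbf{M}}_5$ when $\xi_n^j\equiv 0$ (in the latter case $[\widetilde{T}_{n,+}^j]\equiv[\widetilde{T}_{n,-}^j]$ and, choosing the canonical form $\phi_-^j=0$, the contribution reduces to a single-profile Strichartz bound). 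The $L^2$-limit orthogonality \eqref{T:Profile decomposition-3,5-5} gives $\sum_j M_j^2\leq 1$, whence
\[\widetilde{\mathbf{M}}_5^6 \leq \sup_j C_j^6 \cdot \sup_j M_j^4.\]
Under the strict inequality \eqref{T:Precompactness-3,5-1}, if every profile had $|h_n^j\xi_n^j|\to\infty$ then $\sup_j C_j\leq 2^{-1/3}\mathbf{M}_2<\widetilde{\mathbf{M}}_5$, forcing $\sup_j M_j>1$, which contradicts $M_j\leq 1$. Hence some profile $j_0$ satisfies $\xi_n^{j_0}\equiv 0$, and saturation of the chain forces $M_{j_0}=1$ with all other $M_j=0$, so $\omega_n^J\to 0$ strongly in $L_x^2$ via \eqref{T:Profile decomposition-3,5-5}. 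The symmetry in $\widetilde{G}$ attached to $[\widetilde{T}_n^{j_0}]$ then yields a strongly convergent subsequence of $(f_n)$.

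For necessity, suppose equality $\widetilde{\mathbf{M}}_5=2^{-1/3}\mathbf{M}_2$ holds. Take any extremal sequence $(g_n)$ for $\mathbf{M}_2$ and set
\[\widetilde{g}_{n,N}(x):=\frac{1}{\sqrt{2}}\l(e^{ixN}g_n(x)+e^{-ixN}\overline{g_n}(x)\r).\]
A diagonal choice $N=N_n\to\infty$, combined with Riemann--Lebesgue (for the $L_x^2$ norm) and the equality case of Proposition \ref{P:Two-profile limit-3,5} applied with $g_1=g_2=g_n/\sqrt{2}$, makes $(\widetilde{g}_{n,N_n})$ an extremal sequence for $\widetilde{\mathbf{M}}_5$. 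However, its profile decomposition furnished by Theorem \ref{T:Profile decomposition-3,5} consists of a single profile with Galilean parameter $\xi_n^1=N_n\to\infty$ and both $\phi_+^1,\phi_-^1$ non-trivial; since $\widetilde{G}$ contains only scaling, spatial translation, and time evolution (no Galilean boost), no $[\widetilde{g}_n]\in\widetilde{G}$ can absorb the diverging parameter $N_n$, so $(\widetilde{g}_{n,N_n})$ is not precompact up to symmetries.

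The principal technical input is Proposition \ref{P:Two-profile limit-3,5}, whose proof is where the inhomogeneity genuinely enters: a stationary-phase computation must show that, upon rescaling around a diverging $\xi_n$, the cubic $\xi^3$ contribution is asymptotically negligible against $\xi^5$, so that the resulting two-profile homogenisation constant is the pure $\xi^5$ value $2^{-1/3}$. Granted that proposition, the structural argument above is essentially a transcription of the homogeneous proof of Theorem \ref{T:Precompactness}, which is why the authors elect to omit it.
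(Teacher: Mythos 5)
Your overall architecture is the paper's intended one: the authors explicitly omit this proof and say it follows by combining Theorem \ref{T:Profile decomposition-3,5} with Proposition \ref{P:Two-profile limit-3,5}, exactly as you do, and your mass/Strichartz bookkeeping ($\sum_j M_j^2\leq 1$ versus $\sum_j C_j^6M_j^6$, forcing a single unit-mass profile) is the standard reduction also used for Theorem \ref{T:Precompactness}. There is, however, one genuine gap in how you invoke Proposition \ref{P:Two-profile limit-3,5}. That proposition is stated only for profiles of the unit-scale form $e^{ixN}g_1+e^{-ixN}g_2$, i.e. $h_n\equiv 1$, $x_n=t_n=0$. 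The profiles produced by Theorem \ref{T:Profile decomposition-3,5} in the regime $h_n^j\xi_n^j\to\pm\infty$ carry arbitrary $(h_n^j,x_n^j,t_n^j)$, and --- precisely because $\xi^3+\xi^5$ is scaling-broken --- you cannot conjugate by a symmetry to reduce to the unit-scale case, which is what the homogeneous proof of Theorem \ref{T:Precompactness} does when it rewrites $\|[E_\ell][T_n^{j_0}](\phi_+^{j_0},\phi_-^{j_0})\|$ as $\|[E_\ell](e^{i(\cdot)h_n\xi_n}\phi_+^{j_0}+e^{-i(\cdot)h_n\xi_n}\phi_-^{j_0})\|$. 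You therefore need (and should state) a general-scale variant of Lemma \ref{L:Asymptotic Schrodinger-3,5}/Proposition \ref{P:Two-profile limit-3,5}: after recentering at $\xi_n$ and rescaling by $h_n$, the quadratic coefficient of the phase is $(3\xi_n+10\xi_n^3)h_n^{-2}$ while the weight contributes $|\xi_n+\xi_n^3|$, and the ratio $|\xi_n+\xi_n^3|/(3\xi_n+10\xi_n^3)\to 1/10$ independently of $h_n$ as long as $h_n\xi_n\to\pm\infty$, so the constant $4^{-1/6}=2^{-1/3}$ does persist; but this computation is not literally the cited proposition.

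Two further points. First, in the sufficiency step you dispose of the surviving profile by saying "the symmetry in $\widetilde{G}$ attached to $[\widetilde{T}_n^{j_0}]$ yields a strongly convergent subsequence"; this does prove precompactness up to symmetries (scalings belong to $\widetilde{G}$), but since the inequality is not scale-invariant, a profile with $\xi_n^{j_0}\equiv 0$ and $h_n^{j_0}\to 0$ or $\infty$ converges, after applying the symmetry, to a function whose $[\widetilde{E}_5]$-quotient need not equal $\widetilde{\mathbf{M}}_5$ (the Strichartz norm homogenizes to the pure $\xi^5$ or pure $\xi^3$ problem). So the final clause "there exists an extremal" does not follow from your argument as written; ruling out $h_n^{j_0}\to 0,\infty$ requires comparing $\widetilde{\mathbf{M}}_5$ with the homogeneous constants, which neither you nor the hypothesis \eqref{T:Precompactness-3,5-1} addresses. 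Second, in the necessity direction the equality case of Proposition \ref{P:Two-profile limit-3,5} should be applied with $g_1=g_n/\sqrt{2}$ and $g_2=\overline{g_n}/\sqrt{2}$ (this is exactly Remark \ref{R:Two-profile limit-3,5}), not with $g_1=g_2=g_n/\sqrt{2}$; and the diagonalization over an extremal sequence for $\mathbf{M}_2$ is unnecessary, since Gaussians are known extremals for $\mathbf{M}_2$ and the paper's homogeneous proof simply plugs in $f_*(x)=e^{-|x|^2}$.
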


\subsection{Background}
Profile decomposition plays an important role in the study of various variational problems. In particular, it is an essential part of the concentration-compactness-rigidity method (also called as Kenig-Merle road map) \cite{BG1999,Gerard1998,KM2006,KM2008}, which is very popular in the study of nonlinear dispersive PDEs. It is also a very useful tool in the study of extremal variational type problems.

To illustrate the idea and explain the situation, we use the 1d linear \schrodinger equation as an example.

Consider the following linear \schrodinger equation
\begin{equation}\label{E:1d Schrodinger}
	iv_{t}+\Delta v=0,\quad v(0,x)=v_{0}.	
\end{equation}
One fundamental inequality for the linear propagator $e^{it\Delta}$ to \eqref{E:1d Schrodinger} is the Strichartz inequality
\begin{equation}\label{E:1d Strichartz}
	\|e^{it\Delta}f\|_{L_{t,x}^{6}}\leq  \mathbf{M}_2 \|f\|_{L_{x}^{2}}.
\end{equation}
The notion of profile decomposition rises because of the inherent symmetries of inequality of \eqref{E:1d Strichartz}. One notes that \eqref{E:1d Strichartz} is invariant under time translation, space translation, Galilean boost, and scaling. Here scaling means if one plugs $f_{\lambda}:=\lambda^{-\frac{1}{2}}f(\cdot/\lambda)$ into \eqref{E:1d Strichartz}, both sides of \eqref{E:1d Strichartz} stays the same. Those symmetries cause a lack of compactness of Strichartz inequality, (which may be thought as a dispersive version of Sobolev embedding), and in particular cause obstacles in the study of variation problems.

For example, let us consider the extremal problem for the Strichartz inequality \eqref{E:1d Strichartz}. To be specific, if one considers an extremal sequence $(f_{n})$ for the inequality \eqref{E:1d Strichartz}. In general, one cannot expect $(f_{n})$ to be precompact in $L^{2}$. Because, for example, for any sequence $(x_{n})$, if one poses $g_{n}=f_{n}(x-x_{n})$, then $(g_{n})$ is also an extremal sequence.

Here, a typical profile decomposition argument will proceed as the following. Given such an extremal sequence $(f_{n})$, one can find
\[(\phi^j) \subset L^2(\bR), \quad (h_n^j,\xi_n^j,t_n^j,x_n^j) \subset \bR_{+}\times \bR^3\]
such that up to picking subsequence, one has the following decomposition
\begin{equation}\label{eq: profiled1}
	f_{n}=\sum_{j=1}^{J}[T_n^j]\phi^j + \omega_{n}^J	
\end{equation}
with the vanishing remainder term
\begin{equation}\label{eq:vanish}
	\lim_{J\to\infty}\limsup_{n\to\infty} \l\|e^{it\Delta} \omega_n^J \r\|_{L_{t,x}^{6}}=0;
\end{equation}
and for each fixed $j\leq J$ the $L_x^2$ weak convergence relation
\begin{equation}\label{eq: wc}
	[T_n^j]^{-1} \omega_{n}^J \rightharpoonup 0;
\end{equation}
and the key $\mathcal{B}(L^2)$ weak operator topology orthogonality condition
\begin{equation}\label{eq: ortho}
	[T_n^j]^{-1} [T_n^k] \rightharpoonup 0, \quad \forall j\neq k.
\end{equation}
Here the equation \eqref{eq: profiled1} indeed should be read from right to left, and in particular it defines $\omega_n^J$. Note that the profiles $\phi^j$ themselves are not orthogonal in any sense, and it is totally possible $\phi^1=\phi^2$. Also note that \eqref{eq: wc} and \eqref{eq: ortho} allow one to extract profiles by studying weak convergence and $\phi^j$ is defined by the weak limit of $[T_n^j]^{-1}f_{n}$. For example, if one considers the sequence
\[f_{n}(x)=f(x)e^{inx}+f(x)e^{-inx},\]
one has
\[\phi^1=\phi^2=f,\quad \phi^j \equiv 0 \;\; \text{for} \;\; j\geq 3, \quad \xi_{n}^1=n,\quad \xi_{n}^2=-n,\quad h_n^j\equiv t_n^j \equiv x_n^j \equiv 0 \;\; \text{for} \;\; j\in\bN.\]
What is crucial for profile decomposition to be useful in the study of extremal sequence $(f_{n})$ is that \eqref{eq: ortho} ensures
\begin{equation}\label{eq: bilinear}
	\lim_{n\to\infty} \l\|[e^{it\Delta}] [T_n^j]\phi^j \cdot [e^{it\Delta}] [T_n^k] \phi^k \r\|_{L_{t,x}^{3}}=0.
\end{equation}
And also by \eqref{eq: wc} and \eqref{eq: ortho}, for each fixed $J$, one has 
\begin{equation}\label{eq: masssum}	
	\lim_{n\rightarrow \infty}\sum_{j=1}^{J}\|\phi^j\|_{L_x^2}^{2}+\|\omega_{n}^J\|_{L_x^2}^{2}-\|f_{n}\|_{L_x^2}^{2}=0.
\end{equation}
Property \eqref{eq: masssum} is equally important, but let us highlight \eqref{eq: bilinear} for the moment. Property \eqref{eq: bilinear} means that profiles at different scales essentially decouple with each other. Hence, up to some direct calculus, one concludes if $(f_{n})$ is an extremal sequence, then there is at most one nontrivial profile $\phi^1$, and one can further conclude that $\phi^1$ is an extremal function for \eqref{E:1d Strichartz}.

To summarize, in the discussion above, it is very important that different profiles decouple with each other in some sense, i.e. \eqref{eq: bilinear}, and this decoupling phenomenon comes from the parameters of $[T_n^j]$ but not the profiles themselves. And, in practice, only one nontrivial profile exists in the study of our interested sequence. This is consistent with the general heuristic that an optimal/minimal object in a variational problem cannot be split into two non-trivial pieces.

Profile decompositions based on Strichartz type estimates are quite general, and also work for wave equation, Klein-Gordon equation (note this model is in some sense inhomogeneous and does not satisfy the full scaling symmetry), etc.

One will expect a parallel theory for the Airy equation
\begin{equation}\label{eq: airy}
	u_{t}+u_{xxx}=0
\end{equation}
and the relevant Strichartz estimate
\begin{equation}\label{eq: as}
	\l\|[D^{\frac{1}{6}}] [e^{it\nabla^{3}}] f \r\|_{L_{t,x}^{6}(\bR^2)} \leq C\|f\|_{L_{x}^{2}(\bR)}.
\end{equation}
However, it has been observed by Frank-Sabin \cite{FS2018} that a parallel theory for \eqref{eq: airy} is simply wrong due to a two-profile phenomenon. To be more specific, one may just consider a specific sequence 
\begin{equation} \label{E:Two profiles function}
	f_{n}(x):=\phi(x)e^{inx}+\phi(x)^{-inx}\equiv f_{1,n}+f_{2,n},
\end{equation}
where $\phi$ is some nice Schwartz function on $\mathbb{R}$. We assume $\phi$ is real valued for simplicity. Following the Schr\"odinger case, one may expect that 
\begin{equation}\label{eq: wishlist}
	\lim_{n\rightarrow \infty} \l\|[D^{\frac{1}{6}}] [e^{it\nabla^{3}}] f_{1,n} \cdot [D^{\frac{1}{6}}] [e^{it\nabla^{3}}] f_{2,n} \r\|_{L_{t,x}^{3}}=0.
\end{equation}
However, observe that
\begin{equation}\label{eq: onemoresymmetry}
	f_{2,n}=\overline{f_{1,n}},\quad  [D^{\frac{1}{6}}] [e^{it\nabla^{3}}] f_{2,n}=\overline{[D^{\frac{1}{6}}] [e^{it\nabla^{3}}] f_{1,n}}.
\end{equation}
One see \eqref{eq: wishlist} cannot hold. The failure of \eqref{eq: wishlist}, at first glance, makes one wonder about the usefulness of profile decomposition in the study of \eqref{eq: airy}, for example, the study of extremal sequence for \eqref{eq: as}.

In fact, the question regarding the extremal sequence of \eqref{eq: as} has been studied by Frank-Sabin \cite{FS2018}. Among other important contributions, Frank and Sabin realize a homogenization phenomenon when one studies the Strichartz norm $\|[D^{1/6}] [e^{it\nabla^{3}}] f_n\|_{L_{t,x}^{6}}$ with the functions $f_n$ coming form \eqref{E:Two profiles function}. They have established a two-profile Strichartz norm asymptotic consequence, stated in Proposition \ref{P:Two-profile limit-odd} above, by applying their homogenization result \cite[Lemma B.1]{FS2018}, which is essentially an extension of \cite[Lemma 5.2]{Allaire1992}. Hence, their effort recovers the usefulness of profile decomposition theory in the study of extremal sequences for \eqref{eq: as}. It remains an interesting question to study the parallel theory on complex-valued mass critical gKdV.

Here we briefly sketch some of the works on extremals for Fourier extension type inequalities. These problems have been studied extensively in the last two decades. Works in this research area include abundant situations such as spheres \cite{BBI2015,CFOT2017,CG2022,CO2015,COS2019Adv,CS2012A&P,CS2012Adv,FLS2016,Foschi2015,FS2022,GN2022,MO2022,OQ2021,OTZ2022,Shao2016}, paraboloids \cite{BBCH2009,Carneiro2009,DMR2011,Foschi2007,FS2022DPDE,GN2022,GZ2022,Goncalves2019,Goncalves2019JFA,HZ2006,Kunze2003,Shao2009EJDE,Tautges2022,WZ2021} cones \cite{Carneiro2009,Foschi2007,GN2022,NOST2023,Quilodran2013,Ramos2012,OR2013}, fractional surfaces \cite{BOQ2020,DY2023,DY2022b,JPS2010,JSS2017,OQ2018}, odd curves \cite{BOQ2020,FS2018,Shao2009}, hyperboloids \cite{COS2019ANIHPC,COSS2021,Quilodran2015,Quilodran2022}, nonendpoint type inequalities \cite{FVV2011,FVV2012,HS2012}, general $L^q$-type inequalities \cite{BS2023,BS2023Arxiv,CQ2014,FS2022,NOST2023,Stovall2020} and so on. For a more comprehensive discussion of recent progress, readers are referred to the survey papers \cite{FO2017} and \cite{NOT2023}.

Let us now come back to the aforementioned Airy equation situation. It should be further noted that the aforementioned sequence $\phi(x)e^{inx}+\phi(x) e^{-inx}$ is essentially the only example such that a classical profile decomposition fails. Therefore, one may expect to use the profile decomposition for even type operator $[e^{it|\nabla|^{\ell}}]$ to establish the desired profile decomposition for odd type operator $[e^{it\nabla^{\ell}}]$. The first main point of this paper is to achieve this idea by using some divide-and-regroup arguments, which are shown in Section \ref{S:Homogeneous odd curve}. Roughly speaking, we first do classical profile decomposition on the (Fourier space) positive part and negative part respectively, then regroup the profiles appropriately to establish our desired odd curve version profile decomposition.

The second main point of this paper is to study the inhomogeneous odd curve $\xi^3+\xi^5$ case: the profile decomposition and the two-profile Strichartz norm asymptotic behavior. This inhomogeneous model arises from the motivation of breaking the scaling symmetries. Indeed, similar scaling-broken problems have been widely studied in the PDEs, for instance, \cite{TVZ2007} on the mass-energy double critical NLS, see also the recent paper \cite{Luo2023} and the references therein for more works. To establish the $\xi^3+\xi^5$ version profile decomposition, we first establish the corresponding inhomogeneous even curve $|\xi|^3+|\xi|^5$ version profile decomposition, which relies on the corresponding local convergence property and refined Strichartz estimate, and then apply the aforementioned divide-and-regroup argument to get the desired profile decomposition. For the $\xi^3+\xi^5$ version two-profile Strichartz norm asymptotic behavior as $|h_n\xi_n|\to \infty$, we mainly apply the homogenization result of Frank-Sabin \cite[Lemma B.1]{FS2018}. Finally, these profile decomposition and two-profile asymptotic consequences directly imply the precompactness Theorem \ref{T:Precompactness-3,5}.

The outline of this paper is as follows. In Section \ref{S:Homogeneous odd curve}, we study the homogeneous odd curve case, establish the associated profile decomposition Theorem \ref{T:Profile decomposition}, and then show the precompactness Theorem \ref{T:Precompactness}. In Section \ref{S:Inhomogeneous odd curve}, we turn to study the inhomogeneous odd curve case, establish the corresponding two-profile Strichartz norm asymptotic behavior Proposition \ref{P:Two-profile limit-3,5}, and then establish the associated profile decomposition Theorem \ref{T:Profile decomposition-3,5}.

\section{Homogeneous odd curve} \label{S:Homogeneous odd curve}
\subsection{Preliminaries}
We first recall the profile decomposition consequence of \cite[Proposition 1.5 \& Proposition 1.7]{DY2023} for the even curve $|\xi|^{\ell}$ case, and the corresponding classical $\ell=\{1,2,4\}$ even curve cases were extensively studied in \cite{BG1999,Bourgain1998,BV2007,CK2007,JPS2010,Keraani2001,MV1998}.
\begin{definition}[Orthogonal parameters: even curve] \label{D:Even orthogonality}
	We say the parameters $(h_n^j, x_n^j, \xi_n^j, t_n^j)$ are \textit{even pairwise orthogonal} if for arbitrary fixed $j\neq k$, these parameters satisfy one of the following:
	\begin{itemize}
		\item $(h_n^j,\xi_n^j) \neq (h_n^k,\xi_n^k)$ and
		\[\lim_{n\to \infty}\l(\frac{h_n^j}{h_n^k} + \frac{h_n^k}{h_n^j} + (h_n^j+h_n^k) |\xi_n^j-\xi_n^k| \r) = \infty;\]
		\item $(h_n^j,\xi_n^j) \equiv (h_n^k,\xi_n^k) \equiv (h_n, \xi_n)$ with $|h_n\xi_n| \to \infty$ and
		\[\lim_{n\to\infty} \l(\l|\frac{x_n^j-x_n^k}{h_n} - \frac{(t_n^j-t_n^k)\alpha (h_n \xi_n)^{\alpha-1}}{(h_n)^{\alpha}}\r| + \l|\frac{(t_n^j-t_n^k) (h_n \xi_n)^{\alpha-2}}{(h_n)^{\alpha}}\r| \r)= \infty;\]
		\item $(h_n^j,\xi_n^j) \equiv (h_n^k,\xi_n^k) \equiv (h_n, \xi_n)$ with $\xi_n \equiv 0$ and
		\[\lim_{n\to\infty} \l(\l|\frac{x_n^j-x_n^k}{h_n} \r| + \l|\frac{t_n^j-t_n^k}{(h_n)^{\alpha}}\r| \r)= \infty.\]
	\end{itemize}
\end{definition}

\begin{prop}[Profile decomposition: even curve \cite{DY2023}] \label{P:Profile decomposition-even}
	Let $(f_n)$ be a bounded sequence in $L^2(\bR)$. Then, up to subsequences, there exist a sequence of operators $([T_{*,n}^j])$ defined by
	\[[T_{*,n}^{j}]\phi(x):=[e^{it_n^j|\nabla|^{\ell}}]\l[(h_n^j)^{-\frac{1}{2}}e^{i(x-x_n^j)\xi_n^j}\phi\l(\frac{x-x_n^j}{h_n^j}\r)\r]\]
	with even pairwise orthogonal parameters $(h_n^j, x_n^j, \xi_n^j, t_n^j)$ and a sequence of functions $(\phi^j)\subset L^2(\bR)$ such that for every integer $J\in\bN$, we have the profile decomposition
	\begin{equation}\label{P:Profile decomposition-even-1}
		f_n=\sum_{j=1}^{J} [T_{*,n}^j]\phi^j+\omega_n^{J},
	\end{equation}
	where this decomposition possesses the following properties: firstly, the remainder term $\omega_n^{J}$ has vanishing Strichartz norm
	\begin{equation}\label{P:Profile decomposition-even-2}
		\lim_{J\to\infty}\limsup_{n\to\infty}\l\|[D^{\frac{\ell-2}{6}}][e^{it|\nabla|^{\ell}}]\omega_n^{J}\r\|_{L_{t,x}^6(\bR^{2})}=0;
	\end{equation}
	secondly, for each $j\leq J$, there holds the following $L_x^2$ weak convergence
	\begin{equation} \label{P:Profile decomposition-even-3}
		[T_{*,n}^j]^{-1} \omega_n^J \rightharpoonup 0 \quad \text{as} \quad n\to \infty.
	\end{equation}
\end{prop}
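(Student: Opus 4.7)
The plan is to follow the classical Keraani/Bégout--Vargas profile-decomposition scheme, adapted to the even curve operator $[D^{(\ell-2)/6}][e^{it|\nabla|^{\ell}}]$. The backbone is an iterative bubble-extraction argument fed by a refined Strichartz inequality. First I would establish a refined Strichartz estimate of the form
\[
\bigl\|[D^{\frac{\ell-2}{6}}][e^{it|\nabla|^{\ell}}] f\bigr\|_{L^{6}_{t,x}(\bR^{2})}
\lesssim \|f\|_{L^{2}}^{\theta}\,
\Bigl(\sup_{Q\in\mathcal{W}} |Q|^{-a}\bigl\|[D^{\frac{\ell-2}{6}}][e^{it|\nabla|^{\ell}}] P_{Q}f\bigr\|_{L^{\infty}_{t,x}}\Bigr)^{1-\theta}
\]
for some $\theta\in(0,1)$ and an appropriate exponent $a$, where $\mathcal{W}$ is a Whitney-type decomposition of $\bR\setminus\{0\}$ into intervals $Q$ adapted to the curvature of $|\xi|^{\ell}$, and $P_{Q}$ denotes the Fourier restriction to $Q$. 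This is standard once one has the relevant bilinear or $\varepsilon$-removal estimate for the even power curve; the zero frequency must be isolated separately because the curve $|\xi|^{\ell}$ is nonsmooth there.

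Second, I would prove a bubble-extraction (inverse Strichartz) lemma: if a bounded sequence $(f_{n})$ in $L^{2}$ satisfies
\[
\liminf_{n\to\infty}\bigl\|[D^{\frac{\ell-2}{6}}][e^{it|\nabla|^{\ell}}] f_{n}\bigr\|_{L^{6}_{t,x}}\geq \varepsilon>0,
\]
then after passing to a subsequence there exist parameters $(h_{n},x_{n},\xi_{n},t_{n})$ such that the sequence $[T_{*,n}]^{-1}f_{n}$ converges weakly in $L^{2}$ to a nonzero profile $\phi$ with $\|\phi\|_{L^{2}}\gtrsim \varepsilon^{\beta}$ for some $\beta$ depending only on $\ell$. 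This is obtained by combining the refined Strichartz above with a local convergence property for $[e^{it|\nabla|^{\ell}}]$: when $|h_{n}\xi_{n}|\to\infty$ the frequency is concentrated away from zero where the symbol is smooth and one may linearize its phase, so that the rescaled profile operator reduces in the limit to the Schrödinger propagator acting locally; when $\xi_{n}\equiv 0$ one just uses the intrinsic scaling of the operator. Extracting the parameters from the supremum in the refined Strichartz estimate and splitting into these two regimes gives the required weak limit.

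Third, I would iterate: set $\omega_{n}^{0}:=f_{n}$ and, at each step $j$, apply the bubble extraction lemma to $\omega_{n}^{j-1}$ to obtain parameters $(h_{n}^{j},x_{n}^{j},\xi_{n}^{j},t_{n}^{j})$ and a profile $\phi^{j}$, then set
\[
\omega_{n}^{j}:=\omega_{n}^{j-1}-[T_{*,n}^{j}]\phi^{j}.
\]
The $L^{2}$-weak convergence \eqref{P:Profile decomposition-even-3} for the current profile holds by construction; for previous indices $k<j$ one proves the orthogonality of parameters inductively, by showing that if two sets of parameters fail all three alternatives of Definition \ref{D:Even orthogonality}, then $[T_{*,n}^{k}]^{-1}[T_{*,n}^{j}]\rightharpoonup 0$ fails and one could have merged the two bubbles. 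Combined with this, a Pythagorean-type mass identity
\[
\|f_{n}\|_{L^{2}}^{2}=\sum_{j=1}^{J}\|\phi^{j}\|_{L^{2}}^{2}+\|\omega_{n}^{J}\|_{L^{2}}^{2}+o(1)
\]
follows from the orthogonality of parameters, which then forces $\|\phi^{j}\|_{L^{2}}\to 0$ as $j\to\infty$. Feeding this back into the quantitative bubble extraction bound $\|\phi^{j}\|_{L^{2}}\gtrsim \varepsilon_{j}^{\beta}$, where $\varepsilon_{j}=\limsup_{n}\|[D^{(\ell-2)/6}][e^{it|\nabla|^{\ell}}]\omega_{n}^{j-1}\|_{L^{6}_{t,x}}$, yields $\varepsilon_{j}\to 0$, which is exactly \eqref{P:Profile decomposition-even-2}.

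The main obstacle I expect is the refined Strichartz estimate together with the local convergence step in the regime $|h_{n}\xi_{n}|\to\infty$: one must verify that after translating the frequency window to the origin and rescaling, the operator $[e^{it|\nabla|^{\ell}}]$ restricted to a unit frequency cube does converge, in an appropriate local sense, to the Schrödinger propagator $[e^{it\Delta}]$ (after a Galilean normalization), and that the $D^{(\ell-2)/6}$ weight contributes only a harmless $|\xi_{n}|^{(\ell-2)/6}$ factor that is absorbed into the profile. A careful bookkeeping of the phase expansion $|\xi|^{\ell}=|\xi_{n}|^{\ell}+\ell|\xi_{n}|^{\ell-1}(\xi-\xi_{n})+\tfrac{\ell(\ell-1)}{2}|\xi_{n}|^{\ell-2}(\xi-\xi_{n})^{2}+\cdots$ is what dictates the form of the orthogonality conditions in Definition \ref{D:Even orthogonality}, and this is the only delicate computation in the argument.
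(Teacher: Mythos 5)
Your outline (refined Strichartz via a Whitney-type frequency decomposition, an inverse-Strichartz/bubble-extraction lemma using local convergence to the Schr\"odinger propagator in the regime $|h_n\xi_n|\to\infty$, iteration with parameter orthogonality, and the Pythagorean mass identity forcing the remainder's Strichartz norm to vanish) is exactly the standard scheme used in the cited source \cite{DY2023}, which this paper invokes without reproving; the same template is sketched again in Section \ref{SS:Profile decomposition-3,5} for the inhomogeneous analogue. So your proposal is correct and follows essentially the same route as the paper's (cited) proof.
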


\begin{remark} \label{R:Profile decomposition-even-1}
	The even-orthogonality of parameters is essentially equivalent to the following fact: for arbitrary $j\neq k$, there holds
	\begin{equation}\label{P:Profile decomposition-even-4}
		[T_{*,n}^k]^{-1}[T_{*,n}^j]\rightharpoonup 0 \quad \text{as} \quad n\to \infty
	\end{equation}
	in the weak operator topology of $\mathcal{B}(L^2)$. Hence by \eqref{P:Profile decomposition-even-3}, for each $J\in\bN$, we have the $L^2$-limit orthogonality
	\begin{equation}\label{P:Profile decomposition-even-5}
		\lim_{n\to\infty}\l[\|f_n\|_{L^2(\bR)}^2-\l(\sum_{j=1}^{J}\|\phi^j\|_{L^2(\bR)}^2\r)-\|\omega_n^{J}\|_{L^2(\bR)}^2\r]=0.
	\end{equation}
	The even-orthogonality of parameters further implies that for each $j\neq k$ we have
	\begin{equation}\label{P:Profile decomposition-even-6}
		\lim_{n\to\infty}\l\|[D^{\frac{\ell-2}{6}}][e^{it|\nabla|^{\ell}}][T_{*,n}^j]\phi^j \cdot [D^{\frac{\ell-2}{6}}][e^{it|\nabla|^{\ell}}][T_{*,n}^k]\phi^k\r\|_{L_{t,x}^{3}(\bR^{2})}=0,
	\end{equation}
	which gives that for each $J\in\bN$ there holds the following Strichartz-limit orthogonality
	\begin{equation}\label{P:Profile decomposition-even-7}
		\limsup_{n\to\infty}\l(\l\|\sum_{j=1}^J[D^{\frac{\ell-2}{6}}][e^{it|\nabla|^{\ell}}][T_n^j]\phi^j\r\|_{L_{t,x}^6}^{6} -\sum_{j=1}^J\l\|[D^{\frac{\ell-2}{6}}][e^{it|\nabla|^{\ell}}][T_n^j]\phi^j\r\|_{L_{t,x}^6}^{6}\r)=0.
	\end{equation}
	For fixed $j\in \bN$, by adjusting the profiles, we can assume either $|h_n^j\xi_n^j| \to \infty$ or $\xi_n^j\equiv 0$.
\end{remark}

\begin{remark} \label{R:Profile decomposition-even}
	The operators $[T_{*,n}^{j}]$ in Proposition \ref{P:Profile decomposition-even} satisfy the following dislocation property: for fixed $j\neq k$, if in the weak operator topology there holds
	\[[T_{*,n}^k]^{-1}[T_{*,n}^j]\not\rightharpoonup 0 \quad \text{as} \quad n\to \infty,\]
	then, up to subsequences, there exists a unitary operator $[T^{j,k}] \in \mathcal{B}(L^2)$ such that in the strong operator topology
	\[[T_{*,n}^k]^{-1}[T_{*,n}^j] \to [T^{j,k}] \quad \text{as} \quad n\to \infty.\]
\end{remark}

\subsection{Profile decomposition}
Now we prove the desired homogeneous odd curve version profile decomposition Theorem \ref{T:Profile decomposition}. To begin with, we introduce the following Fourier projection operator
\[\widehat{[P_{E}]f}(\xi):= \widehat{f} \mathds{1}_{E}(\xi), \quad \widehat{[P_{+}]f}(\xi):= \widehat{f} \mathds{1}_{\bR_{+}}(\xi).\]
\begin{proof}[\textbf{Proof of Theorem \ref{T:Profile decomposition}}]
	For the sequence $(f_n)$, we first divide each function as
	\[f_n= [P_{+}] f_{n} + [P_{-}] f_{n}.\]
	Acting the projection operator $[P_{+}]$ on both sides of the aforementioned even curve version profile decomposition \eqref{P:Profile decomposition-even-1}, one can obtain the following decomposition
	\[[P_{+}]f_n = \sum_{k=1}^K [P_{+}] [T_{*,n}^{k}] \phi^k + [P_{+}] \omega_n^{K}.\]
	For each fixed $k$, by the expression
	\begin{align*}
		[P_{+}] [T_{*,n}^{k}] \phi^k(x) &= (h_n^k)^{-\frac{1}{2}} \int_{\bR_{+}} e^{i\l(\frac{x-x_n^k}{h_n^k}\r)\xi + i\frac{t_n^k}{(h_n^k)^{\ell}} \xi^{\ell}} \widehat{\phi}^k(\xi-h_n^k\xi_n^k) \ddd \xi,
	\end{align*}
	and noticing the fact that these parameters $(h_n^k,\xi_n^k)$ satisfy one of the following
	\[\xi_n^k \equiv 0, \quad \lim_{n\to\infty} h_n^k \xi_n^k \to +\infty, \quad \lim_{n\to\infty} h_n^k\xi_n^k \to -\infty,\]
	we can define the modified profile $\phi_{*,+}^k$ as
	\[\widehat{\phi}_{*,+}^k(\xi):= \begin{cases}
		\widehat{\phi}^k(\xi), & \xi_n^k \equiv 0 \\
		\widehat{\phi}^k(\xi), & \lim_{n\to\infty} h_n^k \xi_n^k \to +\infty \\
		0, & \lim_{n\to\infty} h_n^k\xi_n^k \to -\infty
	\end{cases}.\]
	Then for each fixed $k$ there holds
	\[\lim_{n\to\infty} \l\|[P_{+}] [T_{*,n}^{k}] \phi^k - [e^{it_n^k}\nabla^{\ell}] \l[(h_n^k)^{-\frac{1}{2}} e^{i(x-x_n^k)\xi_n^k} \phi_{*,+}^k \l(\frac{x-x_n^k}{h_n^k}\r)\r]\r\|_{L_x^2}=0.\]
	Therefore, for each $k$, we can modify the profiles and then put the error terms into the remainder to deduce the following decomposition
	\begin{align}
		[P_{+}]f_n &= \sum_{k=1}^K [e^{it_n^k}\nabla^{\ell}] \l[(h_n^k)^{-\frac{1}{2}} e^{i(x-x_n^k)\xi_n^k} \phi_{*,+}^k \l(\frac{x-x_n^k}{h_n^k}\r)\r] + [P_{+}] \omega_{n}^{K} + \omega_{n,*,+}^{K} \label{E:Profile decomposition-odd-1} \\
		&=: \sum_{k=1}^K [T_{*,n}^{k,+}] \phi_{*,+}^k + [P_{+}] \omega_{n}^{K} + \omega_{n,*,+}^{K}, \notag
	\end{align}
	where the remainder term $\omega_{n,*,+}^{K}$ satisfies
	\begin{equation} \label{E:Profile decomposition-odd-1.2}
		\lim_{n\to\infty} \|\omega_{n,*,+}^{K}\|_{L_x^2}=0.
	\end{equation}
	By the conclusion \eqref{P:Profile decomposition-even-2} and the Strichartz estimate \eqref{E:Odd Strichartz}, one can obtain that
	\[\lim_{J\to\infty}\limsup_{n\to\infty} \l\|[E_{\ell}] [P_{+}] \omega_n^{K} \r\|_{L_{t,x}^6} = \lim_{J\to\infty}\limsup_{n\to\infty} \l\|[P_{+}] [E_{\ell}] \omega_n^{K} \r\|_{L_{t,x}^6}=0, \quad \lim_{n\to\infty} \l\|[E_{\ell}] \omega_{n,*,+}^{K} \r\|_{L_{t,x}^6}=0.\]
	The parameters in $[T_{*,n}^{k,+}]$ satisfy the even pairwise orthogonality defined in Definition \ref{D:Even orthogonality} and by the definition of $\phi_{*,+}^k$, for fixed $k$, the parameters $(h_n^k,\xi_n^k)$ satisfy either $\xi_n^k\equiv 0$ or $\lim_{n\to\infty} h_n^k\xi_n^k = +\infty$. Hence we may assume $\xi_n^k\geq 0$ up to subsequences. Then as stated in Remark \ref{R:Profile decomposition-even-1}, for fixed $j\neq k$, the orthogonality of parameters further implies the following Strichartz-limit orthogonality
	\begin{equation} \label{E:Profile decomposition-odd-1.5}
		\lim_{n\to\infty} \l\| [E_{\ell}][T_{*,n}^{j,+}]\phi_{*,+}^j \cdot [E_{\ell}][T_{*,n}^{k,+}]\phi_{*,+}^k \r\|_{L_{t,x}^3} = 0.
	\end{equation}
	Meanwhile, for each $k\leq K$, the conclusions \eqref{P:Profile decomposition-even-3} and \eqref{E:Profile decomposition-odd-1.2} imply the following weak convergence
	\[[T_{*,n}^{k,+}]^{-1} \l([P_{+}]\omega_n^K + \omega_{n,*,+}^K\r) \rightharpoonup 0.\]
	In summary, we have shown that the decomposition \eqref{E:Profile decomposition-odd-1} is a profile decomposition for $[P_{+}]f_n$. Similarly, we can obtain the profile decomposition for $[P_{-}]f_n$ as follows
	\[[P_{-}]f_n=\sum_{k=1}^K [T_{*,n}^{k,-}] \phi_{*,-}^k + [P_{-}] \omega_{n}^{K} + \omega_{n,*,-}^{K},\]
	where the parameters are even pairwise orthogonal with $\xi_n^k \in \bR_{-}$ and, for each fixed $k$, the parameters $(h_n^k,\xi_n^k)$ satisfy either $\xi_n^k\equiv 0$ or $\lim_{n\to\infty} h_n^k\xi_n^k =-\infty$.
	Therefore, we obtain the following decomposition
	\begin{equation} \label{E:Profile decomposition-odd-2}
		f_n= \sum_{j=1}^J \l([T_{*,n}^{j,+}] \phi_{+}^j + [T_{*,n}^{j,-}] \phi_{-}^j\r) + \omega_{n,+}^J + \omega_{n,-}^J.
	\end{equation}
	Here the remainder terms satisfy the vanishing Strichartz norm
	\begin{equation} \label{E:Profile decomposition-odd-3}
		\lim_{J\to\infty}\limsup_{n\to\infty} \l\|[E_{\ell}] \l(\omega_{n,+}^{J} + \omega_{n,-}^{J}\r) \r\|_{L_{t,x}^6} = 0
	\end{equation}
	and, for each fixed $j\leq J$, the following weak convergence property
	\begin{equation} \label{E:Profile decomposition-odd-4}
		[T_{*,n}^{j,+}]^{-1} \omega_{n,+}^J \rightharpoonup 0, \quad [T_{*,n}^{j,-}]^{-1} \omega_{n,-}^J \rightharpoonup 0.
	\end{equation}
	Meanwhile, all the operators $[T_{*,n}^{j,+}]$ (as well as $[T_{*,n}^{j,-}]$) are even-orthogonal in the sense that their parameters are even pairwise orthogonal, and there holds the following $L^2$-limit orthogonality
	\begin{equation} \label{E:Profile decomposition-odd-4.2}
		\lim_{n\to\infty}\l[\|f_n\|_{L_x^2}^2- \l(\sum_{j=1}^{J} \l\|\phi_{+}^j \r\|_{L_x^2}^2 + \l\|\phi_{-}^j \r\|_{L_x^2}^2\r)-\|\omega_{n,+}^{J}\|_{L_x^2}^2-\|\omega_{n,-}^{J}\|_{L_x^2}^2\r]=0.
	\end{equation}

	Next, we will modify the decomposition \eqref{E:Profile decomposition-odd-2} to establish our desired profile decomposition \eqref{T:Profile decomposition-1}, then establish the odd pairwise orthogonality of parameters and show the orthogonality of profiles. We will achieve these by using some redefine-and-regroup arguments, which do not change the properties \eqref{E:Profile decomposition-odd-3} and \eqref{E:Profile decomposition-odd-4} and \eqref{E:Profile decomposition-odd-4.2}.
	
	To begin with, we introduce the following frequency-reflected notation
	\[[\overline{T_{*,n}^{j,+}}] \phi(x):= [e^{it_n^j}\nabla^{\ell}] \l[(h_n^j)^{-\frac{1}{2}} e^{-i(x-x_n^j)\xi_n^j} \phi \l(\frac{x-x_n^j}{h_n^j}\r)\r].\]
	For the fixed profile $[T_{*,n}^{1,+}]\phi_{+}^1$ in \eqref{E:Profile decomposition-odd-2}, one can see that all the profiles $[T_{*,n}^{j,+}]\phi_{+}^j$ with $j \neq 1$ are odd-orthogonal to this profile since all the corresponding frequency parameters are nonnegative. Then we are going to show two facts: first, there is at most one term $[T_{*,n}^{j,-}]\phi_{-}^j$ which are not odd-orthogonal to $[T_{*,n}^{1,+}]\phi_{+}^1$, and this term satisfy the following operator weak convergence
	\[[T_{*,n}^{j,-}]^{-1}[\overline{T_{*,n}^{1,+}}] \not\rightharpoonup 0;\]
	second, for the aforementioned term $[T_{*,n}^{j,-}]\phi_{-}^j$, we can find a functions $\phi_{*,-}^j$ such that
	\[\lim_{n\to\infty} \l\|[T_{*,n}^{j,-}]\phi_{-}^j - [\overline{T_{*,n}^{1,+}}] \phi_{*,-}^j\r\|_{L_x^2} = 0.\]
	Indeed, as shown below, these two facts hold for each fixed profile $[T_{*,n}^{j,\pm}]\phi_{\pm}^j$.
	
	For the first fact, we prove by contradiction. Recalling that either $h_n^1\xi_n^1\to +\infty$ or $\xi_n^1 \equiv0$, we first investigate the case $h_n^1\xi_n^1\to +\infty$. Assume that there exist two parameters $j$ and $k$ such that
	\[[T_{*,n}^{j,-}]^{-1}[\overline{T_{*,n}^{1,+}}] \not\rightharpoonup 0, \quad [T_{*,n}^{k,-}]^{-1}[\overline{T_{*,n}^{1,+}}] \not\rightharpoonup 0.\]
	Then by the dislocation property in Remark \ref{R:Profile decomposition-even}, up to subsequences, we know there exist two unitary operators $[T^{1,j}]$ and $[T^{1,k}]$ such that
	\[[T_{*,n}^{j,-}]^{-1}[\overline{T_{*,n}^{1,+}}] \to [T^{1,j}], \quad [T_{*,n}^{k,-}]^{-1}[\overline{T_{*,n}^{1,+}}] \to [T^{1,k}].\]
	This relation further implies
	\[[T_{*,n}^{j,-}]^{-1} [T_{*,n}^{k,-}] = [T_{*,n}^{j,-}]^{-1} [T_{*,n}^{1,+}] [T_{*,n}^{1,+}]^{-1} [T_{*,n}^{k,-}] \to [T^{1,j}] [T^{1,k}]^{-1},\]
	which leads to a contradiction to the operator-orthogonality conclusion \eqref{P:Profile decomposition-even-4}. Hence, for the case $h_n^1\xi_n^1 \to +\infty$, we have shown that there is at most one term $[T_{*,n}^{j,-}]\phi_{-}^j$ such that
	\[[T_{*,n}^{j,-}]^{-1}[\overline{T_{*,n}^{1,+}}] \not\rightharpoonup 0.\]
	By applying similar arguments, one can prove the case $\xi_n^1 \equiv0$. Therefore, we complete the proof of the first fact. Then we prove the second fact. For the aforementioned term $[T_{*,n}^{j,-}]\phi_{-}^j$ which is not odd-orthogonal to $[T_{*,n}^{1,+}]\phi_{+}^1$, we see that
	\begin{equation} \label{E:Profile decomposition-odd-5}
		[T_{*,n}^{j,-}]^{-1}[\overline{T_{*,n}^{1,+}}] \to [T^{1,j}], \quad \Rightarrow \quad [T^{1,j}]^{-1} [T_{*,n}^{j,-}]^{-1}[\overline{T_{*,n}^{1,+}}] \to [E],
	\end{equation}
	where $[E]$ is the identity operator. Notice that
	\begin{equation} \label{E:Profile decomposition-odd-6}
		[T_{*,n}^{j,-}]\phi_{-}^j= [T_{*,n}^{j,-}][T^{1,j}][T^{1,j}]^{-1}\phi_{-}^j = [T_{*,n}^{j,-}][T^{1,j}] \phi_{*,-}^j, \quad \phi_{-}^{j,*}:= [T^{1,j}]^{-1} \phi_{-}^j.
	\end{equation}
	These two relations \eqref{E:Profile decomposition-odd-5} and \eqref{E:Profile decomposition-odd-6} further imply that
	\[\lim_{n\to\infty} \l\|[T_{*,n}^{j,-}]\phi_{-}^j - [\overline{T_{*,n}^{1,+}}]\phi_{-}^{j,*} \r\|_{L_x^2}=0.\]
	This completes the proof of the second fact.
	
	Now we establish the desired profile decomposition by regrouping the profiles in \eqref{E:Profile decomposition-odd-2}. For each integer $j\in\bN$ and the term $[T_{*,n}^{j,\pm}]\phi_{\pm}^j$, we may assume $\pm$ is $+$ without loss of generality. We find all the integers $k_j$ such that $k_j$ are not odd-orthogonal to $j$, and then group these profiles together as a new profile
    \begin{equation} \label{E:Profile decomposition-odd-6.5}
        g_n^j:= [T_{*,n}^{j,+}]\phi_{+}^j + \sum_{k_j: k_j \not\perp j} [T_{*,n}^{k_j,-}]\phi_{-}^{k_j}.
    \end{equation}
	The aforementioned two facts imply that there exists only one $k_j$ and we can further redefine the following profile
	\[\widetilde{g}_n^j:= [T_{*,n}^{j,+}] \phi_{+}^j + [\overline{T_{*,n}^{j,+}}] \phi_{-}^{k_{j},*},\]
	which satisfies
	\begin{equation} \label{E:Profile decomposition-odd-7}
		\lim_{n\to\infty} \l\|e_n^j\r\|_{L_x^2} = 0, \quad e_n^j:= \widetilde{g}_n^j - g_n^j.
	\end{equation}
	Then putting the error term $e_n^j$ into the remainder and denoting
	\[\phi_{+}^j:= \phi_{+}^j, \quad \phi_{-}^j:= \phi_{-}^{k_{j},*}, \quad [T_n^j](\phi_{+}^j,\phi_{-}^j):= [T_{*,n}^{j,+}]\phi_{+}^j + [\overline{T_{*,n}^{j,+}}]\phi_{-}^j,\]
	we obtain the desired profile decomposition \eqref{T:Profile decomposition-1} as follows
	\[f_n = \sum_{j=1}^J [T_n^j](\phi_{+}^j,\phi_{-}^j) + \omega_{n,+}^J +\omega_{n,-}^J.\]
	
	Next, we show the desired properties of this profile decomposition. Firstly, the estimates \eqref{E:Profile decomposition-odd-3} and \eqref{E:Profile decomposition-odd-7} together with \eqref{E:Odd Strichartz} imply the vanishing Strichartz norm property \eqref{T:Profile decomposition-2} for the remainder term. Secondly, for each $j\leq J$, the operators strong convergence relation \eqref{E:Profile decomposition-odd-5} and the functions weak convergence \eqref{E:Profile decomposition-odd-4} imply the following weak convergence conclusion
	\[[\overline{T_{*,n}^{j,+}}]^{-1} \omega_{n,-}^J \rightharpoonup0,\]
	which, by \eqref{E:Profile decomposition-odd-4}, further gives the desired $L^2$ weak convergence conclusion \eqref{T:Profile decomposition-3}. Thirdly, by the construction of $g_n^j$ in \eqref{E:Profile decomposition-odd-6.5}, we know that for each fixed $j\neq k$ there holds the following $\mathcal{B}(L^2)$ weak operator topology convergence
	\[[T_{*,n}^{j,+}]^{-1} [T_{*,n}^{k,+}] \rightharpoonup 0, \quad [T_{*,n}^{j,+}]^{-1} [\overline{T_{*,n}^{k,+}}] \rightharpoonup 0.\]
	These correspond to the odd-orthogonality of parameters defined in Definition \ref{D:Odd orthogonality}, and directly give the desired $L^2$-limit orthogonality \eqref{T:Profile decomposition-5}. Finally, by the even Strichartz-limit orthogonality \eqref{E:Profile decomposition-odd-1.5} and the following identity
	\[\l\|[E_{\ell}][T_{*,n}^{k,+}]\phi_+^k [E_{\ell}][\overline{T_{*,n}^{j,+}}]\phi_{-}^j\r\|_{L_{t,x}^3}= \l\|[E_{\ell}][T_{*,n}^{k,+}]\phi_+^k [E_{\ell}][T_{*,n}^{j,+}]\overline{\phi}_-^j\r\|_{L_{t,x}^3},\]
	we obtain the desired odd Strichartz-limit orthogonality \eqref{T:Profile decomposition-6} and complete the proof.
\end{proof}

\subsection{Characterization of precompactness}
In this subsection, we prove the precompactness result Theorem \ref{T:Precompactness}. The arguments are standard, and our proof will be sketchy. Further details can be seen in, for instance, \cite[Proof of Theorem 1.8]{JPS2010} and \cite[Proof of Theorem A]{DY2023}. 
\begin{proof}[\textbf{Proof of Theorem \ref{T:Precompactness}}]
	Using the profile decomposition Theorem \ref{T:Profile decomposition}, one can follow some classical arguments to obtain the following conclusion: there exist two functions $\phi_{\pm}^{j_0}$ in $L^2$ such that the extremal sequence $f_n= [T_n^{j_0}] (\phi_{+}^{j_0}, \phi_{-}^{j_0})$ and
	\begin{equation*}
		\lim_{n\to\infty} \l\|[E_{\ell}][T_n^{j_0}] (\phi_{+}^{j_0}, \phi_{-}^{j_0}) \r\|_{L_{t,x}^{6}} =\lim_{n\to\infty} \l\|[E_{\ell}]\l(e^{i(\cdot) h_n^{j_0} \xi_n^{j_0}} \phi_{+}^{j_0} + e^{-i(\cdot) h_n^{j_0} \xi_n^{j_0}} \phi_{-}^{j_0}\r) \r\|_{L_{t,x}^{6}} =\mathbf{M}_{\ell},
	\end{equation*}
	where the parameters $(h_n^{j_0}, x_n^{j_0}, \xi_n^{j_0}, t_n^{j_0}) \subset \bR_{+}\times\bR^3$. The readers are referred to \cite[proof of Theorem A]{DY2023} to see the proof of this conclusion.
	
	Recall that the parameters $(h_n^{j_0},\xi_n^{j_0})$ satisfy either $\xi_n^{j_0} \equiv0$, $h_n^{j_0} \xi_n^{j_0} \to +\infty$ or $h_n^{j_0} \xi_n^{j_0} \to -\infty$. By exchanging $\phi_{+}^{j}$ and $\phi_{-}^{j}$, we can assume $\xi_n^{j_0}\geq 0$ without loss of generality. Next, we divide the proof into two cases.
	
	If $\xi_n^{j_0}\equiv 0$, then up to symmetries the sequence $(f_n)$ strongly converges to $\phi_{+}^{j_0} + \phi_{-}^{j_0}$ in $L^2$, and this is an extremal function for $\mathbf{M}_{\ell}$. Otherwise if $h_n^{j_0}\xi_n^{j_0} \to +\infty$, then by the two-profile Strichartz norm asymptotic Proposition \ref{P:Two-profile limit-odd}, we conclude that
	\begin{align*}
		\mathbf{M}_{\ell} &\leq \l[\ell(\ell-1)/5\r]^{-1/6} \l(\l\|[e^{it\Delta}]\phi_{+}^{j_0}\r\|_{L_{t,x}^6}^2 + \l\|[e^{it\Delta}]\phi_{-}^{j_0}\r\|_{L_{t,x}^6}^2 \r)^{1/2} \\
		& \leq \l[\ell(\ell-1)/5\r]^{-1/6} \mathbf{M}_2 \l(\|\phi_{+}^{j_0}\|_{L_x^2}^2 + \|\phi_{-}^{j_0}\|_{L_x^2}^2\r)^{1/2} \\
		&=\l[\ell(\ell-1)/5\r]^{-1/6} \mathbf{M}_2.
	\end{align*}
	Here in the last identity, we have used the $L^2$-limit orthogonality \eqref{T:Profile decomposition-5}.
	
	On the other hand, by taking the following sequence
	\[f_*(x)=e^{-|x|^2}, \quad \widetilde{f}_N^{*}(x):= e^{ix N} f_*(x) + e^{-ix N} \overline{f}_*(x), \quad \widetilde{f}_N:= \frac{\widetilde{f}_N^{*}}{\|\widetilde{f}_N^{*}\|_{L_x^2}}\]
	and recalling the fact that Gaussians are the extremals for $\mathbf{M}_{2}$ shown in \cite{Foschi2007,HZ2006}, we can use the Proposition \ref{P:Two-profile limit-odd} to deduce that
	\[\mathbf{M}_{\ell} \geq  \lim_{N\to\infty} \l\|[E_{\ell}] \widetilde{f}_N\r\|_{L_{t,x}^6} =\l[\ell(\ell-1)/5\r]^{-1/6} \mathbf{M}_2.\]
	Therefore, if $\mathbf{M}_{\ell} > \l[\ell(\ell-1)/5\r]^{-1/6} \mathbf{M}_2$, then the aforementioned $h_n^{j_0}\xi_n^{j_0} \to +\infty$ cannot happen, and all the extremal sequences are precompact up to symmetries. If $\mathbf{M}_{\ell} = \l[\ell(\ell-1)/5\r]^{-1/6} \mathbf{M}_2$, then the aforementioned normalized sequence $(\widetilde{f}_N)$ gives an extremal sequence that is not precompact up to symmetries. Thus the proof is finished.
\end{proof}

\section{Inhomogeneous odd curve} \label{S:Inhomogeneous odd curve}
\subsection{Approximate symmetries} \label{SS:Approximate symmetries}
For the $\xi^3 +\xi^5$ case, this subsection is devoted to studying the effect of frequency parameters $\xi_n$ with $h_n\xi_n \to \pm \infty$. First, we introduce the inhomogeneous odd approximate operator
\begin{equation} \label{E:Approximate operator-3,5}
	[\widetilde{T}_{5,N}^{\pm}]f(t,x):= \int_{\bR} \l| N^{-3} \l(\xi\pm N + (\xi \pm N)^3\r) \r|^{1/6} e^{ix\xi + it \widetilde{\varphi}_{5,N}^{\pm}(\xi)} \widehat{f}(\xi) \ddd\xi
\end{equation}
and the associated limit operator
\[[e^{\pm10it\Delta}]f(x):= \int_{\bR} e^{ix\xi \pm 10 it \xi^2} \widehat{f}(\xi) \ddd \xi,\]
where the phase function $\widetilde{\varphi}_{5,N}^{\pm}(\xi)$ is given by
\[\widetilde{\varphi}_{5,N}^{\pm}(\xi) := N^{-3}\l(\xi^5 \pm 5N\xi^4 +10N^2\xi^3 \pm 10N^3\xi^2 + \xi^3 \pm 3N\xi^2\r).\]
Notice that there pointwisely holds the following relation
\[\lim_{N\to\infty} \widetilde{\varphi}_{5,N}^{\pm}(\xi) = \pm 10 \xi^2.\]
Hence, similar to the asymptotic \schrodinger behavior studied in \cite[Section 4]{DY2023} and \cite[Section 4]{FS2018}, one can use the stationary phase method to obtain the following lemma.
\begin{lemma}[Asymptotic Schr\"odinger: inhomogeneous curve] \label{L:Asymptotic Schrodinger-3,5}
	For any $f\in L^2(\bR)$, there holds
	\[\lim_{N\to +\infty} \l\|[\widetilde{T}_{5,N}^{\pm}]f(t,x) - [e^{\pm10it\Delta}] f(x) \r\|_{L_{t,x}^{6}} =0.\]
\end{lemma}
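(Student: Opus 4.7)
The plan is to combine a density reduction with a split of the $L_{t,x}^{6}$ norm into a near regime $|t|\leq T$ and a far regime $|t|>T$, handling the first by uniform decay of the symbol as $N\to\infty$ and the second by a dispersive estimate uniform in large $N$. First I would note that $[\widetilde{T}_{5,N}^{\pm}]$ is obtained from $[\widetilde{E}_{5}]$ by a change of Fourier variable $\xi\mapsto \xi\pm N$, a rescaling in $(t,x)$, a Galilean boost, and a time translation, each of which preserves the $L_{x}^{2}$ and $L_{t,x}^{6}$ norms of the extension. Consequently there is a universal constant $C$ with $\|[\widetilde{T}_{5,N}^{\pm}]\|_{L^{2}\to L_{t,x}^{6}}\leq C\widetilde{\mathbf{M}}_{5}$ uniformly in $N$, while $\|[e^{\pm10it\Delta}]\|_{L^{2}\to L_{t,x}^{6}}\leq \mathbf{M}_{2}$ by the classical \schrodinger Strichartz inequality. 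Since the set of $f$ for which the conclusion holds is therefore closed in $L^{2}(\bR)$, it suffices to treat $f$ in a dense subclass; I will fix $f$ with $\widehat{f}\in C_{c}^{\infty}(\bR)$ and $\supp\widehat{f}\subset [-R,R]$. For $N\geq 2R$, a direct Taylor expansion on $[-R,R]$ gives $a_{N}^{\pm}(\xi):=|N^{-3}(\xi\pm N+(\xi\pm N)^{3})|^{1/6}=1+O_{R}(N^{-1})$ and $\widetilde{\varphi}_{5,N}^{\pm}(\xi)=\pm10\xi^{2}+O_{R}(N^{-1})$, with analogous uniform bounds on all derivatives; in particular $|(\widetilde{\varphi}_{5,N}^{\pm})''(\xi)|\geq 10$ on $[-R,R]$ for $N$ large.

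Setting $F_{N}(t,x):=[\widetilde{T}_{5,N}^{\pm}]f(t,x)-[e^{\pm10it\Delta}]f(x)$, I would split the $L_{t,x}^{6}$ norm at $|t|=T$. For $|t|\leq T$, the pointwise bounds $|a_{N}^{\pm}(\xi)-1|\leq C_{R}/N$ and $|e^{it\widetilde{\varphi}_{5,N}^{\pm}(\xi)}-e^{\pm10it\xi^{2}}|\leq C_{R}(1+|t|)/N$ applied inside the defining $\xi$-integral of $F_{N}$ give $\|F_{N}(t,\cdot)\|_{L_{x}^{\infty}}+\|F_{N}(t,\cdot)\|_{L_{x}^{2}}\leq C_{f}(1+T)/N$ (via $L^{1}_{\xi}$ on $\supp\widehat{f}$ and Plancherel respectively), so interpolation produces $\|F_{N}\|_{L_{t,x}^{6}(\{|t|\leq T\})}\leq C_{f,T}/N\to 0$ as $N\to\infty$ for each fixed $T$. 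For $|t|>T$, van der Corput applied with the uniform lower bound $|(\widetilde{\varphi}_{5,N}^{\pm})''|\geq 10$ on $[-R,R]$ together with the classical \schrodinger dispersive estimate give the pointwise bound $\|[\widetilde{T}_{5,N}^{\pm}]f(t,\cdot)\|_{L_{x}^{\infty}}+\|[e^{\pm10it\Delta}]f(t,\cdot)\|_{L_{x}^{\infty}}\leq C_{f}/|t|^{1/2}$ uniformly in large $N$; combined with the uniform $L_{x}^{2}$ control from Plancherel and the interpolation $\|g\|_{L_{x}^{6}}^{6}\leq \|g\|_{L_{x}^{\infty}}^{4}\|g\|_{L_{x}^{2}}^{2}$, this yields $\|F_{N}(t,\cdot)\|_{L_{x}^{6}}^{6}\leq C_{f}/|t|^{2}$, hence $\|F_{N}\|_{L_{t,x}^{6}(\{|t|>T\})}\leq C_{f}/T^{1/6}$ uniformly in large $N$. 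Sending $N\to\infty$ and then $T\to\infty$ finishes the argument.

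The main obstacle I expect is making the dispersive estimate for $[\widetilde{T}_{5,N}^{\pm}]$ truly uniform in large $N$: the van der Corput step must produce constants depending only on the uniform lower bound of $|(\widetilde{\varphi}_{5,N}^{\pm})''|$ on $[-R,R]$ and on the $C^{1}$-size of the amplitude $a_{N}^{\pm}\widehat{f}$, not on $N$ itself. The reduction to $\widehat{f}\in C_{c}^{\infty}([-R,R])$ with $N\gg R$ is crucial here, since it guarantees that all remainder terms in the symbol and its derivatives are quantitatively $O_{R}(N^{-1})$ so that the critical-point structure of $\Phi(\xi):=t\widetilde{\varphi}_{5,N}^{\pm}(\xi)+x\xi$ stays uniformly non-degenerate on the $\xi$-support. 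An alternative route is to follow the stationary phase analysis of \cite[Section 4]{DY2023} and \cite[Section 4]{FS2018} essentially verbatim, exploiting that $\widetilde{\varphi}_{5,N}^{\pm}$ is a polynomial perturbation of $\pm 10\xi^{2}$ whose non-quadratic coefficients tend to zero.
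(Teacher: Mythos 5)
Your proposal is correct and is essentially the argument the paper intends: the paper omits the proof, deferring to ``the stationary phase method'' of \cite[Section 4]{DY2023} and \cite[Section 4]{FS2018}, and your write-up (density reduction via the uniform $L^2\to L_{t,x}^6$ bounds, pointwise symbol convergence on $|t|\leq T$, and a van der Corput dispersive bound uniform in large $N$ on $|t|>T$) is a complete execution of exactly that method. The key uniformity issues you flag — $|(\widetilde{\varphi}_{5,N}^{\pm})''|\geq 10$ and the uniform $C^1$ control of $a_N^{\pm}\widehat{f}$ on a fixed compact frequency support with $N\gg R$ — are precisely the points that make the cited argument go through here.
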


Next, we use this inhomogeneous asymptotic \schrodinger lemma to investigate the asymptotic behavior of the reduced two-profile Strichartz norms. The homogeneous $\xi^3$ case was first studied by Frank and Sabin \cite[Lemma 2.4]{FS2018}, where they have applied a homogenization result to compute this specific value. Their consequence can be parallelly extended to general homogeneous odd curve cases, here we show that this can be further extended to general inhomogeneous odd curve cases.
\begin{prop}[Two-profile Strichartz norm asymptotic] \label{P:Two-profile limit-3,5}
	For arbitrary two functions $g_1$ and $g_2$ in $L^2(\bR)$, denoting
	\[\widetilde{g}_N(x):= e^{ix N} g_1(x) + e^{-ix N} g_2(x),\]
	then there holds
	\begin{equation} \label{P:Two-profile limit-3,5-1}
		\lim_{N\to\infty} \l\|[\widetilde{E}_{5}]\widetilde{g}_N \r\|_{L_{t,x}^6} \leq 4^{-1/6} \l(\l\|[e^{it\Delta}]g_1\r\|_{L_{t,x}^6}^2 + \l\|[e^{it\Delta}]g_2\r\|_{L_{t,x}^6}^2 \r)^{1/2},
	\end{equation}
	and the equality holds if and only if
	\[\l|[e^{it\Delta}]g_1(x)\r|= \l|[e^{-it\Delta}]g_2(x)\r| ,\quad \text{a.e.} \;\; (t,x) \in \bR^2.\]
\end{prop}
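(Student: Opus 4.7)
The plan is threefold: (i) change variables to express $[\widetilde{E}_{5}]\widetilde{g}_{N}$ as a superposition of the two approximate Schr\"odinger operators $[\widetilde{T}_{5,N}^{\pm}]g_{1,2}$ modulated by a rapidly oscillating phase; (ii) use the asymptotic Schr\"odinger Lemma \ref{L:Asymptotic Schrodinger-3,5} to pass to the limit operators $[e^{\pm 10it\Delta}]g_{1,2}$ and a Frank--Sabin-type homogenization to extract the oscillation average; (iii) close with H\"older and AM--GM.

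For step (i), split $[\widetilde{E}_{5}]\widetilde{g}_{N} = I_{+,N}+I_{-,N}$ corresponding to the two frequency translations $e^{\pm iNx}$. Substitute $\xi = \eta \pm N$ in $I_{\pm,N}$ and expand $(\eta\pm N)^{3}+(\eta\pm N)^{5}$; after absorbing the $\eta$-linear correction as a transport, both pieces become natural in the common coordinates $\tau := tN^{3}$, $y := x + t(5N^{4}+3N^{2})$. A direct computation then gives
\[
I_{\pm, N}(t,x) = N^{1/2} e^{\pm i\phi_{N}(y,\tau)} [\widetilde{T}_{5,N}^{\pm}]g_{1,2}(\tau, y), \qquad \phi_{N}(y,\tau) := yN - \tau(4N^{2}+2).
\]
Combining with the Jacobian $dt\,dx = N^{-3}d\tau\,dy$ yields
\[
\l\|[\widetilde{E}_{5}]\widetilde{g}_{N}\r\|_{L^{6}_{t,x}}^{6} = \int_{\bR^{2}}\l|e^{i\phi_{N}}[\widetilde{T}_{5,N}^{+}]g_{1}(\tau, y) + e^{-i\phi_{N}}[\widetilde{T}_{5,N}^{-}]g_{2}(\tau, y)\r|^{6} d\tau\,dy.
\]

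For step (ii), by Lemma \ref{L:Asymptotic Schrodinger-3,5}, $[\widetilde{T}_{5,N}^{\pm}]g_{1,2} \to U_{1,2}$ in $L^{6}_{t,x}$ where $U_{1}(\tau,y) := [e^{10i\tau\Delta}]g_{1}(y)$ and $U_{2}(\tau,y) := [e^{-10i\tau\Delta}]g_{2}(y)$; the elementary bound $\l||a|^{6}-|b|^{6}\r|\lesssim(|a|^{5}+|b|^{5})|a-b|$ and H\"older allow replacing $[\widetilde{T}_{5,N}^{\pm}]g_{1,2}$ by $U_{1,2}$ with vanishing error. Since $\nabla_{(\tau,y)}\phi_{N} = (-(4N^{2}+2), N)$ tends to infinity, a Frank--Sabin-type homogenization (see \cite[Lemma B.1]{FS2018}, extending \cite[Lemma 5.2]{Allaire1992}) shows that in the binomial expansion of $|e^{i\phi_{N}}U_{1}+e^{-i\phi_{N}}U_{2}|^{6} = \sum_{p,q} \binom{3}{p}\binom{3}{q} e^{i(2p-2q)\phi_{N}} U_{1}^{p}U_{2}^{3-p}\overline{U_{1}}^{q}\overline{U_{2}}^{3-q}$ only the non-oscillating terms ($p=q$) survive, giving
\[
\lim_{N\to\infty} \l\|[\widetilde{E}_{5}]\widetilde{g}_{N}\r\|_{L^{6}_{t,x}}^{6} = \int_{\bR^{2}} \l(|U_{1}|^{6} + 9|U_{1}|^{4}|U_{2}|^{2} + 9|U_{1}|^{2}|U_{2}|^{4} + |U_{2}|^{6}\r) d\tau\,dy.
\]
For step (iii), H\"older gives $\int |U_{1}|^{4}|U_{2}|^{2} \leq \|U_{1}\|_{L^{6}}^{4}\|U_{2}\|_{L^{6}}^{2}$ (with equality iff $|U_{1}|\propto|U_{2}|$ pointwise) and symmetrically; combined with the algebraic identity
\[
\tfrac{5}{2}(\alpha+\beta)^{3} - (\alpha^{3}+9\alpha^{2}\beta+9\alpha\beta^{2}+\beta^{3}) = \tfrac{3}{2}(\alpha-\beta)^{2}(\alpha+\beta) \geq 0
\]
applied to $\alpha = \|U_{1}\|_{L^{6}}^{2}$, $\beta = \|U_{2}\|_{L^{6}}^{2}$ (equality iff $\alpha = \beta$), one obtains $\lim\l\|[\widetilde{E}_{5}]\widetilde{g}_{N}\r\|_{L^{6}_{t,x}}^{6} \leq \tfrac{5}{2}(\|U_{1}\|_{L^{6}}^{2}+\|U_{2}\|_{L^{6}}^{2})^{3}$. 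Rescaling via $s = \pm 10\tau$ gives $\|U_{i}\|_{L^{6}_{\tau,y}}^{2} = 10^{-1/3}\|[e^{it\Delta}]g_{i}\|_{L^{6}_{t,x}}^{2}$, so the prefactor becomes $\tfrac{5}{2}\cdot 10^{-1} = \tfrac{1}{4}$, yielding \eqref{P:Two-profile limit-3,5-1}. The two equality conditions together force $|U_{1}|=|U_{2}|$ a.e., which upon inverting the rescaling is exactly $|[e^{it\Delta}]g_{1}(x)| = |[e^{-it\Delta}]g_{2}(x)|$ a.e.

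The main obstacle is the homogenization step: one must carefully verify that the linear phase $\phi_{N}(y,\tau) = yN - \tau(4N^{2}+2)$, whose gradient components blow up at different rates in $y$ and $\tau$, fits the hypotheses of \cite[Lemma B.1]{FS2018}, and that the off-diagonal oscillating terms vanish in $L^{1}_{\tau,y}$ in this two-dimensional setting. The algebraic bookkeeping in step (i), though essentially computational, is also delicate because of the $\xi^{3}+\xi^{5}$ mixing in the inhomogeneous phase.
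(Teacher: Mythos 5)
Your proposal is correct and follows essentially the same route as the paper: the same change of variables reducing $[\widetilde{E}_{5}]\widetilde{g}_N$ to modulated copies of $[\widetilde{T}_{5,N}^{\pm}]g_{1,2}$, Lemma \ref{L:Asymptotic Schrodinger-3,5} to pass to $[e^{\pm 10it\Delta}]g_{1,2}$, a homogenization step to discard the oscillating cross terms, and the sharp constant $\tfrac52\cdot 10^{-1}=\tfrac14$ with the same equality analysis. Your only cosmetic deviations are that you kill the off-diagonal terms by expanding the sixth power (where Riemann--Lebesgue already suffices, so the ``main obstacle'' you flag is milder than you fear, and the paper instead invokes \cite[Lemma B.1]{FS2018} on a torus average) and that you use H\"older term-by-term plus the identity $\tfrac52(\alpha+\beta)^3-(\alpha^3+9\alpha^2\beta+9\alpha\beta^2+\beta^3)=\tfrac32(\alpha-\beta)^2(\alpha+\beta)$ where the paper uses the pointwise $\theta$-average inequality followed by the triangle inequality in $L^3$.
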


\begin{remark}[Conjugate-profile] \label{R:Two-profile limit-3,5}
	By this two-profile Strichartz norm asymptotic result, we directly obtain the following conclusion: for any function $f\in L^2(\bR)$, denoting
	\[\widetilde{f}_N(x):= e^{ix N} f(x) + e^{-ix N} \overline{f}(x),\]
	then there holds
	\[\lim_{N\to\infty} \l\|[\widetilde{E}_{5}]\widetilde{f}_N \r\|_{L_{t,x}^6} = 2^{1/6} \l\|[e^{it\Delta}]f\r\|_{L_{t,x}^6}.\]
\end{remark}

\begin{proof}[\textbf{Proof of Proposition \ref{P:Two-profile limit-3,5}}]
	By changing of variables, there holds{\footnotesize
	\begin{align*}
		\l\|[\widetilde{E}_{5}] \widetilde{f}_N \r\|_{L_{t,x}^6} &= \l\|e^{ix N +it(N^2+1)} [\widetilde{T}_{5,N}^{+}]g_1(t,x+3N^{-1}t+5N t) + e^{-ix N -it(N^2+1)} [\widetilde{T}_{5,N}^{-}] g_2(t,x+3N^{-1}t+5N t) \r\|_{L_{t,x}^6} \\
		&=\l\|e^{ix N +it(N^2+1)} [\widetilde{T}_{5,N}^{+}]g_1 + e^{-ix N -it(N^2+1)} [\widetilde{T}_{5,N}^{-}] g_2 \r\|_{L_{t,x}^6}.
	\end{align*}}
	Then applying the inhomogeneous asymptotic \schrodinger Lemma \ref{L:Asymptotic Schrodinger-3,5} and noticing that the function $e^{i\theta}$ with $\theta\in\bR$ is uniformly bounded, we can further deduce
	\begin{align*}
		\l\|[\widetilde{E}_{5}]\widetilde{f}_N \r\|_{L_{t,x}^6} &= \l\|e^{ixN +it(N^2+1)} [e^{10it\Delta}] g_1 + e^{-ixN -it(N^2+1)} [e^{-10it\Delta}] g_2 \r\|_{L_{t,x}^6} + o_{N\to\infty}(1).
	\end{align*}
	 Next, we introduce the following function
	\[\psi_{t,x}(\theta):= \l|e^{i(\theta_1+\theta_2)}[e^{10it\Delta}] g_1(x) + e^{-i(\theta_1+\theta_2)}[e^{-10it\Delta}]
	 g_2(x)\r|^6, \quad \theta \in [\bR/(2\pi\bZ)]^2.\]
	Then for almost everywhere $(t,x)\in\bR^2$, the function $\psi_{t,x}(\theta)$ is continuous and its maximum value satisfies
	\[\int_{\bR^2} \max_{\theta} \psi_{t,x}(\theta) \ddd x\ddd t \lesssim \int_{\bR^2} \l|[e^{10it\Delta}] g_1(x)\r|^6 + \l|[e^{-10it\Delta}]
	g_2(x)\r|^6 \ddd x \ddd t <\infty.\]
	Hence, we can apply the homogenization result of Frank-Sabin \cite[Lemma B.1]{FS2018} to obtain
	\begin{align*}
		\l\|[\widetilde{E}_{5}] \widetilde{f}_N \r\|_{L_{t,x}^6}^6 &= \int_{\bR} \frac{1}{2\pi} \int_0^{2\pi} \int_{\bR} \l|e^{i\theta} [e^{10it\Delta}]g_1(x) + e^{-i\theta} [e^{-10it\Delta}] g_2(x) \r|^6 \ddd x \ddd \theta \ddd t + o_{N\to\infty}(1).
	\end{align*}
	Notice the inequality
	\[\frac{1}{2\pi} \int_0^{2\pi} |e^{i\theta} z_1 +e^{-i\theta} z_2|^r \ddd \theta \leq (|z_1|^2+|z_2|^2)^{r/2} \int_0^{2\pi} (1+\cos\theta)^{r/2} \frac{\ddd \theta}{2\pi}\]
	with the equality holds if and only if $2|z_1||z_2|= |z_1|^2 +|z_2|^2$. This inequality can be found in \cite[Equation (2.4)]{FS2018}. We then apply the triangle inequality to deduce
	\begin{align*}
		\l\|[\widetilde{E}_{5}] \widetilde{f}_N \r\|_{L_{t,x}^6}^6 &\leq \frac{5}{2} \int_{\bR^2} \l(\l|[e^{10it\Delta}]g_1(x)\r|^2 + \l|[e^{-10it\Delta}] g_2(x)\r|^2\r)^3 \ddd x \ddd t + o_{N\to\infty}(1) \\
		&\leq \frac{5}{2} \l(\l\|[e^{10it\Delta}]g_1(x)\r\|_{L_{t,x}^6}^2 + \l\|[e^{10it\Delta}]g_2(x)\r\|_{L_{t,x}^6}^2\r)^{3} + o_{N\to\infty}(1) \\
		&= \frac{1}{4} \l(\l\|[e^{it\Delta}]g_1(x)\r\|_{L_{t,x}^6}^2 + \l\|[e^{it\Delta}]g_2(x)\r\|_{L_{t,x}^6}^2\r)^{3} + o_{N\to\infty}(1)
	\end{align*}
	Here the first inequalities can be equal if and only if
	\[2\l|[e^{10it\Delta}]g_1(x)\r| \cdot \l|[e^{-10it\Delta}] g_2(x)\r| = \l|[e^{10it\Delta}]g_1(x)\r|^2 + \l|[e^{-10it\Delta}] g_2(x)\r|^2 \quad  \text{a.e.} \;\; (t,x) \in \bR^2,\]
	and the second inequalities can be equal if and only if there exists some $k_1\in\bR_{+}$ such that
	\[\l|[e^{10it\Delta}]g_1(x)\r| = k_1 \l|[e^{-10it\Delta}] g_2(x)\r| \quad \text{a.e.} \;\; (t,x) \in \bR^2.\]
	In summary, these two inequalities can be equal simultaneously if and only if
	\[\l|[e^{it\Delta}]g_1(x)\r|= \l|[e^{-it\Delta}]g_2(x)\r| ,\quad \text{a.e.} \;\; (t,x) \in \bR^2.\]
	Hence, we obtain the desired Proposition \ref{P:Two-profile limit-3,5}. Then, noticing the identity
	\[[e^{-10it\Delta}] \overline{f}(x) = \overline{[e^{10it\Delta}]f(x)},\]
	we directly conclude the Remark \ref{R:Two-profile limit-3,5}.
\end{proof}

Next, for the approximate operator $[\widetilde{T}_{5,N}^{\pm}]$, we investigate its local convergence property.
\begin{lemma}\label{L:Approximate local smooth-3,5}
	Let $\varphi$ be a Schwartz function and $\eta\geq 100$. Then for arbitrary function $f\in L^2(\bR)$ whose Fourier support in $\{\xi\in \bR: \xi\geq -\eta\}$, we have
	\[\int_{\bR^2} \varphi(x) \Big|[\psi_{\eta}(D)] [\widetilde{T}_{5}^{\eta}]f(t,x)\Big|^2 \ddd x\ddd t \lesssim \|f\|_{L_x^2}^2,\]
	where the operator $[\psi_{\eta}(D)][\widetilde{T}_{5}^{\eta}]$ is defined as
	\[[\psi_{\eta}(D)][\widetilde{T}_{5}^{\eta}]f(t,x):= \frac{1}{2\pi} \int_{\bR} \psi_{\eta}(\xi) \l| \l(\xi+ \eta + (\xi + \eta)^3\r)/\eta^3 \r|^{1/6} e^{ix\xi +it \phi_{\eta}(\xi)} \widehat{f}(\xi) \ddd \xi\]
	with
	\[\psi_{\eta}(\xi):=|\eta|^{1/2}|\xi|^{1/2}\l|\xi+\eta\r|^{-\frac{1}{2}}, \quad \phi_{\eta}(\xi):= \eta^{-3}\l(\xi^5 + 5\eta \xi^4 +10\eta^2\xi^3 + 10\eta^3\xi^2 + \xi^3 + 3\eta\xi^2\r).\]
\end{lemma}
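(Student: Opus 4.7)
The plan is to prove this local smoothing bound by a Kato-type Plancherel-in-time argument, supported by the absolute integrability of $\varphi$ in the $x$-direction. I would start by making explicit the factorization
\[
\phi_\eta'(\xi)=\eta^{-3}\,\xi\,(\xi+2\eta)\bigl[5\bigl((\xi+\eta)^2+\eta^2\bigr)+3\bigr],
\]
which is obtained by writing $\phi_\eta(\xi)=\tilde\phi(\xi+\eta)$ with $\tilde\phi'(u)=\eta^{-3}(u-\eta)(u+\eta)[5(u^2+\eta^2)+3]$. On the Fourier support $\{\xi\geq -\eta\}$ this shows that $\phi_\eta$ has a unique non-degenerate critical point at $\xi=0$ and is strictly monotonic on each of the pieces $(-\eta,0)$ and $(0,\infty)$; moreover $|\phi_\eta'(\xi)|\sim 20|\xi|$ near $\xi=0$, which is exactly what the $|\xi|^{1/2}$ factor in the definition of $\psi_\eta$ is designed to neutralise.

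Next, I would split $f=f_{+}+f_{-}$ with $\widehat{f_{+}}$ supported in $[0,\infty)$ and $\widehat{f_{-}}$ in $[-\eta,0]$; then $\|f\|_{L^2}^2=\|f_{+}\|_{L^2}^2+\|f_{-}\|_{L^2}^2$ and, by $|a+b|^2\lesssim |a|^2+|b|^2$, it suffices to bound each piece separately. On each monotonic piece, the change of variables $\tau=\phi_\eta(\xi)$ together with Plancherel in $t$ yields the pointwise-in-$x$ identity
\[
\int_{\bR}\bigl|[\psi_\eta(D)][\widetilde T_5^\eta]f_{\pm}(t,x)\bigr|^2\,\ddd t
=2\pi\int_{\mathrm{piece}_{\pm}} \frac{|\psi_\eta(\xi)|^2\,\bigl|(\xi+\eta+(\xi+\eta)^3)/\eta^3\bigr|^{1/3}}{|\phi_\eta'(\xi)|}\,|\widehat{f_{\pm}}(\xi)|^2\,\ddd\xi,
\]
whose right-hand side is independent of $x$ because $|e^{ix\xi}|=1$. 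Integrating this identity against $\varphi(x)\,\ddd x$ thus contributes only the factor $\|\varphi\|_{L^1}<\infty$, and reduces the lemma to the uniform-in-$\eta\geq 100$ pointwise bound
\[
W_\eta(\xi):=\frac{|\psi_\eta(\xi)|^2\,\bigl|(\xi+\eta+(\xi+\eta)^3)/\eta^3\bigr|^{1/3}}{|\phi_\eta'(\xi)|}\lesssim 1 \quad \text{on } (-\eta,\infty)\setminus\{0\}.
\]

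The main obstacle, and the reason the precise form of $\psi_\eta$ matters, is verifying this pointwise symbol bound uniformly in $\eta$. Substituting the factorization of $\phi_\eta'$ reduces $W_\eta$ to an algebraic expression in $\xi$ and $u:=\xi+\eta$. At $\xi=0$ the cancellation between the $|\xi|$-factors from $\psi_\eta$ and from $\phi_\eta'$ is immediate. For large $\xi$ (say $\xi\gtrsim\eta$), the Strichartz amplitude $|(\xi+\eta+(\xi+\eta)^3)/\eta^3|^{1/3}$ grows at exactly the rate needed to match the $\xi^4/\eta^3$-type growth of $\phi_\eta'$, so $W_\eta\to 0$ at infinity. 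The genuinely delicate regime is near the left endpoint $\xi\to -\eta$ (equivalently $u\to 0$), where $\phi_\eta$ is essentially linear in $u$ with slope $-(5\eta+3\eta^{-1})$ and the singular weight $(\xi+\eta)^{-1/2}$ in $\psi_\eta$ must be balanced against the vanishing amplitude $|u(1+u^2)|^{1/6}$; this balance must be checked attentively to the $\eta$-dependence, possibly after an additional dyadic decomposition of the interval $u\in(0,1)$ in which the near-translation structure of $[\widetilde T_5^\eta]$ (at speed $\approx 5\eta$) contributes an extra factor of $\eta^{-1}$ from the $x$-integration against $\varphi$. Once $W_\eta\lesssim 1$ is established, summing the two monotonic pieces and using orthogonality of Fourier supports gives the claimed bound.
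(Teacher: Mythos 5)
Your overall route is genuinely different from the paper's and, where it works, cleaner. The paper expands the square, integrates in $t$ and $x$, and runs Schur's test on the kernel $\widehat{\varphi}(\widetilde{\xi}-\xi)\,|\xi|^{1/2}|\widetilde{\xi}|^{1/2}\,\delta[\phi_{\eta}(\xi)-\phi_{\eta}(\widetilde{\xi})]$ over the full (multivalued, at most five-point) level set $\mathcal{V}_{\xi,\eta}$, which obliges it to prove both a lower bound $|\partial_{\widetilde{\xi}}h_{\eta}(\widetilde{\xi})|\gtrsim|\widetilde{\xi}|\,\eta^{3}$ and the comparability $|\xi|\sim|\widetilde{\xi}|$ on level sets. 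Your factorization $\phi_{\eta}'(\xi)=\eta^{-3}\xi(\xi+2\eta)[5((\xi+\eta)^{2}+\eta^{2})+3]$ is correct, the branch decomposition at the unique critical point $\xi=0$ plus Plancherel in $t$ replaces the level-set analysis by a single diagonal weight $W_{\eta}$, and Cauchy--Schwarz between the two branches costs only a factor of $2$; you never need the off-diagonal comparability. For $u:=\xi+\eta\geq 1$ one indeed has
\[
W_{\eta}(\xi)=\frac{\eta^{3}\,u^{-2/3}(1+u^{2})^{1/3}}{(u+\eta)\,[5(u^{2}+\eta^{2})+3]}\leq \frac{2^{1/3}}{5},
\]
and the $|\xi|^{1/2}$ in $\psi_{\eta}$ exactly cancels the simple zero of $\phi_{\eta}'$ at $\xi=0$, as you say.

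The gap is the endpoint regime you flag yourself, and it cannot be repaired the way you suggest. For $\xi=-\eta+u$ with $0<u<1$ one has $\psi_{\eta}(\xi)\approx\eta u^{-1/2}$, amplitude $\approx u^{1/6}\eta^{-1/2}$ and $|\phi_{\eta}'(\xi)|\approx 5\eta$, so $W_{\eta}(\xi)\approx u^{-2/3}/5$, which is not uniformly bounded. The ``extra factor of $\eta^{-1}$ from the $x$-integration'' that you hope to extract from the translation speed $\approx 5\eta$ is already spent: it is precisely the Jacobian $1/|\phi_{\eta}'|\approx 1/(5\eta)$ appearing in your Plancherel identity, and no dyadic decomposition can produce it a second time. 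Worse, because your reduction is an exact identity on the decreasing branch, $\int\varphi\,|F_{-}|^{2}\,\ddd x\,\ddd t=(\int\varphi)\cdot c\int W_{\eta}|\widehat{f}|^{2}\,\ddd\xi$, testing against $\widehat{f}$ concentrated on $u\in(\epsilon,2\epsilon)$ shows the asserted inequality degenerates like $\epsilon^{-2/3}$ with $\psi_{\eta}$ as printed. So the obstruction is not in your method but in the symbol near $\xi=-\eta$: boundedness of $W_{\eta}$ there requires $\psi_{\eta}\lesssim\eta|\xi+\eta|^{-1/6}$, whereas the printed $\psi_{\eta}\approx\eta|\xi+\eta|^{-1/2}$ (the exponents on $|\eta|$ and $|\xi+\eta|$ appear to be swapped relative to what the estimate tolerates). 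The paper's own proof silently replaces the combined symbol $\psi_{\eta}(\xi)\,|(\xi+\eta+(\xi+\eta)^{3})/\eta^{3}|^{1/6}$ by $|\xi|^{1/2}$ in its Schur kernel, which is legitimate only for $|\xi+\eta|\geq 1$, so it passes over the same region. To complete your argument you must either correct the multiplier (or restrict the Fourier support to $|\xi+\eta|\geq 1$) and then record the elementary verification of $W_{\eta}\lesssim 1$ above; as written, the key pointwise bound your reduction rests on is false.
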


\begin{proof}[\textbf{Proof of Lemma \ref{L:Approximate local smooth-3,5}}]
	By applying the Plancherel theorem and Schur's test, we only need to prove
	\begin{equation} \label{E:Approximate local smooth-1}
		\sup_{\xi\geq -\eta} \int_{\bR} \widehat{\phi}(\widetilde{\xi}-\xi) |\xi|^{1/2} |\widetilde{\xi}|^{1/2} \delta \l[\phi_{\eta}(\xi)- \phi_{\eta}(\widetilde{\xi})\r] \ddd \widetilde{\xi} <\infty
	\end{equation}
	independently of $\eta$. For every $(\eta,\xi)\in \bR^2$ with $\xi\geq -\eta$, define the function
	\[h_{\eta}(\xi):= \xi^5 + 5\eta \xi^4 +10\eta^2\xi^3 + 10\eta^3\xi^2 + \xi^3 + 3\eta\xi^2\]
	and the set
	\[\mathcal{V}_{\xi,\eta}:=\l\{\widetilde{\xi}: \widetilde{\xi} \geq -\eta, h_{\eta}(\widetilde{\xi}) = h_{\eta}(\xi) \r\}.\]
	Notice that there are at most five points in $\mathcal{V}_{\xi,\eta}$ for each fixed $(\eta,\xi)$. Hence, a direct computation shows that
	\begin{align*}
		&\sup_{\xi\geq -\eta} \int_{\bR} \widehat{\phi}(\widetilde{\xi}-\xi) |\xi|^{1/2} |\widetilde{\xi}|^{1/2} \delta \l[\phi_{\eta}(\xi)- \phi_{\eta}(\widetilde{\xi})\r] \ddd \widetilde{\xi} \\
		&\leq \sup_{\xi\geq -\eta} \max_{\widetilde{\xi}\in \mathcal{V}_{\xi,\eta}} \frac{\|\widehat{\phi}\|_{L^{\infty}} |\xi|^{1/2}|\widetilde{\xi}|^{1/2} |\eta|^{3}}{\Big| \partial_{\widetilde{\xi}} h_{\eta}(\widetilde{\xi}) \Big|} \\
		&= \sup_{\xi\geq -\eta} \max_{\widetilde{\xi}\in \mathcal{V}_{\xi,\eta}} \frac{\|\widehat{\phi}\|_{L^{\infty}} |\xi|^{1/2}|\widetilde{\xi}|^{1/2} |\eta|^3}{5|\widetilde{\xi}| \l| \widetilde{\xi}^3+4\eta\widetilde{\xi}^2 +6 \eta^2\widetilde{\xi} +4\eta^3 +\frac{3\widetilde{\xi}}{5} +\frac{6\eta}{5} \r|}.
	\end{align*}
	Next, denoting the function
	\[\phi^{*}(\widetilde{\xi},\eta) := \widetilde{\xi}^3+4\eta\widetilde{\xi}^2 +6 \eta^2\widetilde{\xi} +4\eta^3 +\frac{3\widetilde{\xi}}{5} +\frac{6\eta}{5},\]
	we are going to show the following two estimates
	\begin{equation} \label{E:Approximate local smooth-2}
		|\phi^{*} (\widetilde{\xi},\eta)| \gtrsim |\eta|^{3}, \quad |\xi| \sim |\widetilde{\xi}|
	\end{equation}
	for all $\widetilde{\xi}\in \mathcal{V}_{\xi,\eta}$ with $\xi\geq -\eta$. These two estimates can directly give the desired upper bound for \eqref{E:Approximate local smooth-1} and thus complete the proof.
	
	Firstly, we will show that for all $\widetilde{\xi}\in \mathcal{V}_{\xi,\eta}$ with $\xi\geq -\eta$ there holds
	\[\l|\phi^{*} (\widetilde{\xi},\eta)/\eta^{3}\r| =\l|\l(\widetilde{\xi}^3+4\eta\widetilde{\xi}^2 +6 \eta^2\widetilde{\xi} +4\eta^3 +\frac{3\widetilde{\xi}}{5} +\frac{6\eta}{5}\r)/\eta^3\r| \gtrsim 1.\]
	Denote $\widetilde{y}:=\widetilde{\xi}/\eta$ and recall the Fourier support assumption $\widetilde{\xi}\geq -\eta$. We only need to investigate the case $\widetilde{y} \in [-1,0)$ since the case $\widetilde{y}\geq 0$ is obvious. Then by the assumption $\eta\geq 100$ we conclude
	\[\l|\widetilde{y}^3 + 4\widetilde{y}^2 + 6\widetilde{y} +4+ \frac{3\widetilde{y}}{5 \eta^2} + \frac{6}{5 \eta^2}\r| \gtrsim 1.\]
	
	Secondly, we will show that $|\xi/\widetilde{\xi}| \gg 1$ is impossible, which implies our desired result $|\widetilde{\xi}| \sim |\xi|$ by symmetry. We further denote $y:=\xi/\eta$, then the set $\mathcal{V}_{\xi,\eta}$ can be rewritten as
	\[\mathcal{V}_{\xi,\eta}= \l\{\widetilde{y}\geq -1: \widetilde{y} \neq 0,  h_{\eta}^{*}(y) - h_{\eta}^{*}(\widetilde{y}) =0\r\},\]
	where
	\begin{equation*}
		h_{\eta}^{*}(y):= y^5 +5y^4 +10 y^3 + 10y^2 +\frac{y^3}{\eta^2} + \frac{3y^2}{\eta^4}.
	\end{equation*}
	Thus, $|y/\widetilde{y}| \gg 1$ with $y>0$ will directly lead to a contradiction. Next, we consider the case $|y/\widetilde{y}|\gg1$ with $-1\leq y<0$. Indeed, due to these ranges of the parameters $(y,\widetilde{y}, \eta)$, there hold
	\[|h_{\eta}^{*}(y)| \geq y^2, \quad |h_{\eta}^{*}(y)| \leq 15y^2.\]
	Hence, the case $|y/\widetilde{y}|\gg1$ with $-1\leq y<0$ also leads to a contradiction. In summary, we obtain the desired conclusion that $|\xi/\widetilde{\xi}|\gg 1$ is impossible, and thus finish the proof.
\end{proof}

\begin{remark}\label{R:Approximate local smooth-3,5}
	Let $\varphi$ be a Schwartz function and $\eta\leq -100$. Then for arbitrary function $f\in L^2(\bR)$ whose Fourier support in $\{\xi \in \bR: \xi\leq -\eta\}$, we have
	\[\int_{\bR^2} \varphi(x) \Big|[\psi_{\eta}(D)] [\widetilde{T}_{5}^{\eta}]f(t,x)\Big|^2 \ddd x\ddd t\lesssim \|f\|_{L_x^2}^2.\]
\end{remark}

Based on the aforementioned local smoothing consequences Lemma \ref{L:Approximate local smooth-3,5} and Remark \ref{R:Approximate local smooth-3,5}, as well as the identity $[\widetilde{T}_{5}^{\eta}]f(t,x) =[\widetilde{T}_{5}^{-\eta}] \overline{f}(-t,-x)$, one can do exactly the same arguments as in \cite[Lemma 3.2 \& Lemma 4.4]{DY2022b} to deduce the following local convergence lemma for approximate operators. To avoid too much repetition, we omit the proof.
\begin{lemma}[Approximate local convergence \& pointwise convergence] \label{L:Approximate local convergence}
	For frequency parameters $(\xi_n) \subset \bR$ and a sequence of functions $(f_n) \subset L^2(\bR)$, if
	\[(\xi_n)\subset \bR_{+}, \quad \xi_n\to +\infty, \quad \supp \widehat{f}_n \subset \{\xi\in \bR: \xi \geq -\xi_n\}, \quad f_n\rhu 0\]
	or
	\[(\xi_n)\subset \bR_{-}, \quad \xi_n\to -\infty, \quad \supp \widehat{f}_n \subset \{\xi\in \bR: \xi \leq -\xi_n\}, \quad f_n\rhu 0,\]
	then up to subsequences we have
	\[[\widetilde{T}_{5}^n]f_n\to0\]
	strongly in $L_{loc, t,x}^2(\bR^2)$ and hence $[\widetilde{T}_{5}^n]f_n(t,x)\to 0$ almost everywhere in $\bR^2$. Here the approximate operator $[\widetilde{T}_{5}^n]$ is defined as in \eqref{E:Approximate operator-3,5} with $\xi_n$ substituting $N$.
\end{lemma}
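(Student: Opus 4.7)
The plan is to follow the two-step scheme from the authors' earlier work \cite{DY2022b}, combining the smoothing estimate Lemma \ref{L:Approximate local smooth-3,5} with the asymptotic Schrödinger description of Lemma \ref{L:Asymptotic Schrodinger-3,5}. First, the two hypotheses are interchanged by the identity $[\widetilde{T}_5^\eta]f(t,x)=[\widetilde{T}_5^{-\eta}]\overline{f}(-t,-x)$: setting $g_n(x):=\overline{f_n(-x)}$ and $\eta_n:=-\xi_n$ reduces the negative case to the positive one. So I may assume $\xi_n\to+\infty$ with $\supp\widehat{f}_n\subset\{\xi\geq-\xi_n\}$ and $f_n\rhu 0$. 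Fix $\varphi\in C_c^\infty(\bR^2)$; the aim is to show $\|\varphi\cdot[\widetilde{T}_5^n]f_n\|_{L^2_{t,x}}\to 0$, after which the pointwise a.e.\ convergence follows by a diagonal extraction over an exhaustion of $\bR^2$ by compact sets.

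Next, split $f_n=P_{\leq R}f_n+P_{>R}f_n$ via a smooth Fourier cutoff at scale $R\gg 1$. For the low-frequency piece, $P_{\leq R}f_n\rhu 0$ in $L^2$ with Fourier support in the fixed compact interval $[-R,R]$; by Lemma \ref{L:Asymptotic Schrodinger-3,5}, $[\widetilde{T}_5^n]P_{\leq R}f_n$ agrees asymptotically with the Schrödinger evolution $[e^{10it\Delta}]P_{\leq R}f_n$ in $L^6_{t,x}$ (and hence in $L^2(\supp\varphi)$), while the latter converges to $0$ locally by Rellich–Kondrachov applied to the equicontinuous family of Schrödinger evolutions with frequency support in $[-R,R]$. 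For the high-frequency piece, apply Lemma \ref{L:Approximate local smooth-3,5} dyadically in $|\xi|\sim 2^k$: since $\psi_{\xi_n}(\xi)\gtrsim 2^{k/2}$ on $\{|\xi|\sim 2^k\}$ away from the caustic $\xi=-\xi_n$, inverting the Fourier multiplier yields $\|\varphi[\widetilde{T}_5^n]P_{|\xi|\sim 2^k}f_n\|_{L^2_{t,x}}\lesssim 2^{-k/2}\|f_n\|_{L^2}$, which sums to $O(R^{-1/2})$; sending first $n\to\infty$ and then $R\to\infty$ produces the claim.

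The main obstacle is the caustic frequency window $\{\xi:|\xi+\xi_n|\ll\xi_n\}$, where the smoothing weight $\psi_{\xi_n}$ degenerates and the Fourier-multiplier inversion above breaks down. The resolution, imported from \cite[Lemma 3.2 \& Lemma 4.4]{DY2022b}, is to rescale $\xi=-\xi_n+\tau$ with $\tau=O(1)$, turning the caustic contribution into an oscillatory integral in a bounded frequency variable; the weak convergence $f_n\rhu 0$, combined with the compactness of $L^2$-bounded sequences whose Fourier transforms concentrate in a fixed compact set after the shift, then forces this piece to vanish locally as well. This is the only place where the hypothesis $\xi_n\to\infty$ (rather than merely $|\xi_n|\geq 1$) enters in an essential way.
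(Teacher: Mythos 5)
Your overall scheme is the one the paper intends: the proof is omitted there and delegated precisely to the conjugation identity $[\widetilde{T}_{5}^{\eta}]f(t,x)=[\widetilde{T}_{5}^{-\eta}]\overline{f}(-t,-x)$, the local smoothing estimates of Lemma \ref{L:Approximate local smooth-3,5} and Remark \ref{R:Approximate local smooth-3,5}, and the arguments of \cite{DY2022b} — i.e., reduce to $\xi_n\to+\infty$, treat the compact-frequency part of $f_n$ by the asymptotic Schr\"odinger description plus weak convergence, and kill the high-frequency tail uniformly in $n$ via local smoothing. One small point there: Lemma \ref{L:Asymptotic Schrodinger-3,5} is stated for a fixed $f$, while you apply it to the $n$-dependent functions $P_{\leq R}f_n$; you need the operator-norm version on $\{g:\supp\widehat{g}\subset[-R,R]\}$, which the stationary-phase proof does furnish but which should be stated.

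The substantive problem is that your reading of the weight is backwards. With $\psi_{\eta}(\xi)=|\eta|^{1/2}|\xi|^{1/2}|\xi+\eta|^{-1/2}$, the weight does \emph{not} degenerate at the caustic $\xi=-\xi_n$; it blows up there. Its only zero is at $\xi=0$, which lies inside the compact-frequency piece you already handle via the Schr\"odinger limit. On the whole high-frequency support $\{|\xi|\geq R\}\cap\{\xi\geq-\xi_n\}$ one checks $\psi_{\xi_n}(\xi)\gtrsim\min(|\xi|,\xi_n)^{1/2}\geq \min(R,\xi_n)^{1/2}$, which is $\gtrsim R^{1/2}$ once $\xi_n\geq R$; so a \emph{single} application of Lemma \ref{L:Approximate local smooth-3,5} to $P_{|\xi|>R}f_n$ gives the $O(R^{-1/2})$ bound, caustic window included. (Note also that your dyadic claim $\psi_{\xi_n}\gtrsim 2^{k/2}$ fails for $2^k\gg\xi_n$, where one only gets $\xi_n^{1/2}$, and the resulting sum over those blocks need not converge — another reason to apply the estimate once to $P_{>R}f_n$ rather than block by block.) This matters because the separate caustic argument you sketch would not close as stated: after shifting frequency by $\xi_n$ the caustic piece is $e^{ix\xi_n}P_{|\xi+\xi_n|\leq C}f_n$, whose weak limit is not controlled by $f_n\rightharpoonup 0$ (take $f_n=e^{-ix\xi_n}\phi$), and an $L^2$-bounded sequence with Fourier support in a fixed compact set is not precompact in $L^2$ (translates of a bump). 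Deleting that step and letting the multiplier inversion run across the caustic repairs the proof.
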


\subsection{Refined Strichartz} \label{SS:Refined Strichartz}
Besides the approximate local convergence Lemma \ref{L:Approximate local convergence} above, another main building block for the profile decomposition Theorem \ref{T:Profile decomposition-3,5} is a refinement of Strichartz estimate, see Proposition \ref{P:Refined Strichartz} below. To begin with, we introduce some dyadic analysis terminologies and basic properties.
\begin{definition}[Dyadic intervals]
	An interval $\tau\subset \bR$ is \textit{dyadic} if it can be written as $\tau=[k,k+1)2^{j}$ with $k\in \bZ$ and $j \in \bZ$. For any interval $\tau$, we denote by $|\tau|$ its length, and $c(\tau)$ or $c_{\tau}$ its center.
\end{definition}
\begin{definition}[Adjacent]
	Two intervals are said to be \textit{adjacent} if they have the same length and share an extremity. For two dyadic intervals $\tau$ and $\tau'$, we write $\tau\sim \tau'$ if $|\tau|=|\tau'|$ and they are not adjacent, their parents are not adjacent, their $2$-parents (grandparents) are not adjacent, ..., their $(N_{\ell}-1)$-parents are not adjacent, but their $N_{\ell}$-parents are adjacent. Here the number $N_{\ell} \in \bN$ will be determined later in \eqref{E:Adjacent number}.
\end{definition}

\begin{lemma} \label{L:Dyadic intervals}
	Assume two dyadic intervals $\tau\sim \tau'$ with either $\tau,\tau' \subset \bR_{+}$ or $\tau,\tau' \subset \bR_{-}$. Then, we have the following properties:
	\begin{enumerate}[(1)]
		\item for any $\xi \in \tau$, there holds $|\xi| \leq 2|c(\tau)|$; \label{L:Dyadic intervals-1}
		\item there holds $|c(\tau')|\leq (2^{N_{\ell}+2}-1) |c(\tau)|$; \label{L:Dyadic intervals-2}
		\item for any $\xi\in\tau$ and $\xi'\in \tau'$, there holds $\frac{2^{N_{\ell}}}{2^{N_{\ell}}+1} |c(\tau+\tau')| \leq |\xi+\xi'| \leq \frac{2^{N_{\ell}}+2}{2^{N_{\ell}}+1}|c(\tau+\tau')|$; \label{L:Dyadic intervals-3}
		\item for any $\xi\in\tau$ and $\xi'\in\tau'$, there holds $2^{N_{\ell}-1} |\tau|\leq |\xi-\xi'| \leq 2^{N_{\ell}+1} |\tau|$. \label{L:Dyadic intervals-4}
	\end{enumerate}
\end{lemma}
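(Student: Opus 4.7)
My approach would be to reduce to the $\bR_{+}$ case by the reflection symmetry $x \mapsto -x$, so assume $\tau,\tau' \subset \bR_{+}$; write $L = |\tau| = |\tau'|$ and $\tau = [kL,(k+1)L)$ with nonnegative integer $k$. Then (1) is a direct computation: $c(\tau) = (k+\tfrac12)L \geq L/2$ and $|\xi| \leq (k+1)L \leq 2c(\tau)$ for $\xi \in \tau$, yielding in particular the consequence $L \leq 2|c(\tau)|$ that I will reuse. For (2), I would use that the $N_{\ell}$-parents are two adjacent dyadic intervals of length $2^{N_{\ell}}L$ to bound the center-to-center distance by the width of their union minus the two half-lengths of $\tau,\tau'$ sticking inward: $|c(\tau)-c(\tau')| \leq 2^{N_{\ell}+1}L - L = (2^{N_{\ell}+1}-1)L$. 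Combining with $L \leq 2|c(\tau)|$ from (1) then gives $|c(\tau')| \leq |c(\tau)| + 2(2^{N_{\ell}+1}-1)|c(\tau)| = (2^{N_{\ell}+2}-1)|c(\tau)|$.

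For (3) and (4), my plan is to parametrize the configuration. Let $A$ be the common endpoint of the two $N_{\ell}$-parents, necessarily a nonnegative integer multiple of $2^{N_{\ell}}L$; since the left $N_{\ell}$-parent lies in $\bR_{+}$, one automatically has $A \geq 2^{N_{\ell}}L$. WLOG take $\tau \subset [A-2^{N_{\ell}}L, A]$ and $\tau' \subset [A, A+2^{N_{\ell}}L]$, and let $d_{1}, d_{2} \in \{0, L, \ldots, (2^{N_{\ell}}-1)L\}$ denote respectively the distance from the right endpoint of $\tau$ to $A$ and from $A$ to the left endpoint of $\tau'$. The key translation step is: the $(N_{\ell}-1)$-parent of $\tau$ is the left half of its $N_{\ell}$-parent iff $d_{1} \geq 2^{N_{\ell}-1}L$, and similarly for $\tau'$. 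Thus the two $(N_{\ell}-1)$-parents are adjacent (necessarily meeting at $A$) precisely when $\max(d_{1},d_{2}) < 2^{N_{\ell}-1}L$, so the "$(N_{\ell}-1)$-parents not adjacent" clause of $\sim$ becomes the single inequality $\max(d_{1},d_{2}) \geq 2^{N_{\ell}-1}L$.

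Given this setup, (3) falls out from the identity $c(\tau+\tau') = c(\tau)+c(\tau') = 2A + (d_{2}-d_{1}) \geq 2A - (2^{N_{\ell}}-1)L \geq (2^{N_{\ell}}+1)L$, where only $A \geq 2^{N_{\ell}}L$ is used, together with the elementary pointwise bound $|(\xi+\xi') - c(\tau+\tau')| \leq L$ for $\xi \in \tau, \xi' \in \tau'$. Rearranging $L \leq c(\tau+\tau')/(2^{N_{\ell}}+1)$ yields the stated two-sided ratio on $|\xi+\xi'|$. For (4), one has $|\xi-\xi'| = \xi'-\xi \in [d_{1}+d_{2},\, d_{1}+d_{2}+2L]$; the lower bound $d_{1}+d_{2} \geq \max(d_{1},d_{2}) \geq 2^{N_{\ell}-1}L$ is exactly the parametrized $(N_{\ell}-1)$-condition, while the upper bound $d_{1}+d_{2}+2L \leq 2(2^{N_{\ell}}-1)L + 2L = 2^{N_{\ell}+1}L$ is immediate from the ranges of $d_{1}, d_{2}$.

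The only real bookkeeping is the translation step — converting the nested adjacency conditions in $\sim$ into the single inequality $\max(d_{1},d_{2}) \geq 2^{N_{\ell}-1}L$ — and noticing that the positivity $\tau,\tau' \subset \bR_{+}$ forces $A \geq 2^{N_{\ell}}L$ (which keeps $c(\tau+\tau')$ uniformly away from $0$ and makes the ratio bound in (3) meaningful). The remaining clauses of $\sim$ ($k$-parents not adjacent for $k < N_{\ell}-1$) are automatically consistent with the parametrization and play no direct role in the four estimates.
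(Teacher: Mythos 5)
The paper states Lemma \ref{L:Dyadic intervals} without proof, so there is no argument to compare against; your proposal supplies the missing verification and it is correct. The reduction to $\bR_{+}$, the observation that the positivity of the left $N_{\ell}$-parent forces $A\geq 2^{N_{\ell}}|\tau|$, and the translation of the ``$(N_{\ell}-1)$-parents not adjacent'' clause into $\max(d_1,d_2)\geq 2^{N_{\ell}-1}|\tau|$ are exactly the points that need checking, and each of the four estimates then follows from your parametrization by elementary arithmetic.
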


To establish the desired refined Strichartz Proposition \ref{P:Refined Strichartz}, one key observations is the following geometric finite-cover property for the homogeneous odd curve $\xi^{\ell}$ case.
\begin{lemma}[Finite-cover property: odd curve] \label{L:Geometric quasi-orthogonal}
	For arbitrary odd integer $\ell>1$ and arbitrary pair $(\tau,\tau') \subset \bR_{+}^2$, denote the following bilinear Fourier support set
	\[S_{\ell}(\tau,\tau'):= \l\{\l(\xi+\xi', \xi^{\ell} +(\xi')^{\ell}\r): \xi\in \tau, \xi'\in \tau'\r\}.\]
	Then we can associate each interval $J \subset \bR_{+}$ with a parallelogram $R(J)\subset \bR^2$ such that
	\begin{equation} \label{E:Quasi-orthogonal-1}
		S_{\ell}(\tau,\tau') \subset R(\tau+\tau'), \quad \forall \tau\sim \tau';
	\end{equation}
	moreover, we can find a small constant $\beta_{\ell}>0$ and a big number $\widetilde{C}_{\ell} \in \bN$ which satisfy
	\begin{equation} \label{E:Quasi-orthogonal-2}
		\#\{(\tau_1,\tau'_1): \tau_1\sim \tau'_1, (1+\beta) R(\tau+\tau') \cap (1+\beta) R(\tau_1 +\tau'_1) \neq \emptyset\} \leq \widetilde{C}_{\ell}, \quad \forall \tau\sim\tau'.
	\end{equation}
	Here $(1+\beta) R(J)$ denotes the $(1+\beta)$-dilate of the parallelogram $R(J)$.
\end{lemma}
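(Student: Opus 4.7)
The proof splits naturally into three parts: (i) explicit construction of the parallelograms $R(J)$, (ii) verification of the containment \eqref{E:Quasi-orthogonal-1} via a symmetric-power expansion, and (iii) verification of the finite-cover property \eqref{E:Quasi-orthogonal-2} via projection arguments.

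\emph{Construction of $R(J)$.} Let $\gamma(u):=u^{\ell}/2^{\ell-1}=2(u/2)^{\ell}$ denote the ``diagonal curve'' produced by setting $\xi=\xi'$ in the definition of $S_{\ell}$. For an interval $J\subset\bR_{+}$ with center $c_{J}$, I would take $R(J)$ to be the parallelogram centered at $(c_{J},\gamma(c_{J}))$ with one side of length $C_{1}|J|$ along the tangent direction $(1,\gamma'(c_{J}))$ to $\gamma$ at $c_{J}$ and the other side of length $C_{2}\,c_{J}^{\ell-2}|J|^{2}$ along the vertical direction $(0,1)$. The constants $C_{1},C_{2}$, depending only on $\ell$ and $N_{\ell}$, are pinned down once the containment is checked.

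\emph{Containment via symmetric-power expansion.} Take $\tau\sim\tau'$, $(\xi,\xi')\in\tau\times\tau'$, and write $u=\xi+\xi'$, $d=(\xi-\xi')/2$, $J=\tau+\tau'$. Since $\ell$ is odd, the odd powers of $d$ cancel, leaving
\[\xi^{\ell}+(\xi')^{\ell}=2\sum_{j=0}^{(\ell-1)/2}\binom{\ell}{2j}(u/2)^{\ell-2j}d^{2j}.\]
By Lemma \ref{L:Dyadic intervals} we have $u\in J$, $|u|\sim c_{J}$ and $|d|\lesssim 2^{N_{\ell}}|J|$. Taylor-expanding $\gamma$ at $c_{J}$ and combining with these $d^{2j}$ corrections, the deviation $|(\xi^{\ell}+(\xi')^{\ell})-\gamma(c_{J})-\gamma'(c_{J})(u-c_{J})|$ is dominated, termwise, by a constant depending on $\ell,N_{\ell}$ times $c_{J}^{\ell-2}|J|^{2}$. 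Choosing $C_{1},C_{2}$ large enough absorbs all implicit constants and gives $S_{\ell}(\tau,\tau')\subset R(J)$.

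\emph{Finite cover via projections.} Fix $(\tau,\tau')$ and consider any $(\tau_{1},\tau_{1}')$ with $\tau_{1}\sim\tau_{1}'$ whose $(1+\beta_{\ell})$-dilated parallelogram meets $(1+\beta_{\ell})R(\tau+\tau')$. Projection onto the $u$-axis forces $|c_{J_{1}}-c_{J}|\lesssim|J|+|J_{1}|$. Projection onto the vertical direction, combined with $\gamma''(c_{J})\sim\ell(\ell-1)c_{J}^{\ell-2}$ measuring how quickly the tangent line at $c_{J}$ drifts away from $\gamma(c_{J_{1}})$, then yields $(c_{J_{1}}-c_{J})^{2}\lesssim|J|^{2}+|J_{1}|^{2}$ (with comparable centers $c_{J}\sim c_{J_{1}}$), so that no pair can escape far along the curve. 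The constraints $\tau_{1}\sim\tau_{1}'$ and $\tau_{1},\tau_{1}'\subset\bR_{+}$ also supply a lower bound $c_{J_{1}}\gtrsim 2^{N_{\ell}}|J_{1}|$. Combining all three bounds confines $(|J_{1}|,c_{J_{1}})$ to a bounded set of dyadic scales; within each scale, the dyadic lattice on which $c_{\tau_{1}},c_{\tau_{1}'}$ live, together with $\tau_{1}\sim\tau_{1}'$, leaves only $O_{\ell,N_\ell}(1)$ admissible pairs. Summing over the finitely many admissible scales produces the universal bound $\widetilde{C}_{\ell}$.

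\emph{Main obstacle.} The delicate step is the finite-cover verification, since in principle a small-scale parallelogram can nest inside a large one. What saves the argument is the interplay between the dyadic-lattice spacing at scale $|J_{1}|$ (which produces many candidate positions) and the Whitney constraint $c_{J_{1}}\gtrsim 2^{N_{\ell}}|J_{1}|$ (which rules out arbitrarily small scales clustering near a fixed $c_{J}$); reconciling these to extract a clean universal bound is the crux of the proof.
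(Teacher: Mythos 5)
Your construction of $R(J)$ contains a flaw that makes the finite-cover property \eqref{E:Quasi-orthogonal-2} actually false for your parallelograms, not merely unproven. You center $R(J)$ at the point $(c_J,\gamma(c_J))$ on the diagonal curve $\gamma(u)=u^{\ell}/2^{\ell-1}$, so $R(J)$ (and a fortiori its dilate) contains the arc of $\gamma$ over $J$ once $C_2$ is large enough for the containment to hold. Now fix one pair $\tau\sim\tau'$ and consider pairs $\tau_1\sim\tau_1'$ at arbitrarily small dyadic scales $|\tau_1|=2^{-m}$ with $c(\tau_1+\tau_1')$ near $c(\tau+\tau')$: the Whitney constraint only requires $c(\tau_1+\tau_1')\gtrsim 2^{N_{\ell}}|\tau_1|$, which is satisfied for all large $m$ since $c(\tau+\tau')$ is fixed, and each such $R(\tau_1+\tau_1')$ contains its own curve point $(c_{J_1},\gamma(c_{J_1}))\in (1+\beta)R(\tau+\tau')$. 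This produces infinitely many intersecting pairs. Correspondingly, the final step of your finite-cover argument does not close: your three bounds ($|c_{J_1}-c_J|\lesssim |J|+|J_1|$, the curvature bound, and $c_{J_1}\gtrsim 2^{N_{\ell}}|J_1|$) give no lower bound on $|J_1|$ in terms of $|J|$, so they do not confine $|J_1|$ to finitely many scales. You correctly identified nesting of a small parallelogram inside a large one as the crux, but your setup does not rule it out.

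The missing idea is a quantitative \emph{lower} bound on the distance from $S_{\ell}(\tau,\tau')$ to the diagonal curve. For $\xi,\xi'>0$ one has the identity
\begin{equation*}
\xi^{\ell}+(\xi')^{\ell}-\frac{(\xi+\xi')^{\ell}}{2^{\ell-1}}=\frac{1}{2^{\ell-1}}(\xi-\xi')^{2}(\xi+\xi')P_{(\ell-1)/2}(\xi,\xi')\sim_{\ell} (\xi-\xi')^{2}(\xi+\xi')^{\ell-2}\gtrsim 2^{2N_{\ell}}|\tau|^{2}c(\tau+\tau')^{\ell-2},
\end{equation*}
where positivity of the coefficients of $P_k$ is what makes the two-sided comparison work; this is the paper's identity \eqref{E:Identity}. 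One then chooses $N_{\ell}$ so large (condition \eqref{E:Adjacent number}) that this separation strictly dominates the Taylor error $\ell(\ell-1)c^{\ell-2}|\eta-c|^{2}/2^{\ell}$ incurred when replacing $\gamma$ by its tangent line at $c$, and one defines $R(J)$ as the band $\{\eta\in J,\ A_{\ell}\leq G(\omega,\eta)\leq B_{\ell}\}$ with $A_{\ell}>0$, i.e.\ a parallelogram lying \emph{strictly above} the tangent line at normalized height comparable to its own thickness $c^{\ell-2}|\tau|^{2}$. Then every point of $(1+\beta_{\ell})R(\tau+\tau')$ has vertical distance to $\gamma$ two-sidedly comparable to $c^{\ell-2}|\tau|^{2}$ (this is where \eqref{E:Quasi-orthogonal-8}--\eqref{E:Quasi-orthogonal-9} enter), so intersection of two dilated parallelograms forces $c^{\ell-2}|\tau|^{2}\sim c_1^{\ell-2}|\tau_1|^{2}$; combined with $c\sim c_1$ from the horizontal projection this yields $|\tau|\sim|\tau_1|$ and hence the finite bound $\widetilde{C}_{\ell}$. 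Your symmetric-power expansion in the containment step is essentially the same computation as the paper's identity, but you only used it for an upper bound; the lower bound, and the consequent offset placement of $R(J)$ away from the curve, is what the argument cannot do without.
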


\begin{remark} 
	Given a fixed interval $\tau$, there always holds $c_{\tau} \sim_{\ell} c_{\tau'}$ for all intervals $\tau'\sim \tau$, see \eqref{E:Quasi-orthogonal-7} below. Thus for fixed $\tau$, the main idea for constructing the desired parallelogram $R(\tau+\tau')$ is to construct a linear function $K_{c_\tau,1} (\xi+\xi') + K_{c_\tau,0}$ such that
	\[\l|\l[\xi^{\ell} + (\xi')^{\ell} \r]- K_{c_\tau,1} (\xi+\xi') - K_{c_\tau,0}\r| \sim_{c_\tau} |\tau|^{\beta}, \quad \forall (\xi,\xi') \in \tau \times \tau' \quad \text{with} \quad \tau\sim \tau';\]
    and then define $R(\tau+\tau')$ as
    \[\l\{(\omega,\eta): \eta\in \tau+\tau', \;\; A_{c_{\tau},\ell} \leq \frac{\omega - K_{c_\tau,1} \eta - K_{c_\tau,0}}{|\tau|^{\beta}} \leq B_{c_{\tau},\ell}\r\}.\]
	In our proof below, we will take $\beta=2$ and then show the desired finite-cover property. 
\end{remark}

\begin{proof}[\textbf{Proof of Lemma \ref{L:Geometric quasi-orthogonal}}]
	Due to some technical reasons, we present an identity first. For a positive integer $k\in \bN$ and $(x,y) \in \bR^2$, one can check the following identity
	\begin{equation} \label{E:Identity}
		x^{2k+1}+y^{2k+1}-(x+y)^{2k+1}/2^{2k} =\frac{1}{2^{2k}} (x-y)^2(x+y) P_k(x,y),
	\end{equation}
	where the function $P_{k}(x,y)$ is defined by
	\[P_{k}(x,y):=\sum_{n=0}^{2k-2} b_n x^{2k-2-n} y^n,\]
	with the coefficients $b_0=2^{2k}-1$, $b_1=b_0-\binom{1}{2k+1}$ and
	\[b_{2i}=b_{2i-2}+b_0- \sum_{m=1}^{2i} \binom{m}{2k+1}, \quad b_{2i+1}=b_{2i}- \sum_{m=0}^i \binom{2m+1}{2k+1}, \quad i=1,2,\ldots, k.\]
	It is not hard to see that $b_n>0$ for all $n=1,2,\ldots, 2k+1$ and\footnote{Here $\lfloor k/2\rfloor$ denotes the floor function of the number $k/2$.}
	\[b_{2\lfloor k/2\rfloor}=\max \{b_n: n=1,2,\ldots, 2k+1\},\]
	which implies the following key estimate
	\[b_{2\lfloor k/2 \rfloor}^{-1} (x+y)^{2k-2} \leq P_k(x,y) \leq b_{2\lfloor k/2 \rfloor} (x+y)^{2k-2}, \quad \forall (x,y)\in \bR_{+}^2.\]
	Then we consider $\ell=2k+1$ and choose $N_{\ell}$ large enough such that
	\begin{equation} \label{E:Adjacent number}
		b_{2\lfloor (\ell-1)/4 \rfloor}^{-1} 2^{2N_{\ell}-\ell-3} \l(\frac{2^{N_{\ell}}}{2^{N_{\ell}}+1}\r)^{\ell-2} > 2^{\ell} \ell(\ell-1).
	\end{equation}
	This inequality can be achieved as long as $N_{\ell}$ is sufficiently large, and it is crucial for establishing the inequality \eqref{E:Quasi-orthogonal-8} below.

	Now let us construct the desired parallelogram which satisfies \eqref{E:Quasi-orthogonal-1}. By symmetry we may assume $|\xi'|< |\xi|$. For a parameter $\xi'$, we define the following function
	\[F_{\xi'}(\xi):=\xi^{\ell}+(\xi')^{\ell}-(\xi+\xi')^{\ell}/2^{\ell-1}.\]
	Based on the aforementioned identity \eqref{E:Identity} with $\ell=2k+1$, a direct computation shows
	\begin{equation*}
		\frac{1}{2^{\ell-1} b_{2\lfloor (\ell-1)/4 \rfloor}} (\xi-\xi')^2 (\xi+\xi')^{\ell-2} \leq F_{\xi'}(\xi) \leq \frac{b_{2\lfloor (\ell-1)/4 \rfloor}}{2^{\ell-1}} (\xi-\xi')^2(\xi+\xi')^{\ell-2}.
	\end{equation*}
	If $(\xi,\xi')\in \tau\times \tau'$ with $\tau\sim \tau'$ and $|\xi'|<|\xi|$, then the Lemma \ref{L:Dyadic intervals} further implies
	\begin{equation} \label{E:Quasi-orthogonal-3}
		F_{\xi'}(\xi) \geq b_{2\lfloor (\ell-1)/4 \rfloor}^{-1} 2^{2N_{\ell}-\ell-1} \l(\frac{2^{N_{\ell}}}{2^{N_{\ell}}+1}\r)^{\ell-2} |\tau|^2 c(\tau+\tau')^{\ell-2} 
	\end{equation}
	and
	\begin{equation} \label{E:Quasi-orthogonal-4}
		F_{\xi'}(\xi) \leq b_{2\lfloor (\ell-1)/4 \rfloor} 2^{2N_{\ell}-\ell+1} \l(\frac{2^{N_{\ell}}+2}{2^{N_{\ell}}+1}\r)^{\ell-2} |\tau|^2 c(\tau+\tau')^{\ell-2}.
	\end{equation}
	On the other hand, if we write
	\[\omega:=\xi^{\ell}+ (\xi')^{\ell}, \quad \eta:=\xi+\xi', \quad \bar{\tau}:=\tau+\tau', \quad c:=c(\bar{\tau})>0,\]
	then Taylor's theorem asserts the existence of $c_{\theta}$ between $c$ and $\eta$ such that
	\[|\eta|^{\ell-1}\eta =c^{\ell} +\ell c^{\ell-1}(\eta-c) +\frac{\ell(\ell-1)}{2} c_{\theta}^{\ell-2} |\eta-c|^2.\]
	Hence, by the estimates \eqref{E:Quasi-orthogonal-3} and \eqref{E:Quasi-orthogonal-4} we know that
	\[b_{2\lfloor (\ell-1)/4 \rfloor}^{-1} 2^{2N_{\ell}-\ell-1} \l(\frac{2^{N_{\ell}}}{2^{N_{\ell}}+1}\r)^{\ell-2} |\tau|^2 c^{\ell-2} \leq \omega-\frac{c^{\ell}+ \ell c^{\ell-1}(\eta-c)}{2^{\ell-1}} -\frac{\ell(\ell-1)c_{\theta}^{\ell-2} |\eta-c|^2}{2^{\ell}}\]
	and
	\[\omega-\frac{c^{\ell}+ \ell c^{\ell-1}(\eta-c)}{2^{\ell-1}} -\frac{\ell(\ell-1)c_{\theta}^{\ell-2} |\eta-c|^2}{2^{\ell}} \leq b_{2\lfloor (\ell-1)/4 \rfloor} 2^{2N_{\ell}-\ell+1} \l(\frac{2^{N_{\ell}}+2}{2^{N_{\ell}}+1}\r)^{\ell-2} |\tau|^2 c^{\ell-2}.\]
	Then we estimate the term $\omega-\frac{c^{\ell}+ \ell c^{\ell-1}(\eta-c)}{2^{\ell-1}}$, by Lemma \ref{L:Dyadic intervals} we conclude
	\begin{align} \label{E:Quasi-orthogonal-5}
		\omega-\frac{c^{\ell}+ \ell c^{\ell-1}(\eta-c)}{2^{\ell-1}} \geq b_{2\lfloor (\ell-1)/4 \rfloor}^{-1} 2^{2N_{\ell}-\ell-1} \l(\frac{2^{N_{\ell}}}{2^{N_{\ell}}+1}\r)^{\ell-2}  c^{\ell-2}|\tau|^2 =: A_{\ell}
	\end{align}
	and
	\begin{align} \label{E:Quasi-orthogonal-6}
		\omega-\frac{c^{\ell}+ \ell c^{\ell-1}(\eta-c)}{2^{\ell-1}} \leq \l[b_{2\lfloor (\ell-1)/4 \rfloor} 2^{2N_{\ell}-\ell+1} \l(\frac{2^{N_{\ell}}+2}{2^{N_{\ell}}+1}\r)^{\ell-2} + 3^{\ell-2} \ell(\ell-1)\r] c^{\ell-2} |\tau|^2 =:B_{\ell}.
	\end{align}
	Now the desired parallelogram can be given as follows. For any fixed interval $J \subset \bR_{+}$ with center $c(J)$ and length $|J|$, we define the linear function
    \begin{equation} \label{E:Quasi-orthogonal-6.5}
        G(\omega,\eta):= \frac{\omega-\frac{c(J)^{\ell}+ \ell c(J)^{\ell-1}[\eta-c(J)]}{2^{\ell-1}}}{c(J)^{\ell-2}|J|^2/4}
    \end{equation}
	and the associated parallelogram
	\[R(J):=\l\{(\omega,\eta): \eta\in J, A_{\ell} \leq G(\omega,\eta) \leq B_{\ell}\r\}.\]
	By the estimates \eqref{E:Quasi-orthogonal-5} and \eqref{E:Quasi-orthogonal-6}, we know that the parallelogram satisfies our condition \eqref{E:Quasi-orthogonal-1}.
	
	Finally, let us show the existence of $\beta_{\ell}$ and $\widetilde{C}_{\ell}$ such that the desired condition \eqref{E:Quasi-orthogonal-2} holds. Indeed, a direct computation shows $(1+\beta) R(\bar{\tau})$ is the following{\footnotesize
	\[\l\{(\omega,\eta): \eta\in \l[c_{\bar{\tau}}- \frac{(1+\beta)|\bar{\tau}|}{2}, c_{\bar{\tau}}+ \frac{(1+\beta)|\bar{\tau}|}{2} \r], A_{\ell}-(B_{\ell}-A_{\ell})\beta/2 \leq G(\omega,\eta) \leq B_{\ell}+ (B_{\ell}-A_{\ell})\beta/2\r\}.\]}
	Define $\bar{\tau}_1:=\tau_1+\tau'_1$ and let the point $(\widetilde{\omega},\widetilde{\eta})$ belong to the intersection $(1+\beta) R(\bar{\tau}) \cap (1+\beta) R(\bar{\tau}_1)$. Similar to the estimate \eqref{L:Dyadic intervals-3} in Lemma \ref{L:Dyadic intervals}, we have
	\[\frac{2^{N_{\ell}}-\beta}{2^{N_{\ell}}+1} \widetilde{c} \leq \eta \leq \frac{2^{N_{\ell}}+2+\beta}{2^{N_{\ell}}+1} \widetilde{c},\]
	where $\widetilde{c}$ denotes either $c(\bar{\tau})$ or $c(\bar{\tau}_1)$. Therefore, we have
	\begin{equation} \label{E:Quasi-orthogonal-7}
		\frac{2^{N_{\ell}}-\beta}{2^{N_{\ell}}+2+\beta} c(\bar{\tau}) \leq c(\bar{\tau}_1) \leq \frac{2^{N_{\ell}}+2+\beta}{2^{N_{\ell}}-\beta} c(\bar{\tau}).
	\end{equation}
	On the other hand, by Taylor's theorem, we can obtain
	\[\widetilde{\omega}-|\widetilde{\eta}|^{\ell-1}\widetilde{\eta}/2^{\ell-1} = \widetilde{\omega}-\frac{\widetilde{c}^{\ell}+\ell \widetilde{c}^{\ell-1} (\widetilde{\eta}-\widetilde{c})}{2^{\ell-1}} -\frac{\ell(\ell-1) \widetilde{c}_{\theta}^{\ell-2} (\widetilde{\eta}-\widetilde{c})^2}{2^{\ell}},\]
	where $\widetilde{c}_{\theta}$ is between $\widetilde{c}$ and $\widetilde{\eta}$. Let $\widetilde{\tau}$ denote $\bar{\tau}$ or $\bar{\tau}_1$. Then the definition of $(1+\beta) R(\widetilde{\tau})$ implies
	\begin{align*}
		\widetilde{\omega}-|\widetilde{\eta}|^{\ell-1}\widetilde{\eta}/2^{\ell-1} &\geq \l[A_{\ell}-\frac{\beta(B_{\ell}-A_{\ell})}{2}\r] \frac{\widetilde{c}^{\ell-2} |\widetilde{\tau}|^2}{4} -\frac{\ell(\ell-1)}{2^{\ell}} \Big(\widetilde{c}+(1+\beta)|\tau|\Big)^{\ell-2} (1+\beta)^2 |\tau|^2 \\
		&\geq \l[A_{\ell}- \frac{\beta(B_{\ell}-A_{\ell})}{2} - \frac{\ell (\ell-1)}{2^{\ell}} (3+2\beta)^{\ell-2} (1+\beta)^2\r] \widetilde{c}^{\ell-2} |\tau|^2
	\end{align*}
	and
	\begin{align*}
		\widetilde{\omega}-|\widetilde{\eta}|^{\ell-1}\widetilde{\eta}/2^{\ell-1} \leq \l[B_{\ell}+\frac{\beta(B_{\ell}-A_{\ell})}{2}\r] \widetilde{c}^{\ell-2} |\widetilde{\tau}|^2 \leq \l[4 B_{\ell}+ 2\beta(B_{\ell}-A_{\ell})\r] \widetilde{c}^{\ell-2} |\tau|^2.
	\end{align*}
	By the chosen of $N_{\ell}$ and the condition \eqref{E:Adjacent number}, there holds
	\begin{equation} \label{E:Quasi-orthogonal-8}
		A_{\ell}-2^{\ell} \ell(\ell-1)>0.
	\end{equation}
	Therefore, we can choose $\beta_{\ell}$ small enough such that
	\begin{equation} \label{E:Quasi-orthogonal-9}
		\frac{A_{\ell}}{9} \leq A_{\ell}- \frac{\beta_{\ell}(B_{\ell}-A_{\ell})}{2} - \frac{\ell (\ell-1)}{2^{\ell}} (3+2\beta_{\ell})^{\ell-2} (1+\beta_{\ell})^2,\quad 4 B_{\ell}+ 2\beta_{\ell} (B_{\ell}-A_{\ell}) \leq 5B_{\ell},
	\end{equation}
	which further deduces
	\[c(\bar{\tau})^{\ell-2} |\bar{\tau}|^2 \sim_{\ell} c(\bar{\tau}_1)^{\ell-2} |\bar{\tau}_1|^2\]
	and, moreover by \eqref{E:Quasi-orthogonal-7}, the relation $|\bar{\tau}| \sim_{\ell} |\bar{\tau}_1|$. This relation and the previous estimate \eqref{E:Quasi-orthogonal-7} imply the existence of our desired finite number $\widetilde{C}_{\ell}$ and complete the proof.
\end{proof}

\begin{remark}[Finite-cover property: inhomogeneous odd curve] \label{R:Geometric finite-cover}
	In our proof, the crucial fact is that the lower bound in the estimates \eqref{E:Quasi-orthogonal-9} is strictly greater than $0$, which gives the desired finite-cover property. Hence for the inhomogeneous odd curve $\xi^3+\xi^5$ case, one can similarly introduce the notations
	\[\omega:= \xi^3+\xi^5 + (\xi')^3 +(\xi')^5, \quad \eta := \xi + \xi',\]
	and imitate the linear function $G(\omega,\eta)$ in \eqref{E:Quasi-orthogonal-6.5} to define the following linear function
	\[G(\omega,\eta):= \frac{\omega-\frac{c(J)^3+ 3 c(J)^{2}[\eta-c(J)]}{4}- \frac{c(J)^5 + 5c(J)^4 [\eta-c(J)]}{16}}{[c(J) + c(J)^3]|J|^2/4}.\]
	Then by taking $N_5$ in \eqref{E:Adjacent number}, we conclude that this geometric finite-cover property also holds for $\xi^3+\xi^5$, since the lower bound in the estimates \eqref{E:Quasi-orthogonal-9} is still strictly greater than $0$ due to the following estimate
	\[\min\l\{\frac{A}{B},\frac{C}{D} \r\} \leq \frac{A+C}{B+D} \leq \max\l\{\frac{A}{B}, \frac{C}{D} \r\}, \quad \quad \forall (A,B,C,D) \in\bR_{+}^4.\]
\end{remark}
By this Remark \ref{R:Geometric finite-cover} and the quasi-orthogonal lemma of \cite[Lemma A.9]{KV2013}, see also \cite[Lemma 6.1]{TVV1998}, we directly obtain the following quasi-orthogonal consequence.
\begin{lemma}[Quasi-orthogonality] \label{L:Quasi-orthogonal}
	For any function $f\in L^2(\bR)$ with $\supp{f}\subset \bR_{-}$ or $\bR_{+}$, we have
	\[\l\|\sum_{\tau\sim \tau'} [\widetilde{E}_{5}]f_{\tau} [\widetilde{E}_{5}]f_{\tau'} \r\|_{L_{t,x}^3(\bR^2)}^{3/2} \lesssim \sum_{\tau\sim \tau'} \l\|[\widetilde{E}_{5}]f_{\tau} [\widetilde{E}_{5}]f_{\tau'} \r\|_{L_{t,x}^3(\bR^2)}^{3/2}.\]
\end{lemma}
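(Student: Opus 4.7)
The plan is to recognize this inequality as an immediate application of the abstract almost-orthogonality principle of Tao--Vargas--Vega (cf.\ \cite[Lemma~6.1]{TVV1998}) and Killip--Visan (cf.\ \cite[Lemma~A.9]{KV2013}): if a family $\{F_\alpha\}$ of functions on $\bR^2$ has space-time Fourier transforms supported in sets $\{R_\alpha\}$ whose mild dilates have bounded overlap, then for every $p\geq 2$ one has
\[\l\|\sum_\alpha F_\alpha\r\|_{L^p_{t,x}}^{p'}\lesssim \sum_\alpha \|F_\alpha\|_{L^p_{t,x}}^{p'},\qquad p'=\tfrac{p}{p-1}.\]
I would take $p=3$, so $p'=3/2$; index $\alpha$ by pairs $(\tau,\tau')$ of dyadic intervals with $\tau\sim \tau'$; and set $F_{\tau,\tau'}:=[\widetilde{E}_5]f_\tau\cdot [\widetilde{E}_5]f_{\tau'}$. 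The conclusion then reads exactly as Lemma \ref{L:Quasi-orthogonal}.

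First I would compute the Fourier support of each factor: directly from the definition of $[\widetilde{E}_5]$ one reads off that the space-time Fourier transform of $[\widetilde{E}_5]f_\tau$ is supported on the arc $\{(\xi,\,\xi^3+\xi^5):\xi\in\tau\}\subset\bR^2$. Hence the product $F_{\tau,\tau'}$ has Fourier transform supported in the Minkowski sumset
\[\widetilde{S}_5(\tau,\tau')=\l\{\l(\xi+\xi',\;\xi^3+\xi^5+(\xi')^3+(\xi')^5\r):\xi\in\tau,\;\xi'\in\tau'\r\}.\]
Next I would appeal to Remark \ref{R:Geometric finite-cover}, which, via the affine approximation $G(\omega,\eta)$ adapted to the curve $\omega=\eta^3+\eta^5$, supplies a parallelogram $R(\tau+\tau')\supset \widetilde{S}_5(\tau,\tau')$ and asserts the existence of $\beta_5>0$ together with a universal constant $\widetilde{C}_5\in\bN$ such that each dilate $(1+\beta_5)R(\tau+\tau')$ meets at most $\widetilde{C}_5$ other dilates $(1+\beta_5)R(\tau_1+\tau_1')$. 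This is precisely the bounded-overlap hypothesis needed in the TVV--KV lemma.

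With these two ingredients in hand, the result follows by direct invocation of the abstract lemma at $p=3$. The step I expect to require the most care is the geometric containment $\widetilde{S}_5(\tau,\tau')\subset R(\tau+\tau')$ together with uniform control of $\widetilde{C}_5$ over all admissible pairs $\tau\sim\tau'$; both points are already built into Remark \ref{R:Geometric finite-cover} by carrying over the estimates \eqref{E:Quasi-orthogonal-3}--\eqref{E:Quasi-orthogonal-9} from the homogeneous derivation of Lemma \ref{L:Geometric quasi-orthogonal} and using only that the resulting lower bound in \eqref{E:Quasi-orthogonal-9} remains strictly positive for the phase $\eta^3+\eta^5$. The Fourier-support hypothesis on $f$ (either $\supp\widehat{f}\subset \bR_+$ or $\subset \bR_-$) enters solely to ensure the parallelograms from Remark \ref{R:Geometric finite-cover} may be used on each half-line consistently; the two sign cases are symmetric via the identity $[\widetilde{E}_5]f(t,x)=[\widetilde{E}_5]\overline{f}(-t,-x)$ for Fourier-reflected $f$. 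Hence the proof reduces to a clean combination of the two cited inputs.
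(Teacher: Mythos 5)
Your proposal is correct and is exactly the paper's argument: the authors derive Lemma \ref{L:Quasi-orthogonal} by combining the finite-cover property of Remark \ref{R:Geometric finite-cover} (the inhomogeneous analogue of Lemma \ref{L:Geometric quasi-orthogonal}) with the abstract almost-orthogonality lemma of \cite[Lemma A.9]{KV2013} and \cite[Lemma 6.1]{TVV1998} at $p=3$, $p'=3/2$. Your identification of the space-time Fourier support of the bilinear pieces and of the role of the half-line hypothesis matches the intended reasoning.
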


The following bilinear estimate is classical by changing variables, the Hausdorff-Young inequality, and \holder inequality. Similar ideas can be seen in \cite[Lemma 2.1]{KPV1991-KDV} and \cite[Lemma 1.2]{Shao2009}. Here we show the details for the convenience of the reader.
\begin{lemma}[Bilinear estimate] \label{L:Bilinear Strichartz-3,5}
	Let $q\geq 2$. Then for all dyadic intervals $\tau\sim \tau'$ with either $\tau,\tau' \subset \bR_{+}$ or $\tau,\tau' \subset \bR_{-}$, and for any functions $u,v \in L^2(\bR)$ we have
	\begin{equation} \label{L:Bilinear Strichartz-3,5-1}
		\l\|[\widetilde{E}_{5}]u_{\tau} \cdot [\widetilde{E}_{5}] v_{\tau'}\r\|_{L_{t,x}^q(\bR^2)} \lesssim \l|c_{\tau} + c_{\tau}^3 \r|^{\frac{q-3}{3q}} |\tau|^{\frac{q-3}{q}} \|u\|_{L_x^2(\bR)} \|v\|_{L_x^2(\bR)}.
	\end{equation}
\end{lemma}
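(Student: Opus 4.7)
The plan is to establish the two endpoints $q=2$ and $q=\infty$ and then interpolate via log-convexity of $L^p$ norms, so the only real work is the $L^2$ estimate.

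For the $L^\infty$ endpoint I would apply the Cauchy-Schwarz inequality directly to
\[[\widetilde{E}_{5}]u_{\tau}(t,x) = \frac{1}{2\pi}\int_{\tau} |\xi+\xi^3|^{1/6} e^{ix\xi+it(\xi^3+\xi^5)} \widehat{u}(\xi)\, \ddd\xi,\]
which yields $\|[\widetilde{E}_{5}]u_{\tau}\|_{L^\infty_{t,x}} \lesssim |\tau|^{1/2}|c_\tau+c_\tau^3|^{1/6}\|u\|_{L^2}$ after replacing the weight by its supremum on $\tau$ (by Lemma \ref{L:Dyadic intervals} item \eqref{L:Dyadic intervals-1}). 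Since $|\tau|=|\tau'|$ and $|c_{\tau'}|\sim_{\ell}|c_\tau|$ by items \eqref{L:Dyadic intervals-1}--\eqref{L:Dyadic intervals-2}, taking products gives $\|[\widetilde{E}_{5}]u_\tau \cdot [\widetilde{E}_{5}]v_{\tau'}\|_{L^\infty_{t,x}} \lesssim |\tau|\,|c_\tau+c_\tau^3|^{1/3}\|u\|_{L^2}\|v\|_{L^2}$, matching the claimed estimate at $q=\infty$.

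For the $L^2$ endpoint I would write out the product as a double integral in $(\xi,\xi')\in\tau\times\tau'$ with phase $x(\xi+\xi')+t[(\xi^3+\xi^5)+((\xi')^3+(\xi')^5)]$, then change variables via
\[\Phi(\xi,\xi'):=\bigl(\xi+\xi',\;\xi^3+\xi^5+(\xi')^3+(\xi')^5\bigr),\]
whose Jacobian is $|J(\xi,\xi')|=|\xi-\xi'||\xi+\xi'|\bigl(3+5(\xi^2+(\xi')^2)\bigr)$. The key observation is that on $\tau\times\tau'$ with $\tau\sim\tau'$ lying in the same half-line, items \eqref{L:Dyadic intervals-3}--\eqref{L:Dyadic intervals-4} of Lemma \ref{L:Dyadic intervals} together with items \eqref{L:Dyadic intervals-1}--\eqref{L:Dyadic intervals-2} give $|\xi-\xi'|\sim |\tau|$, $|\xi+\xi'|\sim |c_\tau|$ and $\xi^2+(\xi')^2\sim c_\tau^2$, hence
\[|J(\xi,\xi')| \sim |\tau|\,|c_\tau|\,(1+c_\tau^2) \sim |\tau|\,|c_\tau+c_\tau^3|.\]
Moreover $\Phi$ is injective on $\tau\times\tau'$ because $\tau$ and $\tau'$ are disjoint so we can order $\xi<\xi'$ and $J$ never vanishes. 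After the change of variables, Plancherel in $(t,x)\leftrightarrow(\eta,\omega)$ produces
\[\|[\widetilde{E}_{5}]u_\tau\cdot[\widetilde{E}_{5}]v_{\tau'}\|_{L^2_{t,x}}^2 \;\lesssim\; \int_{\tau\times\tau'} \frac{|\xi+\xi^3|^{1/3}|\xi'+(\xi')^3|^{1/3}}{|J(\xi,\xi')|}\,|\widehat{u}(\xi)|^2|\widehat{v}(\xi')|^2\,\ddd\xi\,\ddd\xi',\]
and plugging in the Jacobian lower bound together with $|\xi+\xi^3|\lesssim |c_\tau+c_\tau^3|$ gives $\|[\widetilde{E}_{5}]u_\tau\cdot[\widetilde{E}_{5}]v_{\tau'}\|_{L^2_{t,x}} \lesssim |\tau|^{-1/2}|c_\tau+c_\tau^3|^{-1/6}\|u\|_{L^2}\|v\|_{L^2}$, i.e.\ precisely \eqref{L:Bilinear Strichartz-3,5-1} at $q=2$.

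Finally, for arbitrary $q\geq 2$ I would combine the two endpoints by the elementary log-convexity inequality $\|F\|_{L^q}\le \|F\|_{L^\infty}^{(q-2)/q}\|F\|_{L^2}^{2/q}$ applied to $F=[\widetilde{E}_5]u_\tau\cdot[\widetilde{E}_5]v_{\tau'}$; a bookkeeping of the exponents yields $|\tau|^{(q-3)/q}|c_\tau+c_\tau^3|^{(q-3)/(3q)}$ as required. The main technical obstacle is really just the Jacobian lower bound, and it is built in to Lemma \ref{L:Dyadic intervals}; the rest is standard Plancherel and Cauchy-Schwarz.
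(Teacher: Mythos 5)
Your proposal is correct and follows essentially the same route as the paper: a pointwise Cauchy--Schwarz bound for the $L^\infty$ endpoint, the change of variables $(\xi,\xi')\mapsto(\xi+\xi',\,\xi^3+\xi^5+(\xi')^3+(\xi')^5)$ combined with Plancherel for the $L^2$ endpoint (with the Jacobian comparable to $|\tau|\,|c_\tau+c_\tau^3|$ thanks to Lemma \ref{L:Dyadic intervals}), and interpolation between the two. The exponent bookkeeping checks out, so there is nothing to add.
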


\begin{proof}[\textbf{Proof of Lemma \ref{L:Bilinear Strichartz-3,5}}]
	By direct computation, the function $[E_{\ell}]u_{\tau} \cdot [E_{\ell}] v_{\tau'}$ equals to
	\[\frac{1}{4\pi^2} \int_{\bR} \int_{\bR} e^{ix(\xi+\xi') +it[\xi^{3}+ \xi^{5} +(\xi')^{3}+ (\xi')^{5}]} |\xi+\xi^3|^{\frac{1}{6}} |\xi'+ (\xi')^3|^{\frac{1}{6}} \widehat{u}_{\tau}(\xi) \widehat{v}_{\tau'}(\xi') \ddd \xi \ddd \xi'.\]
	Then we can directly obtain the following $L^\infty$ estimate
	\[\l\|[\widetilde{E}_{5}]u_{\tau} \cdot [\widetilde{E}_{5}] v_{\tau'}\r\|_{L_{t,x}^{\infty}} \lesssim \l|c_{\tau} + c_{\tau}^3 \r|^{\frac{1}{3}} |\tau| \|u\|_{L_x^2} \|v\|_{L_x^2};\]
	on the other hand, by changing of variables, the Plancherel theorem and then returning to the original variables, we obtain the following $L^2$ estimate
	\[\l\|[\widetilde{E}_{5}]u_{\tau} \cdot [\widetilde{E}_{5}] v_{\tau'}\r\|_{L_{t,x}^2} \lesssim \l|c_{\tau} + c_{\tau}^3 \r|^{-\frac{1}{6}} |\tau|^{-\frac{1}{2}} \|u\|_{L_x^2} \|v\|_{L_x^2}.\]
	Finally, we obtain the desired consequence \eqref{L:Bilinear Strichartz-3,5-1} by interpolation.
\end{proof}

With the bilinear estimate Lemma \ref{L:Bilinear Strichartz-3,5} and quasi-orthogonal Lemma \ref{L:Quasi-orthogonal} in place, now we are ready to state our final result in this subsection, the refined Strichartz estimate Proposition \ref{P:Refined Strichartz}. This consequence follows from a standard Whitney-type decomposition argument. We omit its proof to avoid too much repetition. Details can be seen, for instance, in the proofs of \cite[Proposition 4.24]{KV2013} and \cite[Lemma 3.7]{FS2018}.
\begin{prop}[Refined Strichartz: inhomogeneous curve] \label{P:Refined Strichartz}
	There exists $\theta\in (0,1)$ such that for all $f\in L^2(\bR)$ we have
	\begin{equation*}
		\l\|[\widetilde{E}_{5}] f \r\|_{L_{t,x}^6(\bR^2)} \lesssim \l[\sup_{\tau\in\mathcal{D}} |c_{\tau} + c_{\tau}^3|^{-\frac{1}{6}} |\tau|^{-\frac{1}{2}} \l\|[\widetilde{E}_{5}] f_{\tau}\r\|_{L_{t,x}^{\infty}(\bR^2)} \r]^{\theta} \|f\|_{L_x^2(\bR)}^{1-\theta}.
	\end{equation*}
\end{prop}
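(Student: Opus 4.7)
The plan is to follow the standard Tao--Vargas--Vega refinement template, as in \cite{KV2013,FS2018}. First I would split $f = [P_{+}]f + [P_{-}]f$ using the positive/negative frequency projection, so that by the triangle inequality in $L_{t,x}^{6}$ it suffices to prove the estimate under the assumption $\supp \widehat{f} \subset \bR_{+}$ (the case $\bR_{-}$ being symmetric). The point is that both the quasi-orthogonality Lemma \ref{L:Quasi-orthogonal} and the bilinear estimate Lemma \ref{L:Bilinear Strichartz-3,5} require the Fourier supports of the interacting pieces to lie on the same half-line. I would then use the identity $\|[\widetilde{E}_{5}]f\|_{L_{t,x}^{6}}^{2} = \|([\widetilde{E}_{5}]f)^{2}\|_{L_{t,x}^{3}}$ to avoid taking complex conjugates, which keeps the subsequent bilinear analysis compatible with the Fourier-support geometry used in Lemma \ref{L:Geometric quasi-orthogonal} and Remark \ref{R:Geometric finite-cover}.

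Next, I would perform the Whitney decomposition of $\bR_{+}\times\bR_{+}$ off its diagonal into almost-disjoint products $\tau\times\tau'$ with $\tau\sim\tau'$, giving the bilinear expansion
\[([\widetilde{E}_{5}]f)^{2}=\sum_{\tau\sim\tau'}[\widetilde{E}_{5}]f_{\tau}\cdot[\widetilde{E}_{5}]f_{\tau'}\]
(the diagonal contribution being negligible in the usual way). Invoking Lemma \ref{L:Quasi-orthogonal}, which rests on the finite-cover property of Remark \ref{R:Geometric finite-cover}, one obtains
\[\l\|[\widetilde{E}_{5}]f\r\|_{L_{t,x}^{6}}^{6} \lesssim \l(\sum_{\tau\sim\tau'}\l\|[\widetilde{E}_{5}]f_{\tau}\cdot[\widetilde{E}_{5}]f_{\tau'}\r\|_{L_{t,x}^{3}}^{3/2}\r)^{2}.\]
For each Whitney pair I would use the elementary interpolation $\|FG\|_{L_{t,x}^{3}}^{3}\le\|FG\|_{L_{t,x}^{\infty}}\|FG\|_{L_{t,x}^{2}}^{2}$, together with the obvious $\|FG\|_{L_{t,x}^{\infty}}\le\|F\|_{L_{t,x}^{\infty}}\|G\|_{L_{t,x}^{\infty}}$ and Lemma \ref{L:Bilinear Strichartz-3,5} at the endpoint $q=2$, to obtain
\[\l\|[\widetilde{E}_{5}]f_{\tau}\cdot[\widetilde{E}_{5}]f_{\tau'}\r\|_{L_{t,x}^{3}}^{3/2}\lesssim \l\|[\widetilde{E}_{5}]f_{\tau}\r\|_{L_{t,x}^{\infty}}^{1/2}\l\|[\widetilde{E}_{5}]f_{\tau'}\r\|_{L_{t,x}^{\infty}}^{1/2}|c_{\tau}+c_{\tau}^{3}|^{-1/6}|\tau|^{-1/2}\|f_{\tau}\|_{L_{x}^{2}}\|f_{\tau'}\|_{L_{x}^{2}}.\]

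Using $c_{\tau}\sim c_{\tau'}$ and $|\tau|=|\tau'|$ from Lemma \ref{L:Dyadic intervals} to factor the weight symmetrically, the right-hand side is controlled by $A_{\tau}^{1/2}A_{\tau'}^{1/2}\|f_{\tau}\|_{L_{x}^{2}}\|f_{\tau'}\|_{L_{x}^{2}}$, where
\[A_{\tau}:= |c_{\tau}+c_{\tau}^{3}|^{-1/6}|\tau|^{-1/2}\l\|[\widetilde{E}_{5}]f_{\tau}\r\|_{L_{t,x}^{\infty}}.\]
Pulling out $\sup_{\tau\in\mathcal{D}}A_{\tau}$, applying Cauchy--Schwarz using the bounded combinatorial degree of the relation $\tau\sim\tau'$, and invoking the Plancherel orthogonality $\sum_{\tau}\|f_{\tau}\|_{L_{x}^{2}}^{2} = \|f\|_{L_{x}^{2}}^{2}$, one arrives at
\[\l\|[\widetilde{E}_{5}]f\r\|_{L_{t,x}^{6}}^{6}\lesssim\l(\sup_{\tau\in\mathcal{D}}A_{\tau}\r)^{2}\|f\|_{L_{x}^{2}}^{4},\]
which is the claimed refined Strichartz with $\theta=1/3$. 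The main obstacle in adapting the Tao--Vargas--Vega template to our setting is the quasi-orthogonality step, since the naive finite-cover property can fail once scaling is broken; this obstruction has been absorbed into the inhomogeneous extension of Lemma \ref{L:Geometric quasi-orthogonal} proved in Remark \ref{R:Geometric finite-cover}, after which every remaining step is routine, which is why the authors elect to omit a detailed proof of this proposition.
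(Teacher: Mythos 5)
Your outline follows the intended Tao--Vargas--Vega template (the paper itself omits the proof and defers to \cite[Proposition 4.24]{KV2013} and \cite[Lemma 3.7]{FS2018}), and everything through the per-pair bound
\[
\l\|[\widetilde{E}_{5}]f_{\tau}\cdot[\widetilde{E}_{5}]f_{\tau'}\r\|_{L_{t,x}^{3}}^{3/2}\lesssim A_{\tau}^{1/2}A_{\tau'}^{1/2}\|f_{\tau}\|_{L_x^{2}}\|f_{\tau'}\|_{L_x^{2}}
\]
is correct. The gap is in the final summation. The identity $\sum_{\tau}\|f_{\tau}\|_{L_x^{2}}^{2}=\|f\|_{L_x^{2}}^{2}$ holds only over the dyadic intervals of a \emph{fixed} length; the Whitney family ranges over all scales, and over all scales the left-hand side is infinite. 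Concretely, after you pull out $\sup_{\tau}A_{\tau}$ you must bound $\sum_{\tau\sim\tau'}\|f_{\tau}\|_{L_x^{2}}\|f_{\tau'}\|_{L_x^{2}}$, and already for $\widehat{f}=\mathds{1}_{[0,1]}$ this diverges: at each scale $2^{-j}$ with $j\geq N_{5}$ there are about $2^{j}$ Whitney pairs inside $[0,1]^{2}$, each contributing $2^{-j/2}\cdot 2^{-j/2}=2^{-j}$, so every scale contributes a fixed amount and the sum over scales is infinite. Cauchy--Schwarz together with the bounded degree of the relation $\sim$ does not rescue this, since it reduces the pair sum to exactly the divergent single sum $\sum_{\tau}\|f_{\tau}\|_{L_x^{2}}^{2}$ taken over all scales. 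So the claimed conclusion $\|[\widetilde{E}_{5}]f\|_{L_{t,x}^{6}}^{6}\lesssim(\sup_{\tau}A_{\tau})^{2}\|f\|_{L_x^{2}}^{4}$ does not follow from the displayed estimates.

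This summation across scales is precisely the non-routine point of the argument, and it is the part your write-up dismisses as "routine." One cannot discard all of the $A_{\tau}$-dependence into a single supremum with total exponent $2$; some per-pair decay must be retained so that the sum over scales converges. The cited proofs accomplish this by combining the crude bound $A_{\tau}\lesssim\|f_{\tau}\|_{L_x^{2}}$ (from $\|[\widetilde{E}_{5}]f_{\tau}\|_{L_{t,x}^{\infty}}\lesssim|c_{\tau}+c_{\tau}^{3}|^{1/6}|\tau|^{1/2}\|f_{\tau}\|_{L_x^{2}}$, by Cauchy--Schwarz on the Fourier side) with a bookkeeping of the Whitney pairs that exploits the fact that a given pair of frequency blocks is paired at essentially one scale; carrying this out is what determines the (unspecified) exponent $\theta\in(0,1)$ in the statement, and your clean value $\theta=1/3$ is an artifact of the invalid step. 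As written, your argument establishes the correct per-pair estimates but not the Proposition.
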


\subsection{Profile decomposition} \label{SS:Profile decomposition-3,5}
To establish the $\xi^3+\xi^5$ version profile decomposition, our strategy is the following: first establishing the $|\xi|^3+|\xi|^5$ version profile decomposition, and then establishing the desired $\xi^3+\xi^5$ version profile decomposition. For the first step, we use the tools established in the previous two subsections; for the second step, we use the divide-and-regroup arguments developed in the previous section.

The classical profile decomposition consequences, with only one profile, are well-used and well-understood in the literature. Indeed, it is a standard argument that applying the approximate local convergence Lemma \ref{L:Approximate local convergence}, the Br\'ezis-Lieb type lemma \cite[Theorem 1.9]{LL2001}, and the refined Strichartz estimate Proposition \ref{P:Refined Strichartz} to establish the corresponding $|\xi|^3+|\xi|^5$ version profile decomposition. Hence, for simplicity, the arguments in this subsection will be sketchy, and the readers are referred to the classical literature \cite{BG1999,BV2007,CK2007,Keraani2001,KV2013} for further details.

At first, we introduce the associated even curve $|\xi|^3+|\xi|^5$ extension operator
\[[\widetilde{E}_{*,5}]f(x):=\frac{1}{2\pi} \int_{\bR} |\xi + \xi^3|^{\frac{1}{6}} e^{ix\xi+it(|\xi|^3+|\xi|^5)} \widehat{f}(\xi) \ddd \xi,\]
and for parameters $(h_n^j, x_n^j, \xi_n^j, t_n^j)$ we define the corresponding profile operator
\begin{equation} \label{E:Profile operator-3,5-e}
	[\widetilde{T}_{*,n}^j]f(x):= \l[e^{it(|\nabla|^3+|\nabla|^5)}\r] \l((h_n^j)^{-\frac{1}{2}} e^{i(x-x_n^j)\xi_n^j} \phi_1 \l(\frac{x-x_n^j}{h_n^j}\r)\r).
\end{equation}

\begin{definition}[Orthogonal parameters: inhomogeneous even curve]
	We say the parameters $(h_n^j, x_n^j, \xi_n^j, t_n^j)$ are \textit{inhomogeneous even pairwise orthogonal} if for arbitrary fixed $j\neq k$, these parameters satisfy one of the following:
	\begin{itemize}
		\item $(h_n^j,\xi_n^j) \neq (h_n^k,\xi_n^k)$ with
		\[\lim_{n\to \infty}\l(\frac{h_n^j}{h_n^k} + \frac{h_n^k}{h_n^j} + (h_n^j+h_n^k) |\xi_n^j-\xi_n^k| \r) = \infty;\]
		\item $(h_n^j,\xi_n^j) \equiv (h_n^k,\xi_n^k) \equiv (h_n, \xi_n)$ with $|h_n\xi_n| \to \infty$ and
		\[\lim_{n\to\infty} \l(\l|\frac{x_n^j-x_n^k}{h_n} -\frac{3(t_n^j-t_n^k) (h_n \xi_n)^{2}}{(h_n)^{3}} - \frac{5(t_n^j-t_n^k) (h_n \xi_n)^{4}}{(h_n)^{5}}\r| + \l|\frac{(t_n^j-t_n^k) (h_n \xi_n)^{3}}{(h_n)^{5}}\r| \r)= \infty;\]
		\item $(h_n^j,\xi_n^j) \equiv (h_n^k,\xi_n^k) \equiv (h_n, \xi_n)$ with $\xi_n \equiv 0$ and
		\[\lim_{n\to\infty} \l(\l|\frac{x_n^j-x_n^k}{h_n} \r| + \l|\frac{t_n^j-t_n^k}{(h_n)^{3}}\r| + \l|\frac{t_n^j-t_n^k}{(h_n)^{5}}\r|\r)= \infty.\]
	\end{itemize}
\end{definition}

\begin{thm}[Profile decomposition: inhomogeneous even curve]
	Let $(f_n)$ be a bounded sequence in $L^2(\bR)$. Then, up to subsequences, there exist a sequence of operators $([\widetilde{T}_{*,n}^j])$ defined by \eqref{E:Profile operator-3,5-e} with inhomogeneous even pairwise orthogonal parameters $(h_n^j, x_n^j, \xi_n^j, t_n^j)$ and a sequence of functions $(\phi^j) \subset L^2(\bR)$ such that for every integer $J\in \bN$, we have the profile decomposition
	\begin{equation*}
		f_n=\sum_{j=1}^{J} [\widetilde{T}_{*,n}^j]\phi^j +\omega_n^{J},
	\end{equation*}
	where this decomposition possesses the following properties: firstly, the remainder term $\omega_n^{J}$ has vanishing Strichartz norm
	\begin{equation*}
		\lim_{J\to\infty}\limsup_{n\to\infty}\l\|[\widetilde{E}_{*,5}] \omega_n^{J} \r\|_{L_{t,x}^{6}}=0;
	\end{equation*}
	secondly, for each $j\leq J$, there holds the following $L_x^2$ weak convergence
	\begin{equation*}
		[\widetilde{T}_{*,n}^j]^{-1} \omega_n^J \rightharpoonup 0 \quad \text{as} \quad n\to \infty;
	\end{equation*}
	thirdly, for each $J\geq1$, there holds the $L^2$-limit orthogonality
	\begin{equation*}
		\lim_{n\to\infty}\l[\|f_n\|_{L_x^2}^2-\l(\sum_{j=1}^{J} \l\|[\widetilde{T}_{*,n}^j]\phi^j \r\|_{L_x^2}^2\r)-\|\omega_n^{J}\|_{L_x^2}^2\r]=0;
	\end{equation*}
	moreover, for arbitrary $j\neq k$, there holds
	\begin{equation*}
		\lim_{n\to\infty} \l\|[\widetilde{E}_{*,5}] [\widetilde{T}_{*,n}^j]\phi^j \cdot [\widetilde{E}_{*,5}] [\widetilde{T}_{*,n}^k]\phi^k \r\|_{L_{t,x}^3}=0,
	\end{equation*}
	which further implies that, for each $J\geq 1$, there holds the Strichartz-limit orthogonality
	\begin{equation*}
        \limsup_{n\to\infty}\l(\l\|\sum_{j=1}^J[\widetilde{E}_{*,5}] [\widetilde{T}_{*,n}^j]\phi^j\r\|_{L_{t,x}^6}^{6} -\sum_{j=1}^J\l\|[\widetilde{E}_{*,5}] [\widetilde{T}_{*,n}^j]\phi^j\r\|_{L_{t,x}^6}^{6}\r)=0.
	\end{equation*}
\end{thm}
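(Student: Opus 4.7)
The proof follows the standard iterative template for profile decomposition in the scaling-broken setting, cf.\ \cite{BV2007,Keraani2001,KV2013}: one extracts a single profile at a time via a refined Strichartz lower bound combined with weak compactness, subtracts, and iterates, relying on Pythagorean mass orthogonality to force termination. The plan parallels the even-curve Proposition \ref{P:Profile decomposition-even} and uses the tools established in Subsections \ref{SS:Approximate symmetries}--\ref{SS:Refined Strichartz}, specialized to the operator $[\widetilde{E}_{*,5}]$.

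First I would transfer the ingredients from the odd operator $[\widetilde{E}_5]$ to the even operator $[\widetilde{E}_{*,5}]$. Since $|\xi|^3 + |\xi|^5$ agrees with $\pm(\xi^3+\xi^5)$ on $\bR_\pm$, the Fourier-splitting $f = [P_+]f + [P_-]f$ reduces each ingredient to a half-line computation of odd type: the bilinear estimate Lemma \ref{L:Bilinear Strichartz-3,5}, the geometric finite-cover property Lemma \ref{L:Geometric quasi-orthogonal} together with its extension in Remark \ref{R:Geometric finite-cover}, the quasi-orthogonality Lemma \ref{L:Quasi-orthogonal}, and the approximate local convergence Lemma \ref{L:Approximate local convergence} all carry over. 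Assembled, they yield the even analogue of Proposition \ref{P:Refined Strichartz},
$$\|[\widetilde{E}_{*,5}] f\|_{L^6_{t,x}(\bR^2)} \lesssim \Bigl[\sup_{\tau \in \mathcal{D}} |c_\tau + c_\tau^3|^{-\frac{1}{6}} |\tau|^{-\frac{1}{2}} \|[\widetilde{E}_{*,5}] f_\tau\|_{L^\infty_{t,x}(\bR^2)}\Bigr]^\theta \|f\|_{L^2_x(\bR)}^{1-\theta}.$$

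For the extraction, assuming $\limsup_n \|[\widetilde{E}_{*,5}] f_n\|_{L^6_{t,x}} = \varepsilon > 0$, the refined bound furnishes, along a subsequence, dyadic intervals $\tau_n$ and points $(t_n, x_n) \in \bR^2$ realizing the supremum up to a fixed factor. Setting $\xi_n^1 := c_{\tau_n}$, $h_n^1 := |\tau_n|^{-1}$ and $(x_n^1, t_n^1) := (x_n, t_n)$, passing to further subsequences places $(h_n^1, \xi_n^1)$ in one of the three regimes ($\xi_n^1\equiv 0$ or $h_n^1\xi_n^1\to\pm\infty$). Renormalize by $g_n := [\widetilde{T}_{*,n}^1]^{-1} f_n$ and extract a weak-$L^2$ subsequential limit $\phi^1$. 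The $L^\infty$ lower bound on $[\widetilde{E}_{*,5}](f_n)_{\tau_n}(t_n,x_n)$, translated through the renormalization and combined with the asymptotic-\schrodinger reduction Lemma \ref{L:Asymptotic Schrodinger-3,5} together with the approximate local convergence Lemma \ref{L:Approximate local convergence}, forces $\|\phi^1\|_{L^2} \gtrsim \varepsilon^{C} \limsup_n\|f_n\|_{L^2}^{-C'}$ for explicit $C,C'>0$. Set $\omega_n^1 := f_n - [\widetilde{T}_{*,n}^1]\phi^1$; the isometry of $[\widetilde{T}_{*,n}^1]$ and the Br\'ezis-Lieb lemma give $\|f_n\|_{L^2}^2 - \|\phi^1\|_{L^2}^2 - \|\omega_n^1\|_{L^2}^2 \to 0$. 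Iterating produces profiles $\phi^j$, parameters $(h_n^j,x_n^j,\xi_n^j,t_n^j)$ and remainders $\omega_n^J$; the inhomogeneous-even orthogonality is forced automatically, since nonorthogonal parameters would yield via Lemma \ref{L:Approximate local convergence} a strong-operator limit $[\widetilde{T}_{*,n}^k]^{-1}[\widetilde{T}_{*,n}^j] \to [U]$ with $[U]$ unitary, contradicting the weak-nonzero extraction at step $j$. Summability $\sum_j \|\phi^j\|_{L^2}^2 \leq \limsup\|f_n\|_{L^2}^2$ forces $\|[\widetilde{E}_{*,5}]\omega_n^J\|_{L^6_{t,x}}\to 0$ as $J\to\infty$ (else another profile of definite mass could be extracted). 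Finally, the bilinear Lemma \ref{L:Bilinear Strichartz-3,5} together with parameter orthogonality yields the Strichartz-limit orthogonality.

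The main obstacle is the stationary-phase bookkeeping that produces the exact form of the orthogonality conditions: Taylor-expanding $t(|\xi|^3+|\xi|^5)$ around $\xi_n$ and rescaling by $h_n$, the first-order term produces the combined group velocity $3\xi_n^2 + 5\xi_n^4$, yielding (after multiplication by $(t_n^j-t_n^k)$ and rescaling) the first bracketed expression in the dichotomy; the higher-order part of the phase, retained under the renormalization precisely when $|h_n\xi_n|\to\infty$, contributes the $(t_n^j-t_n^k)(h_n\xi_n)^3/h_n^5$ term. These specific algebraic combinations must appear exactly as stated to obtain the clean dichotomy between $[\widetilde{T}_{*,n}^k]^{-1}[\widetilde{T}_{*,n}^j] \rightharpoonup 0$ and strong convergence to a unitary, the rest of the argument being routine and paralleling the classical Schr\"odinger treatment.
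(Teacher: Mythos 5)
Your proposal is correct and follows essentially the same route the paper intends: the paper itself only sketches this theorem, stating that it follows from the standard iterative extraction argument of \cite{BG1999,BV2007,CK2007,Keraani2001,KV2013} applied with the approximate local convergence Lemma \ref{L:Approximate local convergence}, the Br\'ezis--Lieb lemma, and the refined Strichartz estimate Proposition \ref{P:Refined Strichartz}, which are exactly the ingredients you assemble (including the half-line reduction transferring the odd-operator estimates to $[\widetilde{E}_{*,5}]$).
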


Finally, by following the divide-and-regroup argument shown in Section \ref{S:Homogeneous odd curve}, one can similarly obtain the desired inhomogeneous odd curve version profile decomposition Theorem \ref{T:Profile decomposition-3,5}.

\subsection*{Acknowledgments}
The authors would like to thank Ryan Frier and Shuanglin Shao for their valuable conversations. The first author acknowledges the support from the University of Chinese Academy of Sciences Joint Training Program. Part of this work was completed during the first author's visit to the University of Kansas whose hospitality is also appreciated.

\bigskip\bigskip
\footnotesize

\bigskip\bigskip
\begin{flushleft}
	\vspace{0.3cm}\textsc{Boning Di \\
		School of Mathematical Sciences, University of Chinese Academy of Sciences, Beijing, 100049, People's Republic of China \\
		Academy of Mathematics and Systems Science, Chinese Academy of Sciences, Beijing, 100190, People's Republic of China} \\
		\emph{E-mail address}: \textsf{diboning18@mails.ucas.ac.cn}
		
	\vspace{0.3cm}\textsc{Chenjie Fan \\
		Academy of Mathematics and Systems Science, Chinese Academy of Sciences, Beijing, 100190, People's Republic of China \\
		Hua Loo-Keng Key Laboratory of Mathematics, Chinese Academy of Sciences, Beijing, 100190, People's Republic of China} \\
		\emph{E-mail address}: \textsf{cjfanpku@gmail.com}
	
	\vspace{0.3cm}\textsc{Dunyan Yan\\
		School of Mathematical Sciences, University of Chinese Academy of Sciences, Beijing, 100049, People's Republic of China} \\
		\emph{E-mail address}: \textsf{ydunyan@ucas.ac.cn}
\end{flushleft}

\end{document}